\newcommand{\Real}{\mathbb{R}}                              
\newcommand{\set}[1]{\left\{#1\right\}}                     
\newcommand{\abs}[1]{\left|#1\right|}                       
\newcommand{\bra}[1]{\left(#1\right)}                       
\newcommand{\eqq}{:=}                                       
\newcommand {\beq}{\begin{equation}}
\newcommand {\eeq}{\end{equation}}
\newcommand {\beqn}{\begin{equation*}}
\newcommand {\eeqn}{\end{equation*}}
\newcommand {\bear}{\begin{eqnarray}}
\newcommand {\eear}{\end{eqnarray}}
\newcommand {\bearn}{\begin{eqnarray*}}
\newcommand {\eearn}{\end{eqnarray*}}
\newcommand{\norm}[1]{\left\lVert#1\right\rVert}
\newcommand{\Z}{\mathbb{Z}}
\newcommand{\Realp}{\mathbb{R}_{+}}
\newcommand {\Realn}{\mathbb{R}^n}
\newcommand {\Realnone}{\mathbb{R}^{n+1}}
\DeclareMathOperator{\conv}{conv}
\DeclareMathOperator{\aff}{aff}
\DeclareMathOperator{\bd}{bd}
\DeclareMathOperator{\cconv}{\overline{conv}}
\DeclareMathOperator{\Int}{int}
\DeclareMathOperator{\lin}{lin}
\DeclareMathOperator{\epi}{epi}
\DeclareMathOperator{\hyp}{hyp}
\DeclareMathOperator{\gr}{gr}
\DeclareMathOperator{\Max}{max}
\newcommand {\phat}{\hat{\pi}}
\newcommand {\xbar}{\overline{x}}
\newcommand {\xstar}{x^*}
\newcommand {\zstar}{z^*}
\newcommand {\ystar}{y^*}
\newcommand {\tstar}{t^*}
\newcommand {\tbar}{\overline{t}}
\newcommand{\norms}[1]{\left\|#1\right\|_2}
\newcommand{\normss}[1]{\left\|#1\right\|_2^2}
\newcommand{\mybar}[1]{\overline{#1}}
\newcommand {\bcom}{}
\newtheorem{corollary}{Corollary}[section]
\newtheorem{lemma}{Lemma}[section]
\theoremstyle{definition}
\newtheorem{example}{Example}[section]
\newtheorem{definition}{Definition}[section]
\newtheorem{proposition}{Proposition}[section]
\begin{document}

\title{Intersection cuts for nonlinear integer programming: convexification techniques for structured sets
}


\author{Sina Modaresi\thanks{Department of Industrial Engineering, University of Pittsburgh, Pittsburgh, PA 15261, {\tt sim23@pitt.edu}}         \and
        Mustafa R. K\i l\i n\c{c}\thanks{Department of Chemical Engineering, Carnegie Mellon University, Pittsburgh, PA 15213, {\tt mkilinc@andrew.cmu.edu}} \and
        Juan Pablo Vielma \thanks{Sloan School of Management, Massachusetts Institute of Technology, Cambridge, MA 02139, {\tt jvielma@mit.edu}}
}



\maketitle

\begin{abstract}
We study the generalization of split, k-branch split, and intersection cuts from Mixed Integer Linear Programming to the realm of Mixed Integer Nonlinear Programming. 
Constructing such cuts requires calculating the convex hull of the difference between a convex set and an open set with a simple geometric structure. We introduce
two techniques to give precise characterizations of such convex hulls and use them to construct split, k-branch split, and intersection cuts for several classes of non-polyhedral sets. 
In particular, we give simple formulas for split cuts for essentially all convex sets described by a single quadratic inequality. We also give simple formulas for k-branch split cuts and some general intersection cuts for a wide variety of convex quadratic sets.
\end{abstract}


\section{Introduction}\label{intro}


An important area of Mixed Integer Linear Programming (MILP) is the characterization of the convex hull of specially structured non-convex polyhedral sets to develop strong valid inequalities or cutting planes such as split and intersection cuts  \cite{conforti2010polyhedral,conforti2011corner,cornuejols2008valid,del2012relaxations}. This approach has led to highly effective branch-and-cut algorithms  \cite{DBLP:journals/mpc/Achterberg09,DBLP:journals/anor/BixbyR07,bixby2004,DBLP:journals/informs/JohnsonNS00,Lodi2009}, so there has  recently been significant interest in extending the associated theoretical and computational results to the realm of Mixed Integer Nonlinear Programming (MINLP) \cite{Atamturk07,AN:conicmir,Goez2012,dan,DBLP:conf/ipco/Bonami11,DBLP:journals/mp/CezikI05,DBLP:journals/mor/DadushDV11,DBLP:conf/ipco/DadushDV11,Dadush2011121,Drewes,mustafa,springerlink:10.1007/s101070050103}. Unfortunately, this extension requires the study of the convex hull of a non-convex and non-polyhedral set, which has proven to be significantly harder than the polyhedral case. Most of the known results in this area are limited to very specific sets \cite{horst2003global,sherali1998reformulation,tawarmalani2002convexification} or to approximations of semi-algebraic sets through Semidefinite Programming (SDP) \cite{fujie1997semidefinite,lasserre2001global,nesterov,oustry2001sdp,parrilo2003semidefinite,polik2007survey,poljak1995recipe}. While some precise SDP representations of the convex hulls of semi-algebraic sets exist \cite{Gouveia12,Helton12,henrion2011semidefinite,Scheiderer20112606}, these require the use of auxiliary variables. Such higher dimensional, extended, or lifted representations are extremely powerful. However, there are theoretical and computational reasons to want representations in the original space and/or in the same class as the original set (e.g. representations that do not jump from quadratic basic semi-algebraic to SDP). We refer to characterizations that satisfy both these requirements as \emph{projected} and  \emph{class preserving}. Projected and class preserving are in general incompatible (e.g. the convex hull of the basic semi-algebraic set $\set{x\in \Real^2\,:\, (x_1^2-x_2)x_1\geq 0,\, x_2\geq 0}$ has no projected basic semi-algebraic representation, but has a lifted basic semi-algebraic representation \cite{lecnote}). Furthermore, even giving an algebraic characterization of the boundary of the convex hull of a variety \cite{ranestad2011convex,ranestad2009convex} or giving a projected SDP representation of the convex hull of certain varieties and quadratic semi-algebraic sets \cite{sanyal2011orbitopes,yildiran2009convex,Kose}  requires very complex techniques from algebraic geometry. All such issues make extending MILP cutting planes to the MINLP setting extremely challenging. To alleviate such challenges, we concentrate on the extension of split cuts, k-branch split cuts, and other intersection cuts to the MINLP setting \cite{balas1971intersection,DBLP:journals/mp/CookKS90,dash2012two,Gomory1969451,springerlink:10.1007/BF01584976,li2008cook}. 

Split, k-branch split, and intersection cuts for MILP can all be obtained by taking the convex hull of the difference between a convex set and a  set with a simple geometric structure.  This characterization allows for a straightforward extension of the cuts to the MINLP setting. However, this conceptual extension does not provide a practical construction procedure for the cuts. For this reason, we follow the approach of  the simple, but extremely powerful Mixed Integer Rounding (MIR) cut \cite{Marchand,DBLP:books/daglib/0090563,springerlink:10.1007/BF01585752,Wolsey}. The MIR procedure can be used to generate every split cut for a MILP and, together with the closely related Gomory Mixed Integer (GMI) cut procedure \cite{Gomory1969451,springerlink:10.1007/BF01584976}, yields the most effective cutting plane approach for general MILP \cite{DBLP:journals/anor/BixbyR07,bixby2004}. In particular, one version of the MIR procedure shows that every split cut can be constructed through a simple two step procedure. The first step is the construction of a canonical cut known as the \emph{simple} or \emph{basic} MIR. This cut is obtained by taking the convex hull of the difference between two simple convex sets in $\Real^2$, both of which are described by two linear inequalities. The second step simply uses linear transformations to obtain all split cuts from the basic MIR.  In this paper we show that a similar approach can be used to construct a wide range of intersection cuts. More specifically, we show how two very simple techniques can be used to construct projected class preserving characterizations of the convex hull of difference between certain canonical sets. The techniques we consider are only tailored to the geometric structure of these canonical sets  and do not require the sets to have any additional  algebraic properties (e.g. being quadratic, basic semi-algebraic, etc.). Thanks to this, the resulting characterizations are quite general, but give simple closed form expressions. While the canonical sets are somewhat specific, we can also use affine transformations to obtain more general cuts. In particular, these techniques can be used to construct split cuts for essentially all convex sets described by a single conic quadratic inequality, and to extend k-branch split and general intersection cuts to a wide variety of quadratic sets of interest to trust region and lattice problems. In both cases, the only algebraic property of the quadratic sets needed for the construction is the symmetry of the Euclidean norm. This suggests that the techniques could be useful to construct cuts for additional classes of sets by only exploiting similar basic properties.

The rest of this paper is organized as follows. We begin with Section~\ref{LitReview} where we introduce some notation and review some known results. Section~\ref{NSCSec} then introduces an interpolation technique that can be used to construct split and k-branch split cuts for many classes of sets. Then, in Section~\ref{intquadsec} we use the interpolation technique to characterize intersection cuts for conic quadratic sets. Finally,  Section~\ref{intersection_cut_section}  introduces an aggregation technique that can be used to construct a wide array of general intersection cuts. In both Sections~\ref{NSCSec} and \ref{intersection_cut_section}, we first present the basic principles behind the techniques in a simple, but abstract setting, and then utilize them to construct more specific cuts to illustrate their power and limitations.  

\section{Notation, known results and other preliminaries} \label{LitReview}

We use the following  notation. Let $e^i\in\mathbb{R}^n$ be the $i$-th unit vector, $0_n \in \Real^n$ be the zero vector, and $I\in \mathbb{R}^{n\times n}$ be   the identity matrix where $n$ is an appropriate dimension that we omit if evident from the context.  We also let $\norm{x}_2:=\sqrt{\sum_{i=1}^n x_i^2}$ denote the Euclidean norm of a given vector $x\in \mathbb{R}^n$ and for a vector $v \in \Real^n$, we let the projection onto its span be $P_{v} := \frac{v v^T}{\|v \|_2^2}$ and  onto its  orthogonal complement be $P_{v}^ {\perp} := I - \frac{v v^T}{\|v \|_2^2}$. We also let $\set{\pi_i}_{i=1}^k \subseteq \Real^n\setminus\set{0_n}$ be an arbitrary set of vectors, and not necessarily a sequence of vectors. For a set $S\subseteq \Real^n$, we let $\Int\bra{S}$ be its interior, $\bd\bra{S}$ be its boundary, $\conv\bra{S}$ be its convex hull, $\cconv\bra{S}$ be the closure of its convex hull, $\aff\bra{S}$ be its affine hull, and $\lin\bra{S} \eqq \set{d \in \Realn \,:\, x + \lambda d \in S \mbox{~for all~} x \in S \mbox{~and~} \lambda \in \Real}$ be its lineality space. For a function $G:\mathbb{R}^n\to \mathbb{R}$  we let $\epi\bra{G}:=\set{\bra{x,t}\in \Realnone\,:\, G(x)\leq t}$ be its epigraph, $\gr\bra{G}:=\set{\bra{x,t}\in \Realnone\,:\, G(x)= t}$ be its graph, and $\hyp\bra{G}:=\set{\bra{x,t}\in \Realnone\,:\, G(x)\geq t}$ be its hypograph. In addition, we let $[n]:=\set{1,\ldots,n}$.

\begin{definition}[Intersection, Split, k-branch Split, and t-inclusive Split Cuts] \label{firstdef} Let $B\subseteq\mathbb{R}^{n}$ be a closed convex set that we refer to as the \emph{base set}, $F\subseteq\mathbb{R}^{n}$ be a closed set that we refer to as the \emph{forbidden set}, and $g:\Real^{n}\to\Real$ be an arbitrary function.
We say inequality $g(x)\leq 0$ is an \emph{intersection cut} for $B$ and $F$ if  $\cconv\bra{B\setminus \Int\bra{F}}\subseteq \set{x\in \Real^{n}\,:\, g(x)\leq 0}$ and   $g$ is convex.

	We let a  \emph{split} be a set of the form $\set{x\in \Real^{n}\,:\, \pi^Tx\in[\pi_0,\pi_1]}$ for some $\pi\in \Real^n\setminus\set{0_n}$ and $\pi_0,\,\pi_1\in \Real$  such that $\pi_0<\pi_1$.
	If $F$ is a split, we say that the associated intersection cut is a \emph{split cut}. Besides, if $F$ is a split with $\pi=e^i$ for some $i\in [n]$, we refer to $F$ as an \emph{elementary split} and to the the associated split cut as an \emph{elementary split cut}.
	
	We let a \emph{k-branch split} be a set of the form $\bigcup_{i=1}^k \set{x\in \Real^{n}\,:\, \pi_0^i\leq \pi_i^T x\leq \pi_1^i}$ for some $\set{\pi_i}_{i=1}^k \subseteq \Real^n\setminus\set{0_n}$,  $\pi_0^i, \pi_1^i \in \Real$ such that $\pi_0^i<\pi_1^i$ for all $i\in [k]$. If $F$ is a k-branch split,  we say that the associated intersection cut is a \emph{k-branch split cut}.
	
When considering epigraphical sets of the form $B = \set{\bra{x,t} \in \Realnone\,:\, G\bra{x} \le t}$ for some closed convex function $G\bra{x}$, we often assume that $F$ is a cylinder whose axis lies along $t$ (i.e., $F$ is of the form $S \times \Real$ for some $S \subseteq \Real^n$). For instance, if $F$ is a split, we have $S = \set{(x,t)\in \Real^{n+1}\,:\, \pi^Tx\in[\pi_0,\pi_1]}$. However, in some cases, we consider a split that includes $t$ and we refer to such a split as a \emph{t-inclusive split}. More specifically, we let a t-inclusive split be a set of the form $\set{(x,t)\in \Real^{n+1}\,:\, \pi^Tx + \phat t \in[\pi_0,\pi_1]}$ for some $(\pi,\phat)\in \Real^{n+1}$ such that $\phat \neq 0$\footnote{We allow $\pi=0_n$ to consider disjunctions that only affect $t$.}, and $\pi_0,\,\pi_1\in \Real$  such that $\pi_0<\pi_1$. 
	If $F$ is a t-inclusive split,  we say that the associated intersection cut is a \emph{t-inclusive split cut}.
	
	We mostly restrict to the cases in which $\conv\bra{B\setminus \Int\bra{F}}$ is closed, so for notational convenience,  we let $\mybar{B} \eqq \conv\bra{B\setminus \Int\bra{F}}$ when $F$ is evident from the context.
\end{definition}
We note that the term intersection cut was introduced by Balas \cite{balas1971intersection} for the case in which $B$ is a translated simplicial cone, $F$ is convex and the unique vertex of $B$ is in $\Int\bra{F}$. In this setting, we have that $\conv\bra{B\setminus\Int\bra{F}}$ is closed and can be described by adding a single linear inequality to $B$. Furthermore, this single linear inequality has a simple formula dependent on the  \emph{intersections} of the extreme rays of $B$ with $F$. While we do not always have such intersection formulas for other classes of sets, we continue to use the term intersection cut in the more general setting and avoid any additional qualifiers for simplicity. In particular, we do not use the term \emph{generalized intersection cut} as it has already been used for the case of polyhedral $B$ and $F$ and in conjunction with an improved cut generation procedure for MILP \cite{genintersec}. The term split cut was introduced by Cook, Kannan and Schrijver \cite{DBLP:journals/mp/CookKS90}, and their original definition directly generalizes to non-polyhedral sets as in Definition~\ref{firstdef}. The term k-branch split cut was introduced by Li and Richard \cite{li2008cook}; 2-branch split cuts are also called cross cuts in  Dash, Dey and G{\"u}nl{\"u}k \cite{dash2012two}. These definitions also directly generalize to non-polyhedral sets as in Definition~\ref{firstdef}.

The interest of intersection cuts for MILP and MINLP arises from the fact that if $\Int(F)\cap  \mathbb{Z}^p\times \mathbb{R}^q=\emptyset$, an intersection cut for $B$ and $F$ is valid for  $\cconv\bra{B\cap \mathbb{Z}^p\times \mathbb{R}^q}$.
Hence, intersection cuts can be used to strengthen the continuous relaxation of MILP and MINLP problems.

Intersection cuts are particularly attractive in the MILP setting, since they can be quite strong and can be easily constructed. They were extensively studied when they were first
proposed in the 1970s  \cite{balas1971intersection,Gomory1969451,springerlink:10.1007/BF01584976} and have recently received renewed interest \cite{conforti2011corner,del2012relaxations}. Part of the relative simplicity and effectiveness of intersection cuts for MILP stems from two basic facts. The first one is that in the MILP setting, $B$ is a  polyhedron (i.e., the continuous relaxation of a MILP is an LP). The second one is the fact that every convex set $F$ such that $\Int(F)\cap  \mathbb{Z}^n=\emptyset$ (usually denoted a \emph{lattice free convex set}) and that is maximal with respect to inclusion for this property is also a  polyhedron \cite{lovasz1989geometry}. Restricting both $B$ and $F$ to be (convex) polyhedra give intersection cuts for MILP several useful properties. For instance, if $B$ and $F$ are polyhedra, then $\cconv\bra{B\setminus \Int\bra{F}}$ is a polyhedron   \cite{del2012relaxations}. Hence, in the MILP setting, we can  restrict our attention to linear intersection cuts. Furthermore, if $B$ is a translated simplicial cone and its unique vertex is $\Int\bra{F}$, then $\conv\bra{B\setminus \Int\bra{F}}$ is closed, can be described by adding a single linear inequality to $B$, and this linear inequality has a relatively simple formula  \cite{balas1971intersection,Gomory1969451,springerlink:10.1007/BF01584976}. In particular, if $F$ is a split and $B$ is a polyhedron, then all linear intersection cuts for $B$ and $F$  can be constructed from simplicial relaxations of $B$ and hence have simple formulas  \cite{DBLP:journals/mp/AndersenCL05,dash2011note,DBLP:journals/orl/Vielma07}. As discussed in Section~\ref{intro}, GMI cuts \cite{Gomory1969451,springerlink:10.1007/BF01584976} and MIR cuts  \cite{Marchand,DBLP:books/daglib/0090563,springerlink:10.1007/BF01585752,Wolsey} are two versions of these formulas. For more information on the ongoing efforts to duplicate this effectiveness for other lattice free polyhedra,  we refer the reader to \cite{conforti2011corner,del2012relaxations}. In this context, we note that $\conv\bra{B\setminus \Int\bra{F}}$ can fail to be closed even if $B$ and $F$ are polyhedra and $F$ is not a split (e.g. consider $B=\set{x\in \Real^2\,:\, x_2\geq 0}$ and $F=\set{x\in \Real^2\,:\, x_2\leq 1,\,x_1+x_2\leq 1}$). However,  $\conv\bra{B\setminus \Int\bra{F}}$ is closed in the polyhedral case if $F$ is convex and full-dimensional and the recession cone of $F$ is a linear subspace \cite{andersen2010analysis}. 

In the MINLP setting, there has been significant work on the computational use of linear split cuts  \cite{DBLP:conf/ipco/Bonami11,DBLP:journals/mp/CezikI05,springerlink:10.1007/s101070050103,Drewes,  mustafa}.  From the theoretical side, we know that  if $F$ is a split, then  $\conv\bra{B\setminus \Int\bra{F}}$ is closed even if $B$ is not polyhedral \cite{Dadush2011121}. With respect to formulas for intersection cuts, there has been some progress in the description of split cuts for quadratic sets in \cite{Atamturk07,AN:conicmir,Dadush2011121,Goez2012}. Dadush et al.  \cite{Dadush2011121} show that, if $B$ is an ellipsoid and $F$ is a split, then $\conv\bra{B\setminus \Int\bra{F}}$ can be described by intersecting $B$ with either a linear half space, an affine transformation of the second-order cone  (a.k.a. Lorentz cone), or an ellipsoidal cylinder. In addition, they give simple closed form expressions for all these linear and nonlinear split cuts.   Independently, \cite{Goez2012} studies split cuts for more general quadratic sets, but only for splits in which $\{x\in B\,:\, \pi^Tx= \pi_0\}$ and $\{x\in B\,:\, \pi^Tx= \pi_1\}$  are bounded. They give a procedure to find the associated split cuts, but do not give closed form expressions for them.   Finally, \cite{Atamturk07,AN:conicmir}  give a simple formula for an elementary split cut for the standard three dimensional second-order cone. While  \cite{Goez2012}  develops a procedure to construct split cuts through a detailed algebraic analysis of quadratic constraints developed in  \cite{belotti2013families}, \cite{Atamturk07,AN:conicmir,Dadush2011121} give formulas for split cuts through simple geometric arguments. As we have recently shown at the MIP 2012 Workshop, these geometric techniques can be extended to additional quadratic and basic semi-algebraic sets \cite{poster}. In this paper we show that the principles behind these geometric arguments can be abstracted from the semi-algebraic setting  to develop split and k-branch split cut formulas for  a wider class of specially structured convex sets. This abstraction greatly simplifies the proofs and can be used to construct  split cuts for essentially all convex sets described by a single quadratic inequality through simple linear algebra arguments. In addition to studying split and k-branch split cuts, we show how a commonly used  aggregation technique can be used to develop formulas for general nonlinear intersection cuts for the case in which $B$ and $F$ are both non-polyhedral, but share a common structure. While a non-polyhedral $F$ is not necessary in the MINLP settings (it still should be sufficient to consider maximal lattice free convex sets, which are polyhedral), they could still provide an advantage and are important in other settings such as trust region problems \cite{dan,polik2007survey} and  lattice problems \cite{DBLP:conf/ipco/BuchheimCL10,springerlink:10.1007/s10107-011-0475-x,MGbook}. We finally note that similar results for the quadratic case have  recently been independently developed in \cite{Kent}. We discuss the relation between the results in \cite{Kent} and our work at the end of Section~\ref{complicated}.

To describe our approach, we use the following additional definition. 
\begin{definition} Let $B\subseteq\mathbb{R}^{n}$ be a closed convex set, $F\subseteq\mathbb{R}^{n}$ be a closed set, and $g:\Real^n\to\Real$ be an arbitrary function.
We say inequality $g(x)\leq 0$ is a:
	\begin{itemize}
  \item \emph{valid cut}  if  $\mybar{B}\subseteq \set{x\in \Real^{n}\,:\, g(x)\leq 0}$,
	\item \emph{binding valid cut}  if it is valid and $\set{x\in B\setminus \Int\bra{F}\,:\, g(x)=0}\neq \emptyset$, and 
		\item \emph{sufficient cut}, if $\set{x\in B \,:\, g(x)\leq0}\subseteq \mybar{B}$.
	\end{itemize}
\end{definition}

Binding valid  cuts correspond to valid cuts that cannot be improved by translations, and sufficient cuts are those that are violated by any point of $B$ outside $\mybar{B}$.  We can show that a convex cut that is sufficient and valid  is enough to describe $\mybar{B}$ together with the original constraints defining $B$. Our approach to generating such cuts will be to construct cuts that are binding and valid by design, and that have simple structures from which sufficiency can easily be proven.

\section{Intersection cuts through interpolation} \label{NSCSec}

In this section we consider the case in which the base set is either the epigraph, lower level set, or a section of the epigraph of a convex function and the forbidden set corresponds to a split, t-inclusive split, or a k-branch split. Our cut construction approach is based on a simple interpolation technique that can be more naturally explained for splits and epigraphs of specially structured functions. For this reason, we begin with such a case and then consider special cases of non-epigraphical sets and discuss the limits of the interpolation technique. While the structures for which the technique yields simple formulas are quite specific, we can consider broader classes by considering affine transformations. In Section~\ref{intquadsec} we illustrate the power of this approach by showing how the interpolation technique yields formulas for intersection cuts for convex quadratic sets.

\subsection{Split cuts for epigraphical sets}

Let  $G:\mathbb{R}\to \mathbb{R}$ be a closed convex function, 
\begin{equation}
 \epi(G) := \set{\bra{z, t} \in \Real \times \Real :  G\left(z\right) \le t}
\end{equation}
be its epigraph, and let $F$ be an elementary split associated with $\pi = e^1$. Then  $\mybar{\epi(G)}=\epi(G)\cap \epi(J)$ for  
\begin{equation}\label{simple_splitcut}
	 J(z)=\frac{G(\pi_1) - G(\pi_0)}{\pi_1 - \pi_0} z +\frac{\pi_1 G(\pi_0) - \pi_0 G(\pi_1)}{\pi_1- \pi_0}.
\end{equation}
This is illustrated in Figure~\ref{firstex1}, where the graph of $G$ is given by the thick black curve and the graph of $J$ is depicted by the thin blue line. 
\begin{figure}[htb]
\centering 
\subfigure[Naive friends construction.]{\includegraphics[scale=0.35]{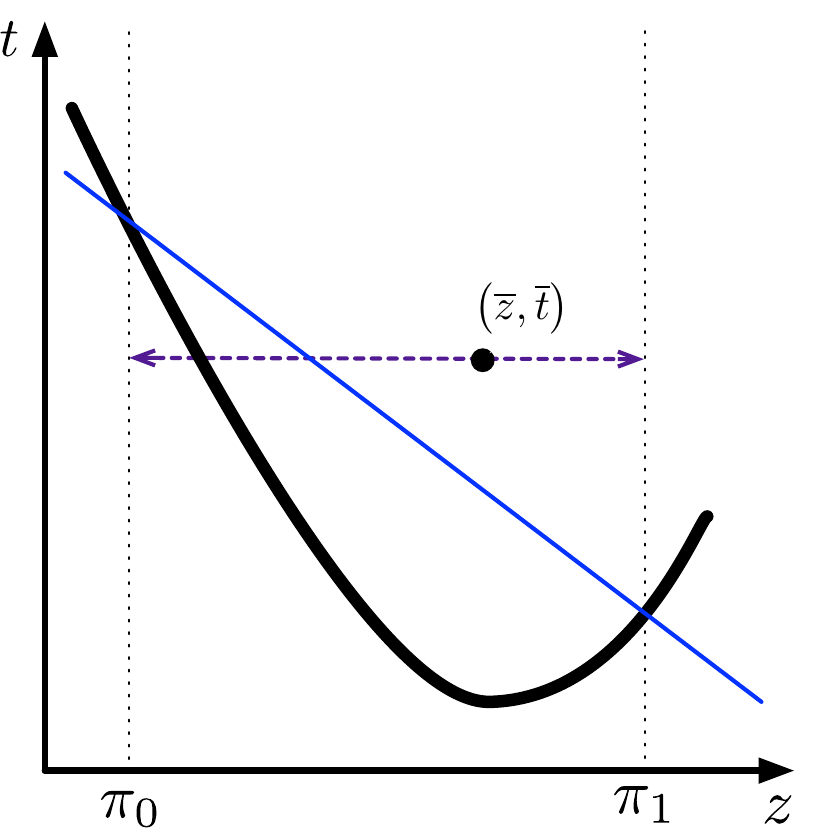}\label{firste2}}
\subfigure[Friends by following the slope.]{\includegraphics[scale=0.35]{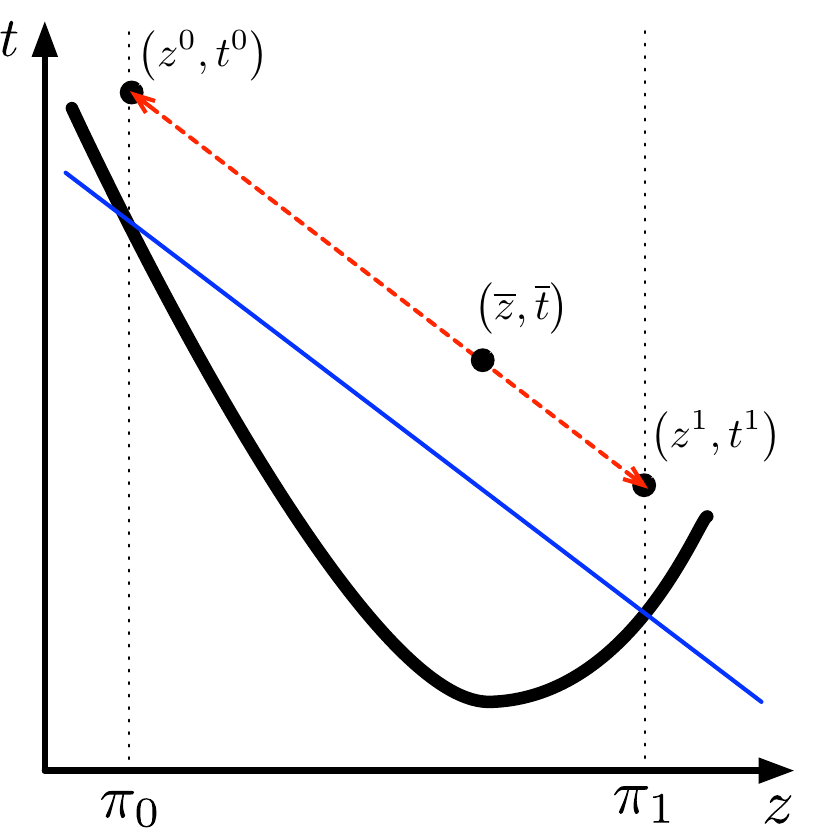}\label{firstex3}}
\caption{Interpolation technique for univariate functions.}\label{firstex1}
\end{figure}
Indeed, since $J$ is a linear function and hence  $\epi(G)\cap \epi(J)$  convex, it is enough to show 
that $J(z)\leq t$ is a valid and sufficient cut. 

We can check that $J(z)\leq t$ is a binding valid cut by design. Indeed,  $J$ is the (affine) linear interpolation of $G$ through $z=\pi_0$ and $z=\pi_1$. Convexity of $G$ then implies that this interpolation is below $G$ outside $z\in(\pi_0,\pi_1)$.

To show that the cut is sufficient, we need to show that any point $\bra{\overline{z},\overline{t}} \in \epi(G)$ that satisfies the cut is in $\mybar{\epi(G)}$. To achieve this, we can find two points $\bra{z^0,t^0}$ and $\bra{z^1,t^1}$ in $\epi(G)$ such that $z^0\leq \pi_0$, $z^1\geq \pi_1$, and $\bra{\overline{z},\overline{t}}\in\conv\bra{\set{\bra{z^0,t^0},\bra{z^1,t^1}} }$. Following \cite{dgv11}, we will denote these points the \emph{friends} of $\bra{\overline{z},\overline{t}}$. One naive way to construct the friends is to wiggle $\bra{\overline{z},\overline{t}}$  by  decreasing and increasing $\overline{z}$ until it reaches $\pi_0$ and $\pi_1$, respectively. However, as illustrated in Figure~\ref{firste2}, this can result in one of the friends falling outside $\epi(G)$. Fortunately, as illustrated in Figure~\ref{firstex3}, we can always wiggle by following the slope of the cut $J$ to assure that the friends are in $\epi(J)$. Correctness (i.e., containment of the friends in $\epi(G)$) then follows by noting that $J(z)=G(z)$  at $z=\pi_0$ and $z=\pi_1$, since $J(z)\leq t$ is a binding valid cut.
This two-stage procedure of binding validity through interpolation and sufficiency through friends can be formalized for general closed convex sets as follows.

\begin{proposition}\label{method1Cross}
	Let $B \subseteq \Real^n$ be a closed convex set and $F \subseteq \Real^n$ be closed. If $C \subseteq \Real^n$ is a closed convex set such that 
	\begin{subequations}\label{generalintcondCross}
		\bear
			B \cap \bd\bra{F} &=& C \cap \bd\bra{F} \label{generalintcondbCross}\\
			B \setminus \Int\bra{F} &\subseteq & C \setminus \Int\bra{F},\label{generalintcondcCross}
		\eear
	\end{subequations}
	and if
	\beq
	\mbox{for all~} \xbar \in C \cap \Int\bra{F} \mbox{there exists a finite set } \Gamma \subseteq C \cap \bd\bra{F} \mbox{ such that } \xbar\in \conv\bra{\Gamma}, \label{friendcond}
	\eeq
	
  then 
	\begin{equation}
		\mybar{B} = B \cap C.
	\end{equation}
\end{proposition}

\begin{proof}
We have that
\begin{equation}\label{AAAAACross}
B \setminus \Int\bra{F} \subseteq B \cap C \subseteq \mybar{B},
\end{equation}
where the first containment comes from \eqref{generalintcondcCross} and the last from \eqref{friendcond} and \eqref{generalintcondbCross}. The result follows by taking convex hull in \eqref{AAAAACross} and noting that $B \cap C$ is convex because both $B$ and $C$ are convex. 
\qed\end{proof}

Note that if $F$ is a split, we can always consider $\Gamma$ containing exactly two points (e.g Figure~\ref{firstex1} and Propositions~\ref{freefriendslinprop} and \ref{freefriendsconeprop}), while larger sets $\Gamma$ might be necessary for other forbidden sets (e.g. Proposition~\ref{GFormSplitCutCross}).
Our general approach to use Proposition~\ref{method1Cross} is to construct a convex function that yields binding valid cuts (i.e., satisfies \eqref{generalintcondCross}) and to use its specific geometric structure to construct friends for sufficiency. We now consider two structures in which the appropriate interpolation can easily be constructed once we identify the interpolations general form. The geometric structures of the resulting cuts yield two friends construction techniques. The first technique generalizes the univariate argument in Figure~\ref{firstex3} by noting that following the slope of $J$ is equivalent to moving in $\lin\bra{ \epi(J)}$. The second technique constructs the friends by moving in a ray contained in an appropriately constructed cone. These techniques are described in detail in Sections~\ref{SepFunSplit} and \ref{ENSF Section} respectively.
\subsubsection{Separable functions} \label{SepFunSplit}
Let $G$ be a separable function of the form $G(z,y)=f(z)+g(y)$ with $f:\Real \to \Real$ and $g:\Real^p\to \Real$ closed convex functions, and let $F$ be an elementary split associated with $\pi = e^1$. Analogous to \eqref{simple_splitcut}, we can simply interpolate $G$  parametrically on $y$ to obtain 
\begin{equation}\label{simple_splitcut_para}
	 J(z,y)=\frac{G(\pi_1,y) - G(\pi_0,y)}{\pi_1 - \pi_0} z +\frac{\pi_1 G(\pi_0,y) - \pi_0 G(\pi_1,y)}{\pi_1- \pi_0}.
\end{equation}
In this case, the interpolation simplifies to  
\begin{equation*}
	 J(z,y)= \frac{f(\pi_1) - f(\pi_0)}{\pi_1 - \pi_0} z +\frac{\pi_1 f(\pi_0) - \pi_0 f(\pi_1)}{\pi_1- \pi_0}+g(y),
\end{equation*}
which is convex on $\bra{z,y}$ and linear on $z$. Our original univariate argument follows through directly and we get $\mybar{\epi\bra{G}} = \epi\bra{G} \cap \epi\bra{J}$.
To illustrate this, consider $G:\Real\times \Real\to \Real$ given by $G(z,y)=z^2+y^2$ and let $F$ be the elementary split associated with $\pi = e^1$, $\pi_0=-10$, and $\pi_1=1$. Constructing a parametric linear interpolation as in \eqref{simple_splitcut_para} yields  
 \begin{equation*}
	 J(z,y)=\frac{1-100}{11}z + \frac{\bra{100 + y^2} + 10 \bra{1 + y^2}}{11}= -9z + 10 + y^2.
 \end{equation*}
 
 Function $J$ is convex on $(z,y)$,  linear on $z$, and can be easily shown to satisfy the conditions of Proposition~\ref{method1Cross}. We can thus conclude that it yields the associated split cut.
 In contrast, if we consider the non-elementary split $\pi = \bra{1,1}^T$ with the previous choices of $\pi_0$ and $\pi_1$ on the same function $G$, we need to proceed with more care. In particular, the parametric interpolation \eqref{simple_splitcut_para} cannot be directly applied since the disjunction affects both $z$ and $y$. However, we can construct the split cut by exploiting the fact that $G$ can be represented as 
\begin{equation}\label{piperpeq}
 G(z,y)= \frac{\bra{z+y}^2}{2} + \frac{\bra{z-y}^2}{2}= \frac{\bra{\pi^T (z,y)}^2}{2}+\frac{\bra{h^T(z,y)}^2}{2},
\end{equation}
where $h=(1,-1)^T$  is orthogonal to $\pi$. If we let $\tilde{z} = \pi^T (z,y)$,   $\tilde{y}=h^T(z,y)$, $\tilde{\pi}=\bra{1,0}$, $\tilde{\pi}_0=-10$, $\tilde{\pi}_1=1$, and $ \tilde{G}\bra{\tilde{z},\tilde{y}}=\tilde{z}^2/2+\tilde{y}^2/2$, we revert to the elementary case where we can apply the parametric interpolation \eqref{simple_splitcut_para} to obtain the split cut 
\begin{equation}
   \tilde{J}\bra{\tilde{z},\tilde{y}}= \frac{\tilde{G}\bra{\tilde{\pi}_1,\tilde{y}} - \tilde{G}\bra{\tilde{\pi_0},\tilde{y}}}{\tilde{\pi}_1 - \tilde{\pi}_0} \tilde{z} +\frac{\tilde{\pi}_1 \tilde{G}\bra{\tilde{\pi}_0,\tilde{y}} - \tilde{\pi}_0 \tilde{G}\bra{\tilde{\pi}_1,\tilde{y}}}{\tilde{\pi}_1- \tilde{\pi}_0}=\frac{-9\tilde{z} + 10 + \tilde{y}^2}{2}.
 \end{equation}
 We can then recover the split cut in the original $(z,y)$ space by replacing the definitions of $\tilde{z}$ and $\tilde{y}$.
The same procedure can be used for any separable function that is of, or can be converted to, the form $G(x)=  f\left(\pi^T x\right) + g\left(P_\pi^{\perp} x\right)$ where $g:\mathbb{R}^n\to \mathbb{R}$ and $f:\mathbb{R}\to \mathbb{R}$ are closed convex functions and $P_{\pi}^{\perp}:= I - \frac{\pi \pi^T}{\|\pi \|_2^2}\in \Real^{n\times n}$ is the matrix associated with the projection onto the orthogonal complement of $\pi$ ($P_{\pi}^{\perp}x$ plays the same role as $h^T(z,y)$ in \eqref{piperpeq}).
To formally prove this, we first show how the friends construction procedure of Figure~\ref{firstex3} can be extended to a general closed convex set $C$ by considering  properties of $\lin\bra{C}$. 

\begin{proposition}\label{freefriendslinprop}
Let $F \subseteq \Realn$ be a split and $C \subseteq \Realn$ be a closed convex set. If there exists $u \in \lin\bra{C}$ such that $\pi^T u \neq 0$, then  condition  \eqref{friendcond} in Proposition~\ref{method1Cross} is satisfied.
\end{proposition}

\begin{proof}
Let $\xbar \in C$ such that $\pi^T \xbar \in \bra{\pi_0,\pi_1}$ and $u \in \lin\bra{C}$ such that $\pi^T u \neq 0$. Also let   $x^i \eqq \xbar + \lambda_i u$ for $i \in \set{0,1}$, where 
\beqn
\lambda_i = \frac{\pi_i - \pi^T \xbar}{\pi^T u},
\eeqn
and let  $\beta \in (0,1)$ be such that $\pi^T \xbar = \beta \pi_0 + \bra{1-\beta} \pi_1$.
Because $u \in \lin\bra{C}$ and since $\pi^T x^i = \pi_i$,  we have $x^i \in C \cap \bd\bra{F}$ for $i \in \set{0,1}$.  The results then follows by noting that $\xbar = \beta x^0 + \bra{1-\beta} x^1$.
\qed
\end{proof}

Using Propositions~\ref{method1Cross} and \ref{freefriendslinprop}  we obtain the following split cut formula for separable functions. 

\begin{restatable}{proposition}{GFormSplitCut} \label{GFormSplitCut}
	Let $F$ be a split, $g:\mathbb{R}^n\to \mathbb{R}$ and $f:\mathbb{R}\to \mathbb{R}$ be closed convex functions, \[ S_{g,f} := \set{\bra{x, t} \in \Realnone : g\left(P_\pi^{\perp} x\right) + f\left(\pi^T x\right) \le t},\]
$a = \frac{f(\pi_1) - f(\pi_0)}{\pi_1 - \pi_0}$, and $b = \frac{\pi_1 f(\pi_0) - \pi_0 f(\pi_1)}{\pi_1- \pi_0}$. Then $\mybar{S_{g,f}} = S_{g,f} \cap C$, where
\[C = \set{ \bra{x, t} \in \Realnone\,:\, g\left(P_\pi^{\perp} x\right) + a \pi^T x + b \le t}.\]
\end{restatable}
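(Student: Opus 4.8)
The plan is to apply Proposition~\ref{method1propB} to the closed convex function $F(x) = g\bra{P_\pi^\perp x} + f\bra{\pi^T x}$, for which $S_{g,f} = \epi(F)$, using as the candidate cut the function $G(x) = g\bra{P_\pi^\perp x} + a\pi^T x + b$. In the coordinates $z = \pi^T x$ and $y = P_\pi^\perp x$ the map $F$ has exactly the separable form of \eqref{simple_splitcut_para_sep}, and $G$ is precisely the associated parametric interpolation, so this proposition is the rigorous multivariate incarnation of that construction. Since $x \mapsto P_\pi^\perp x$ is linear, $g$ is closed convex, and $a\pi^T x + b$ is affine, $G$ is closed and convex; hence $\epi(F) \cap \epi(G)$ is convex and equals the set on the right-hand side of the claimed identity. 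It therefore suffices to verify the three hypotheses of Proposition~\ref{method1propB}.

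First I would check the interpolation conditions \eqref{generalintcond}. The definitions of $a$ and $b$ are engineered so that the univariate affine function $\ell(s) = a s + b$ is the chord of $f$ through $\bra{\pi_0, f(\pi_0)}$ and $\bra{\pi_1, f(\pi_1)}$; a one-line substitution gives $\ell(\pi_0) = f(\pi_0)$ and $\ell(\pi_1) = f(\pi_1)$. Thus whenever $\pi^T x \in \set{\pi_0, \pi_1}$ the $g\bra{P_\pi^\perp x}$ terms match and $G(x) = F(x)$, which is \eqref{generalintcondb}. For $\pi^T x \notin (\pi_0, \pi_1)$, convexity of the univariate $f$ forces its chord to lie weakly below it outside $[\pi_0,\pi_1]$, i.e.\ $\ell(\pi^T x) \le f(\pi^T x)$; adding $g\bra{P_\pi^\perp x}$ to both sides gives \eqref{generalintcondc}.

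The substantive step is the friends condition, where the separable-after-a-linear-change-of-coordinates structure lets the one-dimensional ``follow the slope'' argument run fiber by fiber. Given $\bra{\overline{x},\overline{t}} \in \epi(G)$ with $\overline{s} := \pi^T\overline{x} \in (\pi_0, \pi_1)$, I would move $\overline{x}$ along the direction $\pi$, which is the unique direction that changes $\pi^T x$ while leaving $P_\pi^\perp x$, and hence the term $g\bra{P_\pi^\perp x}$, fixed. Concretely, set $x^j = P_\pi^\perp \overline{x} + \frac{\pi_j}{\norms{\pi}^2}\pi$ for $j \in \set{0,1}$, so that $\pi^T x^j = \pi_j$ and $P_\pi^\perp x^j = P_\pi^\perp \overline{x}$. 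With the weight $\lambda = \frac{\overline{s}-\pi_0}{\pi_1-\pi_0} \in (0,1)$ the $x$-coordinate of $\lambda x^1 + (1-\lambda) x^0$ recovers $\overline{x}$, and a short computation of $\lambda f(\pi_1) + (1-\lambda)f(\pi_0) = a\overline{s} + b$ shows that the same combination of the boundary points $\bra{x^j, F(x^j)}$ lands exactly at $\bra{\overline{x}, G(\overline{x})}$.

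Finally, because $\bra{\overline{x}, \overline{t}} \in \epi(G)$ means $\overline{t} \ge G(\overline{x})$, I would absorb the slack by lifting both candidate friends vertically by $\overline{t} - G(\overline{x}) \ge 0$, taking $t^j = F(x^j) + \bra{\overline{t} - G(\overline{x})}$; these points remain in $\epi(F)$, they satisfy $\pi^T x^0 = \pi_0$ and $\pi^T x^1 = \pi_1$ so they lie outside the open split, and their $\lambda$-combination now equals $\bra{\overline{x}, \overline{t}}$ exactly. This exhibits the required friends, and Proposition~\ref{method1propB} then yields $S_{g,f}^{\pi,\pi_0,\pi_1} = \epi(F) \cap \epi(G)$, which is the assertion. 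The one point that deserves care is precisely this vertical lifting, since it is what lets a segment between two points of $\epi(F)$ pass through a point strictly above the graph of $G$; it is harmless only because $\epi(F)$, being an epigraph, is closed under increasing $t$.
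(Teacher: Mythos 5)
Your proof is correct and follows essentially the same route as the paper: both invoke Proposition~\ref{method1propB} with the same $F$ and $G$, verify the interpolation conditions from convexity of $f$ and the chord property of $a,b$, and build friends at $x^j = P_\pi^\perp\overline{x}+\frac{\pi_j}{\norm{\pi}_2^2}\pi$. The only (cosmetic) difference is that your vertical-lift choice $t^j = F(x^j)+\bra{\overline{t}-G(\overline{x})}$ reduces algebraically to the paper's formula $t^j=\overline{t}+a\bra{\pi_j-\pi^T\overline{x}}$ and handles in one stroke the two cases ($f(\pi_0)=f(\pi_1)$ versus $f(\pi_0)\neq f(\pi_1)$) that the paper treats separately.
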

\begin{proof}
	Interpolation condition \eqref{generalintcondCross} holds by the definition of $a$ and $b$ and convexity of $f$. Friends condition \eqref{friendcond} follows from Proposition~\ref{freefriendslinprop} by noting that  $u = \bra{\pi,a \normss{\pi}} \in \lin\bra{C}$ and $\bra{\pi,0}^T u \neq 0$. The result then follows from Proposition~\ref{method1Cross}.
\qed
\end{proof}

\subsubsection{Non-separable positive homogeneous functions} \label{ENSF Section}

Proposition~\ref{method1Cross} can also be used to construct cuts for some non-separable functions, but as illustrated in the following example, we need slightly more complicated interpolations.  Consider $G:\Real\times \Real\to \Real$ given by $G(z,y)=\sqrt{z^2+y^2}$ and let $F$ be the elementary split associated with $\pi = e^1$, $\pi_0=-10$, and $\pi_1=1$. Constructing a parametric \emph{linear} interpolation as in \eqref{simple_splitcut_para} yields  
 \begin{equation}
	 J_L(z,y)=\frac{10 \sqrt{1 + y^2} + \sqrt{100 + y^2} + 
	 z \bra{\sqrt{1 + y^2} - \sqrt{100 + y^2}}}{11}.
 \end{equation}
 The associated cut is certainly valid, binding,  and sufficient for $\mybar{\epi\bra{G}}$ (we can always find friends by wiggling $z$ toward $\pi_0$ and $\pi_1$, and using $t$ to correct by following the slope of $J_L$ for fixed $y$). However, while $J$ is linear with respect to $z$, it is not convex with respect to $y$. We hence cannot use Proposition~\ref{method1Cross} for this interpolation. Fortunately, we can construct an alternative  interpolation given by 
 \begin{equation}\label{conicinterpolation}
	 J_C(z,y)={\sqrt{\bra{\frac{20 - 9 z}{11}}^2 +  y^2  }}
 \end{equation}
 that is convex on $(z,y)$. This function is not linear on $z$ for fixed  $y$, but we can still show it satisfies the interpolation condition \eqref{generalintcondCross} by noting that  $\bra{\frac{20 - 9 z}{11}}^2\leq z^2$ for any $z\notin (\pi_0,\pi_1)$ and that equality holds for $z\in \set{\pi_0,\pi_1}$. This is illustrated in Figure~\ref{firstpos2} for $y=-4$ where the graphs of $G$, $J_C$, and $J_L$ are given by the thick black curve, the thin blue curve, and the dash-dotted green line, respectively. The figure shows that  $J_C(z,y)\leq t$ is a nonlinear binding valid cut, but is strictly weaker than $J_L(z,y)\leq t$. While $J_C$ yields a weaker cut than $J_L$, $J_C$ is in fact the strongest \emph{convex} function that satisfies the interpolation condition \eqref{generalintcondCross} and we can show that $\mybar{\epi(G)}=\epi(G)\cap\epi(J_C)$. However, for the point $\bra{\overline{z},\overline{y},\overline{t}} \in \epi\bra{J_C}\cap \Int\bra{F}$ with $\overline{y}=-4$  depicted in Figure~\ref{firstpos2}, the friends construction  cannot be done by wiggling in a direction that leaves $\overline{y}$ fixed to $-4$. In other words, there are points in  $\hat{H}:=\set{\bra{z,y,t}\in \Real^3\,:\, y=-4}$ that do not have friends in $\hat{H}$. We can construct friends by wiggling in a direction that does change $\overline{y}$, but  since $\lin(\epi(J_C))=\emptyset$, such direction cannot be directly obtained from Proposition~\ref{freefriendslinprop}. Fortunately, the general idea of Proposition~\ref{freefriendslinprop} can be adapted to obtain a variant that directly reveals an appropriate direction.
\begin{figure}[htb]
\centering 
\includegraphics[scale=0.5]{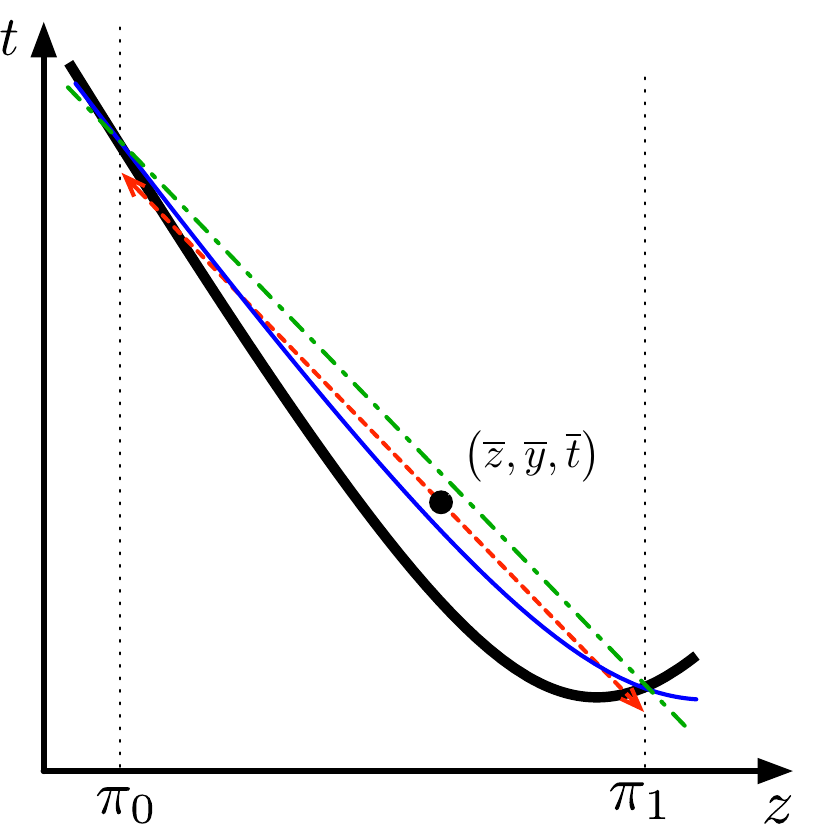}
\caption{Nonlinear interpolation for non-separable functions.}
\label{firstpos2}
\end{figure}

 The variant of Proposition~\ref{freefriendslinprop} that we need, exploits a different geometric characteristic of $\epi\bra{J_C}$ through the  generalization of  a technique used in \cite{Atamturk07,AN:conicmir}. The required geometric characteristic is given by the following definition.

\begin{definition} Let $C \subseteq\mathbb{R}^{n}$ be a closed convex set. We say $C$ is a \emph{translated cone} or \emph{conic set} if there exists $\xstar \in C$  such that $C-\xstar  $ is a convex cone. We refer to such $\xstar $ as \emph{an apex} of $C$, noting that it is not necessarily unique (e.g. a half space is a conic set whose apex is not unique).
\end{definition}

One can check that $\epi\bra{J_C}$ is a conic set with the unique apex $\bra{\zstar,\ystar ,\tstar } = \bra{20/9,0,0}$. Hence, because $(\bar{z},\mybar{y},\tbar)\in \epi\bra{J_C}$, we have that the ray
\beq
R:=\set{\bra{\zstar,\ystar ,\tstar } +\alpha \bra{\bra{\bar{z},\mybar{y},\tbar}-\bra{\zstar,\ystar ,\tstar }}\,:\, \alpha\geq0}\subseteq \epi\bra{J_C}. \label{ray}
\eeq
Furthermore, because $\zstar>\pi_1$ and $\bar{z}\in (\pi_0,\pi_1)$, there exists $\alpha_i>0$ such that $\zstar+\alpha_i (\bar{z}-{\zstar})=\pi_i$ for each $i\in \set{0,1}$. Therefore the friends of $(\bar{z},\mybar{y},\tbar)$ are given by $\bra{z^i,y^i,t^i}:=\bra{\zstar,\ystar ,\tstar }+\alpha_i \bra{(\bar{z},\mybar{y},\tbar)-\bra{\zstar,\ystar ,\tstar }}$ for $i\in \set{0,1}$. 

\begin{figure}[htb]
\centering 
\subfigure[Construction in the $\bra{z,y,t}$ space.]{\includegraphics[scale=0.5]{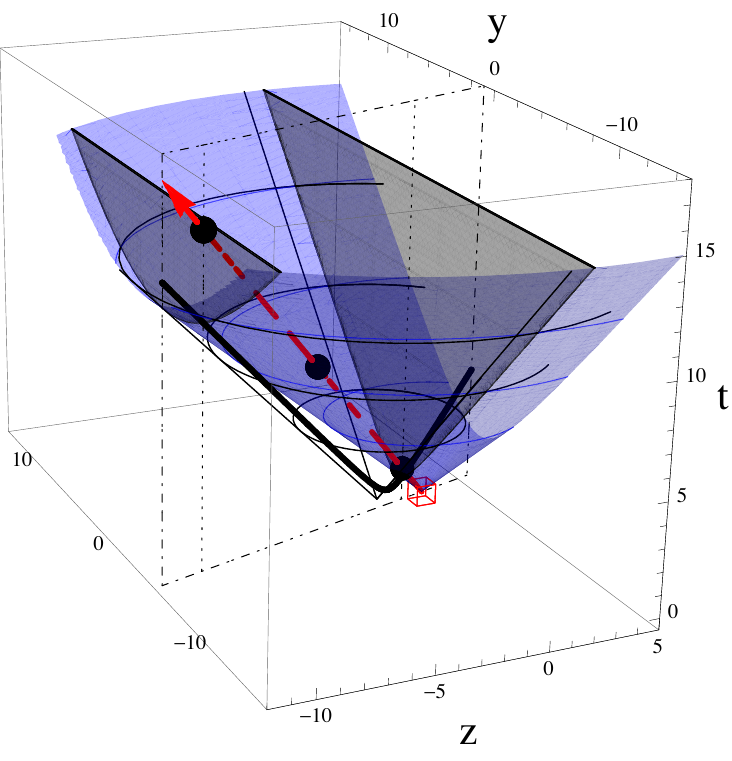}\label{sectionfig2}}
\subfigure[Construction in the hyperplane $\tilde{H}$.]{\includegraphics[scale=0.4]{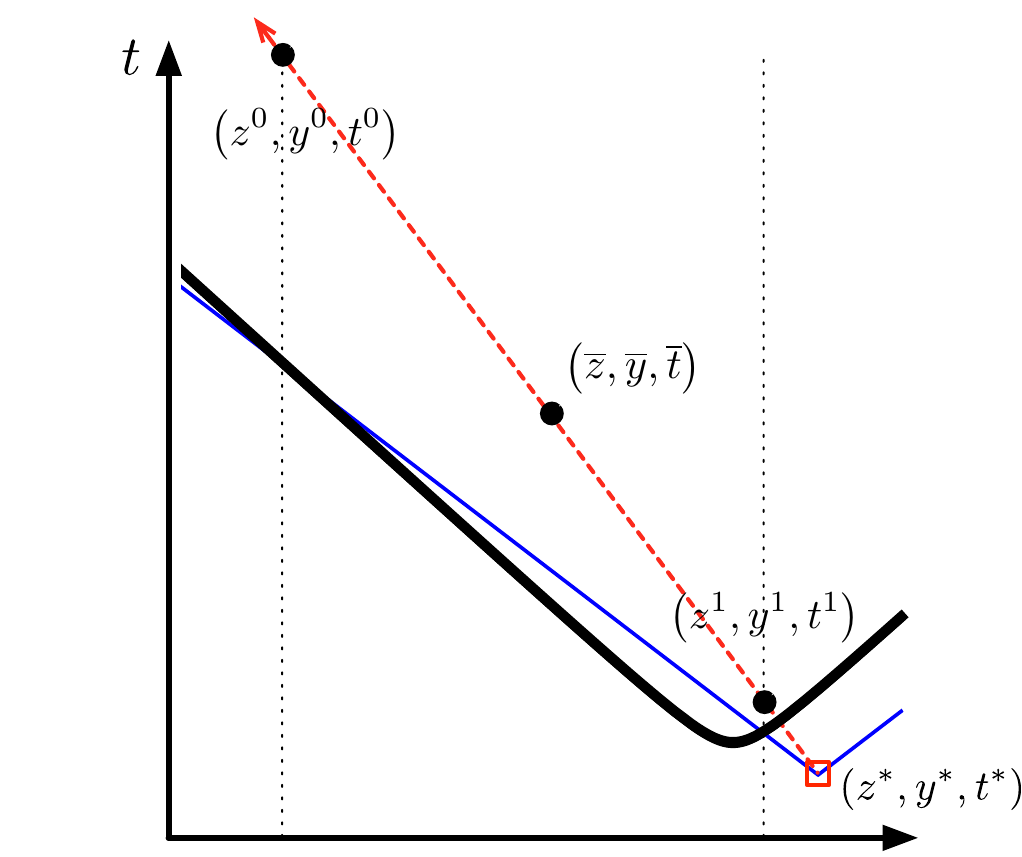}\label{sectionfig3}}
\caption{Friends construction for non-separable positive homogeneous functions.}\label{conefig}
\end{figure}
Figure~\ref{conefig} illustrates the ray-based friends construction for $(\bar{z},\mybar{y},\tbar)$ with $\mybar{y}=-4$.  Figure~\ref{sectionfig2} shows the construction in the $\bra{z,y,t}$ space, while Figure~\ref{sectionfig3} shows the section obtained by intersecting Figure~\ref{sectionfig2} with the hyperplane $\tilde{H}:=\aff\bra{R\cup\set{(0,0,1)}}$, for the ray $R$ given in \eqref{ray}.  The intersection of $\tilde{H}$ with the bounding box is depicted by the dash-dotted line in Figure~\ref{sectionfig2}. The graph of $G$ is given by a black wire-frame in Figure~\ref{sectionfig2}, while the intersection of this graph with $\tilde{H}$ is given by the thick black curve in both figures. Meanwhile, the graph of $J_C$ is depicted by the blue shaded region in Figure~\ref{sectionfig2} and by a thin blue curve in Figure~\ref{sectionfig3}. The figures also depict $\bra{z^i,y^i,t^i}$ for $i\in \set{0,1}$ and $(\bar{z},\mybar{y},\tbar)$ as black dots and $\bra{\zstar,\ystar ,\tstar }$ as a red box. In addition, the intersection of $z=\pi_i$ for $i\in\set{0,1}$ with the epigraphs of both $G$ and $J_C$ are depicted in Figure~\ref{sectionfig2} by the gray shaded regions. The intersection of $z=\pi_i$ for $i\in\set{0,1}$ with $\tilde{H}$ are depicted in both figures by dotted lines. Finally, ray R is depicted in both figures as a red dashed arrow.  Note that $\tilde{H}$ is tilted in the $\bra{z,y}$ space precisely to contain $\bra{\zstar,\ystar ,\tstar }$ and $\bra{\bar{z},\mybar{y},\tbar}$. Noting that  $y^* \neq \mybar{y}$ we have that, unlike $\hat{H}$, $\tilde{H}$ allows the variation of $y$. Furthermore, while $(\bar{z},\mybar{y},\tbar)\in\hat{H}\cap \tilde{H}$ might not have friends in $\hat{H}$, Figure~\ref{conefig} shows that it does have friends in $\tilde{H}$.

Similarly to Proposition~\ref{freefriendslinprop}, the above conic friends construction can be extended to general convex sets as follows.  
\begin{proposition}\label{freefriendsconeprop}
Let $F \subseteq \Realn$ be a split. If $C \subseteq \Realn$ is a conic set with  apex $\xstar \in \Real^n$ such that $\pi^T \xstar \notin \bra{\pi_0,\pi_1}$, then  condition  \eqref{friendcond} in Proposition~\ref{method1Cross} is satisfied.
\end{proposition}

\begin{proof}
Let $\xbar \in C$ such that $\pi^T \xbar \in \bra{\pi_0,\pi_1}$. Note that since $\xstar$ is the apex of $C$, all points on the ray $ R := \set{ x^{*} + \alpha \bra{\xbar - \xstar} : \alpha \in \mathbb{R}_+ }$
belong to $C$. Let the intersections of $R$ with the hyperplanes $\pi^T x = \pi_0$ and $\pi^T x = \pi_1$ be $x^0$ and $x^1$, respectively. Such points are obtained from $R$ by setting
\beqn
\alpha_i = \frac{\pi_i - \pi^T \xstar}{\pi^T \xbar - \pi^T \xstar},
\eeqn
for $i \in \set{0,1}$. We have $x^i \in C \cap    \bd\bra{F}$ for $i \in \set{0,1}$, since $\pi^T x^i = \pi_i$ and $R\subseteq C$.
Note that $\xbar$ is obtained from $R$ by setting $\alpha=1$. If $\alpha_0 < 1 < \alpha_1$ or  $\alpha_1 < 1 < \alpha_0$, then there exists $\beta \in (0, 1)$ such that $\xbar = \beta x^0 + \bra{1-\beta} x^1$. Seeing that $\pi^T \bar{x} \in \bra{\pi_0, \pi_1}$ and $\pi^T \xstar \notin \bra{\pi_0, \pi_1}$, one can check $\alpha_0 < 1 < \alpha_1$ or $\alpha_1 < 1 < \alpha_0$.
\qed
\end{proof}


Note that Propositions~\ref{freefriendslinprop} and \ref{freefriendsconeprop} ask for very different requirements on $C$. In Proposition~\ref{freefriendslinprop}, we only need to have a direction $u \in \lin\bra{C}$ such that $\pi^T u \neq 0$. In such case, $C$ always defines a non-pointed region (i.e., $C$ contains a line). On the other hand, as illustrated by \eqref{conicinterpolation}, the sets $C$ for which Proposition~\ref{freefriendsconeprop} is applicable are usually pointed (i.e. $C$ has at least one extreme point). However, pointedness is not a requirement in Proposition~\ref{freefriendsconeprop} (e.g. half-spaces are conic sets). The real price of Proposition~\ref{freefriendsconeprop} over Proposition~\ref{freefriendslinprop} is requiring $C$ to be conic, which is a much more global requirement than asking for the lineality space of $C$ to contain a non-orthogonal direction to $\pi$. However, both propositions are needed to construct split cuts for positive homogeneous functions.  To see this, consider the same function $G(z,y)=\sqrt{z^2+y^2}$ for which \eqref{conicinterpolation} yields a split cut, but instead consider the split $z\in  [-1,1]$. For this case, we can check that $\mybar{\epi(G)}=\epi(G)\cap\epi(J_D)$ for $J_D(z,y)=\sqrt{1+y^2}$, which does not have a conic epigraph. However, $(1,0,0)\in \lin(\epi(J_C))$ and hence Proposition~\ref{freefriendslinprop} is applicable. This dichotomy between a non-pointed and a conic (and potentially pointed) cut will be a common occurrence that we highlight further when characterizing intersection cuts for quadratic sets in Section~\ref{intquadsec}.

While Propositions~\ref{freefriendslinprop} and \ref{freefriendsconeprop} can be used to prove sufficiency of the split cuts for positive homogeneous functions, such cuts first have to be constructed with an appropriate interpolation technique. Fortunately, both interpolations of $G(z,y)=\sqrt{z^2+y^2}$ (conic and non-pointed)  can be generalized to functions based on $p$-norms  by using the following simple lemma whose proof is included in the appendix.

\begin{restatable}{lemma}{ghyperlemma} \label{ghyper interpolation lemma}
 Let $p \in {\mathbb{N}}$,  $\pi_0,\pi_1 \in \Real$ such that $\pi_0 < \pi_1$, $l \in \Real$, $a = \frac{\bra{\abs{l}^p + \abs{\pi_1}^p}^{1/p} - \bra{\abs{l}^p + \abs{\pi_0}^p}^{1/p}}{\pi_1 - \pi_0}$, and $b = \frac{\pi_1 \bra{\abs{l}^p + \abs{\pi_0}^p}^{1/p} - \pi_0 \bra{\abs{l}^p + \abs{\pi_1}^p}^{1/p}}{\pi_1 - \pi_0}$.
\begin{itemize}
\item If $s \in \set{\pi_0, \pi_1}$, then $\abs{as +b}^p = \abs{s}^p + \abs{l}^p$ and
	\item if $s \notin \bra{\pi_0, \pi_1}$, then $\abs{as +b}^p \le \abs{s}^p + \abs{l}^p$.
\end{itemize}
\end{restatable}



Using this lemma we can construct split cuts for epigraphs of a wide range of positive homogeneous convex functions and their sections (i.e. the epigraphs of such positive homogeneous functions after a variable is fixed to a constant).

\begin{restatable}{proposition}{ghyperprop}\label{ghyperprop}
	Let $F$ be a split, $\beta \in \mathbb{R}$, $l \in \Real$, $p\in \mathbb{N}$,  $g:\mathbb{R}^n\to \mathbb{R}$ be a positive homogeneous closed convex function, $a$ and $b$ as in Lemma~\ref{ghyper interpolation lemma},
and
\[H_{p,g} := \set{\bra{x, t} \in \Realnone : \left(g\left(P_\pi^{\perp} x\right)^p + \abs{\beta \pi^Tx}^p + \abs{\beta l}^p \right)^{1/p}\le t}.\] Then $\mybar{H_{p,g}} = H_{p,g} \cap C$, where 
\[ C = \set{ (x,t) \in \Realnone : 
	 \left(g\left(P_\pi^{\perp} x\right)^p + \left|\beta \bra{a\pi^T x+b}\right|^p \right)^{1/p}\le t}.\]
\end{restatable}
\begin{proof}
Interpolation condition \eqref{generalintcondCross} holds by the definition of $a$ and $b$ and Lemma~\ref{ghyper interpolation lemma}. If $\abs{\pi_0 } = \abs{\pi_1 }$, then $\bra{\pi,0} \in \lin\bra{C}$ and friends condition \eqref{friendcond} follows from Propositions~\ref{freefriendslinprop}. If $\abs{\pi_0} \ne \abs{\pi_1}$, then $C$ is a conic set with apex $\bra{x^*,t^*} = \bra{\frac{-b}{a \normss{\pi}} \pi,0}$. Furthermore, 
\beqn
\bra{\pi,0}^T \bra{x^*,t^*} = \pi^T x^* = \pi_1 + \bra{\abs{l}^p + \abs{\pi_1}^p}^{1/p} \rho = \pi_0 + \bra{\abs{l}^p + \abs{\pi_0}^p}^{1/p} \rho,
\eeqn
where $\rho = \frac{\pi_0 - \pi_1}{\bra{\abs{l}^p + \abs{\pi_1}^p}^{1/p} - \bra{\abs{l}^p + \abs{\pi_0}^p}^{1/p}}$. If $\abs{\pi_1} < \abs{\pi_0}$, then $\pi^T x^* \ge \pi_1$ and if $\abs{\pi_1} > \abs{\pi_0}$, then $\pi^T x^* \le \pi_0$. Therefore, friends condition \eqref{friendcond} follows from Proposition~\ref{freefriendsconeprop}. The result then follows from Proposition~\ref{method1Cross}.
\qed
\end{proof}

The following direct corollary of Proposition~\ref{ghyperprop} yields simplified formulas for split cuts when $l=0$ and  $H_{p,g}$ is the epigraph of a positive homogeneous convex function.  

\begin{restatable}{corollary}{pordersplit}\label{porder split}
	Let $F$ be a split, $\beta \in \mathbb{R}$, $p\in \mathbb{N}$,  $g:\mathbb{R}^n\to \mathbb{R}$ be a positive homogeneous closed convex function, $a = \frac{\pi_0 + \pi_1}{\pi_1 - \pi_0}$, $b = - \frac{2 \pi_1 \pi_0}{\pi_1 - \pi_0}$,
and
\[C_{p,g} := \set{\bra{x, t} \in \Realnone : \left(g\left(P_\pi^{\perp} x\right)^p + \abs{\beta \pi^Tx}^p \right)^{1/p}\le t}.\] If $0\notin (\pi_0,\pi_1)$, then $\mybar{C_{p,g}}=C_{p,g}$. Otherwise, 
$\mybar{C_{p,g}} = C_{p,g} \cap C$, where
\[ C = \set{ (x,t) \in \Realnone : 
	 \left(g\left(P_\pi^{\perp} x\right)^p + \left|\beta \bra{a \pi^T x+b}\right|^p \right)^{1/p}\le t}.\]
\end{restatable}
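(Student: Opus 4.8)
The plan is to apply Proposition~\ref{method1propB} with the positive homogeneous function $F(x)=\left(g\left(P_\pi^{\perp}x\right)^p+\abs{\beta\pi^Tx}^p\right)^{1/p}$, so that $C_{p,g}=\epi(F)$, and with the candidate interpolant $G(x)=\left(g\left(P_\pi^{\perp}x\right)^p+\abs{a\beta\pi^Tx+\beta b}^p\right)^{1/p}$, so that the claimed set is exactly $\epi(F)\cap\epi(G)$. First I would dispose of the degenerate case $0\notin(\pi_0,\pi_1)$. Here $\epi(F)$ is a cone with apex at the origin, whose $\pi$-coordinate $\pi^T\mathbf{0}=0$ lies outside $(\pi_0,\pi_1)$. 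Given any $(\xbar,\tbar)\in\epi(F)$ with $\pi^T\xbar\in(\pi_0,\pi_1)$, I would scale it by a suitable $\lambda>1$ so that $\pi^T(\lambda\xbar)\in\set{\pi_0,\pi_1}$; by the cone property $\lambda(\xbar,\tbar)\in\epi(F)$, and $(\xbar,\tbar)$ lies on the segment from the origin (in the opposite piece) to $\lambda(\xbar,\tbar)$. Thus the split removes nothing and $C_{p,g}^{\pi,\pi_0,\pi_1}=C_{p,g}$.

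For the main case $\pi_0<0<\pi_1$, convexity and closedness of both $F$ and $G$ are routine, since each is a $p$-norm composition of the nonnegative convex function $g\circ P_\pi^{\perp}$ and an affine form. The interpolation conditions \eqref{generalintcondb}--\eqref{generalintcondc} then follow at once from Lemma~\ref{interpolation lemma} applied to $s=\pi^Tx$: writing $\abs{a\beta\pi^Tx+\beta b}^p=\abs{\beta}^p\abs{a\pi^Tx+b}^p$ and $\abs{\beta\pi^Tx}^p=\abs{\beta}^p\abs{\pi^Tx}^p$, the trichotomy of the lemma gives $G=F$ on $\set{\pi^Tx\in\set{\pi_0,\pi_1}}$ and $G\leq F$ on $\set{\pi^Tx\notin(\pi_0,\pi_1)}$.

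The crux is the friends construction. The key observation is that $\epi(G)$ is again a cone, now with apex translated to $(x^*,0)$ where $x^*:=-\frac{b}{a}\frac{\pi}{\norm{\pi}_2^2}$, so that $\pi^Tx^*=-b/a$ and $P_\pi^{\perp}x^*=0$; indeed, substituting $x=x^*+v$ collapses the affine term $a\beta\pi^Tx+\beta b$ to $a\beta\pi^Tv$, making $G(x^*+v)$ positive homogeneous in $v$. Given $(\xbar,\tbar)\in\epi(G)$ with $\pi^T\xbar\in(\pi_0,\pi_1)$, I would follow the ray from the apex through $(\xbar,\tbar)$, which lies in $\epi(G)$ by the cone property. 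The decisive fact is that the apex's $\pi$-coordinate $-b/a$ lies strictly outside $[\pi_0,\pi_1]$: this is precisely Lemma~\ref{interpolation lemma} evaluated at $s=-b/a$, where $\abs{as+b}=0<\abs{s}$ forces $s\notin[\pi_0,\pi_1]$. Consequently the ray's $\pi$-coordinate is strictly monotone in the ray parameter and crosses one boundary hyperplane between the apex and $(\xbar,\tbar)$ and the other beyond $(\xbar,\tbar)$. These two crossing points are the friends: they bracket $(\xbar,\tbar)$ on a line, so it is their convex combination, and at each of them $\pi^Tx\in\set{\pi_0,\pi_1}$ with \eqref{generalintcondb} gives $G=F$, upgrading membership in $\epi(G)$ to membership in $\epi(F)$. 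Proposition~\ref{method1propB} then yields $\epi(F)^{\pi,\pi_0,\pi_1}=\epi(F)\cap\epi(G)$, which is the claimed description.

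I expect the friends construction to be the main obstacle, and within it the verification that a single ray from the apex meets both boundary hyperplanes; this hinges entirely on locating the apex strictly outside the strip via Lemma~\ref{interpolation lemma}. The degenerate subcase $a=0$ (that is, $\pi_1=-\pi_0$) must be handled separately, since then the apex escapes to infinity and $\epi(G)$ is instead a cylinder invariant along $\pi$; there the friends are obtained by freely wiggling $\pi^Tx$ to $\pi_0$ and $\pi_1$ with $P_\pi^{\perp}\xbar$ and $\tbar$ held fixed, which again lands in $\epi(F)$ at the boundary by \eqref{generalintcondb}. A minor point requiring care is confirming that the ray parameter of $(\xbar,\tbar)$ falls strictly between the two crossing values, which follows from $\pi^T\xbar\in(\pi_0,\pi_1)$ together with the strict monotonicity established above.
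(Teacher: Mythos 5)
Your proposal is correct and follows essentially the same route as the paper's proof: the same pair $F$, $G$ with interpolation verified via Lemma~\ref{interpolation lemma}, the same apex $x^*=-\frac{b}{a}\frac{\pi}{\norm{\pi}_2^2}$ and ray-through-the-apex friends construction (with the $a=0$, i.e.\ $\abs{\pi_0}=\abs{\pi_1}$, subcase handled by the fixed-$\tbar$ wiggle), and the same ray-through-the-origin argument for $0\notin(\pi_0,\pi_1)$. Your explicit justification that the apex's $\pi$-coordinate $-b/a$ lies outside $[\pi_0,\pi_1]$ (by evaluating the lemma at $s=-b/a$) is a nice touch the paper leaves as ``one can check.''
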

%

In particular, if $g$ is a $p$-norm and the splits are elementary, Corollary~\ref{porder split} further specializes as follows. 
\begin{corollary}\label{p-order cone split cut}
Let $F$ be an elementary split associated with $\pi = e^k$, $\| x \|_p = \bra{\sum \nolimits_{i=1}^{n} \abs{x_i }^p}^{1/p}$,  
\[K_p := \{ (x,t) \in \Realnone :  \|x\|_p\leq t \},\]
$a$ and $b$ as in Corollary~\ref{porder split},
and $\widehat{A}:=I-e^k {e^k}^T$.
If $0\notin (\pi_0,\pi_1)$, then $\mybar{K_p}=K_p $. Otherwise, $\mybar{K_p} = K_p \cap C$, where 	
 \[C = \set{\bra{x,t} \in \Realnone \,:\, \left\lVert\bra{\widehat{A}+ae^k {e^k}^T}x+b e^k\right\rVert_p\leq t	  }.\]
\end{corollary}
\begin{proof}
Direct from Corollary~\ref{porder split} by noting that
\beqn
K_p = \set{\bra{x,t} \in \Realnone \,:\, \bra{\left\lVert\widehat{A}x\right\rVert_p ^p + \abs{x_k}^p}^{1/p}\leq t	  },
\eeqn
and
\beqn
C = \set{\bra{x,t} \in \Realnone \,:\, \bra{\left\lVert\widehat{A}x\right\rVert_p^p + \abs{a x_k + b}^p}^{1/p}\leq t	  },
\eeqn
and seeing that $\widehat{A} = P_\pi^{\perp}$.
\end{proof}


 \subsection{Split cuts for level sets}

The interpolation technique can also be applied to some non-epigraphical sets. This is illustrated in the following proposition. 
\begin{proposition} \label{boundedSpli}
	Let $F$ be a split, $g:\mathbb{R}^n\to \mathbb{R}$ be a positive homogeneous convex function, $f:\mathbb{R}\to \mathbb{R}\cup\{+\infty\}$ be a closed convex  function such that $f(\pi_0), f(\pi_1) \le 0$, 
\[
L_{g,f} := \set{x \in \Real^n  : g\left(P_\pi^{\perp} x\right) + f(\pi^T x) \le 0},\] $a = \frac{f(\pi_1) - f(\pi_0)}{\pi_1 - \pi_0}$, and $b = \frac{\pi_1 f(\pi_0) - \pi_0 f(\pi_1)}{\pi_1- \pi_0}$.
 Then $\mybar{L_{g,f}} = L_{g,f} \cap C$, where
 \begin{equation}
	 C = \set{ x \in \Realn \,:\, g\left(P_\pi^{\perp} x\right)  +a \pi^T x+b\le 0}.
	 \label{homogeq}
 \end{equation}
\end{proposition}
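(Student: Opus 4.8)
The plan is to mirror the friends argument behind Proposition~\ref{method1propB}, but to replace the role played there by the free epigraph variable $t$ with a rescaling of the component $P_\pi^{\perp} x$. This substitution is legitimate precisely because $g$ is positive homogeneous and $P_\pi^{\perp}x$ is unaffected by the disjunction on $\pi^T x$, which is exactly the mechanism advertised before the statement. Concretely, I would set $F(x) = g\left(P_\pi^{\perp} x\right) + f(\pi^T x)$ and $G(x) = g\left(P_\pi^{\perp} x\right) + a\pi^T x + b$, so that $D_{f,g}=\set{x : F(x)\le 0}$ and the claimed set is $\set{x \in D_{f,g} : G(x) \le 0}$. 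Writing $Q = \set{x \in D_{f,g} : \pi^T x \notin (\pi_0,\pi_1)}$, the goal is the sandwich
\[
Q \;\subseteq\; \set{x \in D_{f,g} : G(x) \le 0} \;\subseteq\; \conv(Q) \;=\; D_{f,g}^{\pi,\pi_0,\pi_1},
\]
after which the result is immediate: the middle set is an intersection of the two convex sublevel sets $D_{f,g}$ and $\set{x:G(x)\le 0}$ (here $G$ is convex because $g$ is convex, $P_\pi^{\perp}$ is linear, and $a\pi^T x+b$ is affine), hence equals its own convex hull and is forced to coincide with $\conv(Q)$.

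The first inclusion is the validity (binding) step and is routine. By the choice of $a,b$ the affine map $s\mapsto as+b$ is exactly the chord of $f$ through $(\pi_0,f(\pi_0))$ and $(\pi_1,f(\pi_1))$, so convexity of $f$ gives $as+b\le f(s)$ for all $s\notin(\pi_0,\pi_1)$; note that every $x\in Q$ has $f(\pi^T x)$ finite since $F(x)\le 0$. Hence for $x\in Q$ we obtain $G(x)=g\left(P_\pi^{\perp} x\right)+a\pi^T x+b\le g\left(P_\pi^{\perp} x\right)+f(\pi^T x)=F(x)\le 0$.

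The substantive step is sufficiency, i.e.\ the friends construction giving the second inclusion. Take $\bar x$ with $F(\bar x)\le 0$ and $G(\bar x)\le 0$; if $\pi^T\bar x\notin(\pi_0,\pi_1)$ it already lies in $Q$, so I assume $\bar s:=\pi^T\bar x\in(\pi_0,\pi_1)$ and write $u=P_\pi^{\perp}\bar x$ together with the unique $\lambda\in(0,1)$ satisfying $\bar s=\lambda\pi_0+(1-\lambda)\pi_1$. I would seek friends of the form $x^i=c_i u+\frac{\pi_i}{\norm{\pi}_2^2}\pi$ with $c_i\ge 0$; then $\pi^T x^i=\pi_i$, $P_\pi^{\perp}x^i=c_i u$, and $\bar x=\lambda x^0+(1-\lambda)x^1$ as soon as $\lambda c_0+(1-\lambda)c_1=1$. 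Positive homogeneity gives $g(c_i u)=c_i g(u)$, so membership $x^i\in Q$ reduces to $c_i g(u)\le -f(\pi_i)$. Evaluating the chord at $\bar s$ yields $a\bar s+b=\lambda f(\pi_0)+(1-\lambda)f(\pi_1)$, so the hypothesis $G(\bar x)\le 0$ rewrites as $g(u)\le \lambda\bra{-f(\pi_0)}+(1-\lambda)\bra{-f(\pi_1)}$, where both $-f(\pi_i)\ge 0$. Choosing $c_i=\frac{-f(\pi_i)}{\lambda(-f(\pi_0))+(1-\lambda)(-f(\pi_1))}$ when the denominator is positive (and $c_0=c_1=1$ in the degenerate case, which forces $g(u)\le 0$) then satisfies both the feasibility bounds and the barycentric constraint, exhibiting $\bar x\in\conv(Q)$.

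The main obstacle is exactly this coefficient selection: unlike the epigraphical Propositions~\ref{GFormSplitCut} and~\ref{porder split}, there is no free $t$ to absorb slack, so one must verify that the single scalar inequality $G(\bar x)\le 0$ is precisely strong enough to furnish nonnegative $c_0,c_1$ on the ray through $u$ meeting both $c_i g(u)\le -f(\pi_i)$ and $\lambda c_0+(1-\lambda)c_1=1$. The degenerate subcase $g(u)\le 0$ (where the feasibility bounds hold for every $c_i\ge 0$) and the boundary possibility $f(\pi_0)=f(\pi_1)$ should be dispatched separately, in the same spirit as the two-case split in the proof of Proposition~\ref{GFormSplitCut}; once these are handled the conclusion follows from the sandwich above.
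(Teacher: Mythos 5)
Your proposal is correct and follows essentially the same route as the paper: the same sandwich $Q \subseteq \set{x \in D_{f,g} : G(x)\le 0} \subseteq \conv(Q)$, the same chord argument for validity, and the same friends construction — your coefficients $c_i = \frac{-f(\pi_i)}{\lambda(-f(\pi_0)) + (1-\lambda)(-f(\pi_1))}$ are exactly the paper's $\beta^i$, with positive homogeneity of $g$ playing the identical role. The only cosmetic difference is that the paper splits cases by $f(\pi_0)=f(\pi_1)$ versus not, while you split by whether the denominator vanishes; both dispositions cover the same ground.
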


\begin{proof}
Interpolation condition \eqref{generalintcondCross} holds by the definition of $a$ and $b$ and convexity of $f$. If  $f(\pi_0)=f(\pi_1)$, then $\pi \in \lin\bra{C}$ and friends condition \eqref{friendcond} follows from Proposition~\ref{freefriendslinprop}. If  $f(\pi_0) \neq f(\pi_1)$, then  $C$ is a conic set with  apex  $x^*=\frac{-b}{a \normss{\pi}}\pi$. Furthermore, 
\[ \pi^T x^* =\frac{\pi_0 f(\pi_1) -\pi_1 f(\pi_0)}{f(\pi_1)-f(\pi_0)}= \pi_1+\frac{(\pi_0-\pi_1)f(\pi_1)}{f(\pi_1)-f(\pi_0)}=\pi_0+\frac{(\pi_0-\pi_1)f(\pi_0)}{f(\pi_1)-f(\pi_0)}.\]
If $f(\pi_0)<f(\pi_1)$, then $\pi^T x^*\geq \pi_1$ and if $f(\pi_0)>f(\pi_1)$ then $\pi^T x^*\leq \pi_0$. Therefore, friends condition \eqref{friendcond} follows from Proposition~\ref{freefriendsconeprop}. The result then follows from Proposition~\ref{method1Cross}.
\qed
\end{proof}

As  a direct corollary of Proposition~\ref{boundedSpli}, we obtain formulas for elementary split cuts for balls of $p$-norms. 

\begin{corollary}\label{p-order ball split cut}
Let $F$ be an elementary split associated with $\pi = e^k$, $r \in \Real$ such that and $\abs{\pi_0},\abs{\pi_1}\leq r$, 
\[E_p := \{ x \in \mathbb{R}^n \ :  \|x\|_p\leq r \},\]
$f(u):=-\bra{r^p-\abs{u }^p}^{1/p}$, $a = \frac{f(\pi_1) - f(\pi_0)}{\pi_1 - \pi_0}$, $b = \frac{\pi_1 f(\pi_0) - \pi_0 f(\pi_1)}{\pi_1- \pi_0}$
,
and $\widehat{A}:=I-e^k {e^k}^T$.
Then $\mybar{E_p} = E_p \cap C$, where
 \[C = \set{ x  \in \Realn  :	\left\lVert\widehat{A}x\right\rVert_p+ax_k+b\leq 0	  }.\]
\end{corollary}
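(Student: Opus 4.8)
The plan is to recognize this as a direct specialization of Proposition~\ref{boundedSpli} with $\pi = e^k$, and then simply check that its hypotheses hold for the choices $g(\cdot) = \norm{\cdot}_p$ and $f(u) = -\bra{r^p - \abs{u}^p}^{1/p}$ (extended by $+\infty$ for $\abs{u} > r$). First I would record the two identities that make the dictionary work: since $\norm{e^k}_2 = 1$, we have $P_{e^k}^{\perp} = I - e^k {e^k}^T = \widehat{B}$ and ${e^k}^T x = x_k$, so $g\bra{P_{e^k}^{\perp} x} = \norm{\widehat{B} x}_p$ and $\pi^T x = x_k$. The function $g = \norm{\cdot}_p$ is a norm, hence convex and positively homogeneous, so the hypothesis on $g$ is immediate. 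It then remains to match $D_{f,g}$ with $B_p$: for $\abs{x_k} \le r$ the defining inequality $\norm{\widehat{B} x}_p + f(x_k) \le 0$ reads $\sum_{i \ne k} \abs{x_i}^p \le r^p - \abs{x_k}^p$, i.e.\ $\norm{x}_p \le r$, while for $\abs{x_k} > r$ we have $f(x_k) = +\infty$ and also $x \notin B_p$; hence $D_{f,g} = B_p$. The constants $a, b$ and the claimed cut inequality $\norm{\widehat{B} x}_p + a x_k + b \le 0$ are then literally the ones produced by the proposition.

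The one genuinely nontrivial verification --- and the step I expect to be the main obstacle --- is that $f$ is a closed convex function with $f(\pi_0), f(\pi_1) \le 0$. The sign condition is immediate from $\abs{\pi_0}, \abs{\pi_1} \le r$, which gives $f(\pi_i) = -\bra{r^p - \abs{\pi_i}^p}^{1/p} \le 0$. Closedness follows because $f$ is continuous on the compact interval $[-r, r]$ (with $f(\pm r) = 0$) and $+\infty$ elsewhere, hence lower semicontinuous. For convexity I would argue that $h(u) := \bra{r^p - \abs{u}^p}^{1/p} = -f(u)$ is concave on $[-r,r]$: the upper half of the two-dimensional $p$-ball, $R := \set{(u,v) \in \Real^2 : \abs{u}^p + \abs{v}^p \le r^p,\ v \ge 0}$, is convex as the intersection of the (convex) $p$-ball with a halfspace, and $h(u) = \sup\set{v : (u,v) \in R}$ is exactly the upper envelope of $R$ over the $u$-axis. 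Taking $u_1, u_2 \in [-r,r]$ with the suprema attained at $v_i = h(u_i)$, convexity of $R$ gives $\bra{t u_1 + (1-t) u_2,\ t v_1 + (1-t) v_2} \in R$, so $h\bra{t u_1 + (1-t) u_2} \ge t h(u_1) + (1-t) h(u_2)$; thus $h$ is concave and $f = -h$ is convex.

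With all hypotheses verified, the conclusion is immediate: Proposition~\ref{boundedSpli} yields $D_{f,g}^{e^k, \pi_0, \pi_1} = \set{x \in D_{f,g} : \norm{\widehat{B} x}_p + a x_k + b \le 0}$, and substituting $D_{f,g} = B_p$ and $g\bra{P_{e^k}^{\perp} x} = \norm{\widehat{B} x}_p$ gives exactly the claimed description of $B_p^{e^k, \pi_0, \pi_1}$. In short, essentially all of the work has already been done inside Proposition~\ref{boundedSpli}; the corollary amounts to exhibiting the $p$-ball as a level set of the required separable-plus-homogeneous form and checking the convexity of the radial profile $f$.
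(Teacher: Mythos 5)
Your proposal is correct and follows exactly the paper's route: the paper proves this corollary in one line by observing that $B_p = \set{x \in \mathbb{R}^n : \lVert\widehat{B}x\rVert_p + f(x_k) \le 0}$ and invoking Proposition~\ref{boundedSpli}, which is precisely your specialization with $\pi = e^k$, $g = \norm{\cdot}_p$, and the radial profile $f$. The additional hypothesis checks you supply (closedness and convexity of $f$ via concavity of the upper envelope of the two-dimensional $p$-ball) are left implicit in the paper but are correct and fill in the same argument.
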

\begin{proof}
	Direct from Proposition~\ref{boundedSpli} by noting that \[E_p =\set{ x \in \mathbb{R}^n \ :  \left\lVert\widehat{A}x\right\rVert_p+f(x_k)\leq 0 }\]
and $\widehat{A} = P_\pi^{\perp}$.
\qed\end{proof}

\subsection{Non-trivial extensions}

In this section we consider two non-trivial extensions/applications of the interpolation technique. The first example considers t-inclusive split cuts for epigraphical sets and illustrates the case when the interpolation coefficients cannot be easily calculated. The second example shows how the technique can be used beyond split sets to  construct k-branch split cuts for epigraphical sets. We hope these examples serve as a guide for future applications or  extensions of the interpolation technique.

\subsubsection{t-inclusive split cuts for epigraphical sets}\label{tincsec}
Consider the base set  $Q_0 = \set{\bra{x,t} \in \Real^2 \,:\, x^2 \le t}$ and the t-inclusive split $x+t\in [0,1]$.  The first step to construct the associated split cut  $C \subseteq \Real^2$ such that $\mybar{Q_0} = Q_0 \cap C$ is to find the general form of such cut. The inclusion of $t$ in the split prevents us from directly using the interpolation arguments for regular splits to construct this general form. However, by extrapolating these arguments to the t-inclusive setting and analyzing the geometry of the problem (e.g. the intersection of $Q_0$ with $x+t\in \set{0,1}$ corresponds to two ellipses), we may guess that the appropriate interpolation form is 
\beq
C = \set{\bra{x,t} \in \Real^2 \,:\, \sqrt{\bra{ax+b}^2} \le cx+dt+e}, \label{intform}
\eeq
for some interpolation coefficients $a,b,c,d, e \in \Real$. Unlike  the regular split setting, it is not immediately  clear what these coefficients should be, but we may try to deduce them by forcing  interpolation conditions \eqref{generalintcondCross}.  Interpolation condition \eqref{generalintcondbCross} corresponds to 
\bear
\set{\bra{x,t} \in Q_0 \,:\, t = -x} &=& \set{\bra{x,t} \in C \,:\, t = -x} \label{tex1}\\
\set{\bra{x,t} \in Q_0 \,:\, t = 1-x} &=& \set{\bra{x,t} \in C \,:\, t = 1-x} \label{tex2},
\eear
which induces an infinite number of constraints on the coefficients.\footnote{For instance, \eqref{tex1} implies $\sqrt{\bra{ax+b}^2} \le \bra{c-d}x+e$ for all $\bra{x,-x} \in Q_0$.}
We could try to reduce such set of constraints to find the interpolation coefficients. In particular, the arguments for the regular splits effectively reduce such set of constraints to two equality constraints. For instance, in the interpolation given in \eqref{simple_splitcut}, the corresponding interpolation conditions analogous to \eqref{tex1} and \eqref{tex2} reduce to $G\bra{\pi_i} = J\bra{\pi_i}$ for $i \in \set{0,1}$. To obtain a similar reduction, we here take a possibly  naive approach that, nonetheless,  is successful for several classes of cuts  and is flexible enough to be extended to more complicated base and forbidden sets. The idea of this approach is to note that  \eqref{tex1} and \eqref{tex2} can be expressed as
\bear
\set{x \in \Real \,:\, x^2 \le -x} &=& \set{x \in \Real \,:\, \bra{ax+b}^2 \le \bra{\bra{c-d}x + e}^2, \bra{c-d}x + e \ge 0} \label{tex3}\\
\set{x \in \Real \,:\, x^2 \le 1-x} &=& \set{x \in \Real \,:\, \bra{ax+b}^2 \le \bra{\bra{c-d}x + d + e}^2, \bra{c-d}x + d + e \ge 0}. \label{tex4}
\eear
A sufficient condition for these constraints is for the quadratic polynomials in both sides of \eqref{tex3} and \eqref{tex4} to be identical, and for the following condition to hold:
\bear
\set{x \in \Real \,:\, x^2 \le -x} &\subseteq& \set{x \in \Real \,:\, \bra{c-d}x + e \ge 0} \label{nonneg1}\\
\set{x \in \Real \,:\, x^2 \le 1-x} &\subseteq& \set{x \in \Real \,:\, \bra{c-d}x + d + e \ge 0}.\label{nonneg2}
\eear
Forcing the polynomials to be identical is a simple matter of matching coefficients, which results in the following set of polynomial inequalities on $a,b,c,d,$ and $e$.
\bearn
a^2 - \bra{c-d}^2 &=& 1\\
ab - \bra{c-d}e &=& 1/2\\
ab - \bra{c-d}\bra{d+e} &=& 1/2\\
b^2 - e^2 &=& 0\\
b^2 - \bra{d+e}^2 &=& -1.
\eearn

The above linear system has four  solutions given by $\bra{1,\frac{1}{2},\frac{\sqrt{5}-1}{2},\frac{\sqrt{5}-1}{2},\frac{1}{2}}$, $\bra{1,\frac{1}{2},\frac{-\sqrt{5}+1}{2},\frac{-\sqrt{5}+1}{2},\frac{-1}{2}}$, $\bra{1,\frac{1}{2},\frac{\sqrt{5}+1}{2},\frac{\sqrt{5}+1}{2},\frac{-1}{2}}$, and $\bra{1,\frac{1}{2},\frac{-\sqrt{5}-1}{2},\frac{-\sqrt{5}-1}{2},\frac{1}{2}}$, of which only the first satisfies the additional  conditions \eqref{nonneg1} and \eqref{nonneg2}. Note that since $c=d$ in the first solution, checking \eqref{nonneg1} and \eqref{nonneg2} is equivalent to checking $e\ge0$ and $d+e\ge0$, which is trivial. Furthermore, this point also satisfies the interpolation condition \eqref{generalintcondcCross} which in this case, corresponds to 
\beq
\set{\bra{x,t} \in Q_0 \,:\, x+t \notin (0,1)} \subseteq \set{\bra{x,t} \in C \,:\, x+t \notin (0,1)}. \label{tex5}
\eeq
Finally, to show that this choice of interpolation coefficients yields the desired split cut, note that $C$ for such coefficients is a conic set  with apex  $\bra{x^*,t^*}=\bra{\frac{-1}{2},\frac{\sqrt{5}-3}{2\sqrt{5}-2}}$ and $x^*+t^*<0$. Then 
 friends condition \eqref{friendcond} follows from Proposition~\ref{freefriendsconeprop}. 

Note that identifying the coefficients of the quadratic polynomials  and having \eqref{nonneg1} and \eqref{nonneg2} are sufficient  for  interpolation condition \eqref{generalintcondbCross}, but they may not be necessary in general. Hence, there might be other interpolation coefficients for which $\mybar{Q_0} = Q_0 \cap C$. Moreover, it is not even clear that \eqref{intform} is the only 
possible interpolation form for the associated split cut. However, if the described procedure is successful, we need not worry about alternative characterizations, since they will all yield $\mybar{Q_0}$ when intersected with $Q_0$. There is of course no guarantee that the above procedure for finding a representation of $C$ will always succeed. However, as we illustrate in Section~\ref{intquadsec}, the procedure is successful in constructing rather complicated cuts for quadratic sets.

\subsubsection{k-branch split cuts for epigraphical sets}
We now illustrate how  Proposition~\ref{method1Cross} can be used for forbidden sets other than splits by constructing certain k-branch split cuts for separable functions. The following proposition is a direct, but rather technical, generalization of Proposition~\ref{GFormSplitCut}, which explains our reasoning to postpone its introduction to this stage of the paper. 
\begin{proposition} \label{GFormSplitCutCross}
	Let $g:\Real \to\Real$ and  $f_i:\Real \to\Real$ for each $i\in[k]$ be closed convex functions. Furthermore, let $F$ be a k-branch split such that $\pi_i \perp \pi_j$ for every $i\neq j$. Finally, let $P_{\Pi}^ {\perp} := I - \sum_{i=1}^k\frac{\pi_i \pi_i^T}{\|\pi_i \|_2^2}$, 
	 \beqn
		 B_{g,f} := \set{\bra{x,t} \in \Realnone \,:\, g\bra{P_{\Pi}^ {\perp} x}+ \sum_{i=1}^k f_i\bra{\pi_i^Tx} \le t},
	 \eeqn
	 $a_i := \frac{f_i\bra{\pi_1^i} - f_i\bra{\pi_0^i}}{\pi_1^i - \pi_0^i}$,  $b_i := \frac{\pi_1^i f_i\bra{\pi_0^i} - \pi^i_0 f_i\bra{\pi_1^i}}{\pi_1^i- \pi_0^i}$ for all $i\in [k]$,
	 and for every $\mathcal{I}\subseteq [k]$ let
	 \[h_{\mathcal{I}}(x):=g\bra{P_{\Pi}^ {\perp} x}+  {\sum_{i\in [k]\setminus \mathcal{I}} f_i\bra{\pi_i^Tx}+  \sum_{i\in \mathcal{I}} a_i\pi_i^T x+ b_i}.\]
	 Then $\mybar{B_{g,f}} = B_{g,f} \cap C$, where
	 \[C = \set{\bra{x,t} \in \Realnone \,:\, \max_{\mathcal{I}\subseteq [k]} h_{\mathcal{I}}(x) \le t}.\]
\end{proposition}
\begin{proof}
	 Interpolation condition \eqref{generalintcondCross} holds by the definition of $a_i$ and $b_i$ and convexity of $f_i$. Now let $\bra{\overline{x},\tbar}\in C \cap \Int\bra{F}$. To construct the friends of  $\bra{\overline{x},\tbar}$ we proceed as follows. 

	Let $\mathcal{I}\subseteq [k]$ be such that for all $i\in \mathcal{I}$ we have $\pi_i^T \xbar\in \bra{\pi_0^i,\pi_1^i}$,  and for all $i\in [k]\setminus \mathcal{I}$ we have $\pi_i^T \xbar \notin \bra{\pi_0^i,\pi_1^i}$.  For each $s\in \set{0,1}^{\mathcal{I}}$, let
\beq
x^s = P_{\Pi}^{\perp} \bar{x} + \sum_{i\in [k]\setminus \mathcal{I}} \frac{\pi_i^T \bar{x}}{\norm{\pi_i}_2^2} \pi_i+ \sum_{i\in \mathcal{I}} \frac{s_i \pi^i_0 +(1-s_i)\pi_1^i}{\norm{\pi_i}_2^2} \pi_i, \quad t^s = \tbar + \sum_{i\in \mathcal{I}} a_i \bra{s_i \pi^i_0 +(1-s_i)\pi_1^i- \pi^T_i \bar{x}},
\eeq
and
\beq
\lambda_s= \prod_{i\in \mathcal{I}}\bra{ s_i \frac{\pi_1^i-\pi_i^T \xbar}{\pi_1^i-\pi_0^i}+(1-s_i)\frac{\pi_i^T \xbar-\pi_0^i}{\pi_1^i-\pi_0^i}}.
\eeq
One can check that $\bra{\overline{x},\tbar}=\sum_{s\in \set{0,1}^{\mathcal{I}}} \lambda_s \bra{x^s, t^s}$, $\sum_{s\in \set{0,1}^{\mathcal{I}}} \lambda_s=1$, and $\lambda_s\geq 0 $ for all $s\in \set{0,1}^{\mathcal{I}}$. Furthermore, by construction and the assumption on $\mathcal{I}$, we have that $x^s\in \bd\bra{F}$ and $\bra{x^s, t^s}\in \epi\bra{h_{\mathcal{I}}}$ for all $s\in \set{0,1}^{\mathcal{I}}$. The result then follows from Proposition~\ref{method1Cross} by noting that  for all $s\in \set{0,1}^{\mathcal{I}}$, we have $\max_{\mathcal{J}\subseteq [k]} h_{\mathcal{J}}(x^s)=h_{\mathcal{I}}\bra{x^s}$.
\qed\end{proof}

\section{Intersection cuts for conic quadratic sets}\label{intquadsec}
In this section we consider intersection cuts for conic quadratic sets of the form $\mathcal{C} \eqq \set{x \in \Realn \,:\, Ax-d \in L^m}$ where $A \in \Real^{m \times n}$, $d \in \Real^m$, and  $L^m$ is the $m$-dimensional Lorentz cone. Note that $\mathcal{C}$ can be written as
\beq \label{cset}
\mathcal{C} = \set{x \in \Realn \,:\, \norms{A_0x - d_0} \le a_m^T x -d _m},
\eeq
where $\bra{A_0,d_0}$ is obtained from $\bra{A,d}$ by deleting the $m$-th row, and $\bra{a_m,d_m}$ is the $m$-th row of $\bra{A,d}$. Using \eqref{cset}, one can rewrite $\mathcal{C}$ as
\beqn
\mathcal{Q} \eqq \set{x \in \Realn \,:\, x^T Q x - 2h^T x + \rho \le 0, \quad a_m^T x - d_m\geq 0},
\eeqn
where $Q = A_0^T A_0 - a_m a_m^T$, $h = A_0^T d_0 - a_m d_m$, and $\rho = d_0^T d_0 - d_m^T d_m$. Also note that $Q \in \Real^{n \times n}$  is symmetric with at most one negative eigenvalue. Using  known classifications of sets described by a quadratic inequality with at most one negative eigenvalue (e.g. see Table~2.1 and the reasoning after the proof of Lemma~2.1 in \cite{belotti2013families}), we have that all conic quadratic sets of the form $\mathcal{C}$ correspond to the following list:
 \begin{enumerate}
	 \item A full dimensional paraboloid, 
	 \item a full dimensional ellipsoid (or a single point), 
	 \item a full dimensional second-order cone,
	 \item one side of a full dimensional hyperboloid of two sheets, 
	 \item a cylinder generated by a lower-dimensional version of one of the previous sets, or
		\item an invertible affine transformation of one of the previous sets.
 \end{enumerate}
 
We first consider split cuts for conic quadratic sets with simple structures  that can be obtained as direct corollaries of Propositions~\ref{GFormSplitCut}, \ref{ghyperprop}, and \ref{boundedSpli}. We then consider t-inclusive and k-branch split cuts for conic quadratic sets that require ad-hoc proofs based on Proposition~\ref{method1Cross}.
As expected, we see that split cut formulas are significantly simpler than those for t-inclusive and k-branch split cuts. However, in either case, it is crucial to exploit the symmetry of the Euclidean norm through the following standard lemma. 
\begin{lemma} \label{norm decomposition}
For $v \in \Real^n$,
$\|x\|_2^2 = \norm{P_v x }_2^2+\norm{P_{v}^{\perp}x}_2^2$.
\end{lemma}

To give formulas for split cuts for all the sets 1--6, it suffices to give formulas for the cases 1--4. With these, we can construct split cut formulas for cylinders using the following lemma, which we prove in the appendix.
\begin{restatable}{lemma}{cylinderlemma}

Let $B\subseteq \Real^n$ be a closed convex set of the form $B_0+L$ where $L$ is a linear subspace, and let $F\subseteq \Real^n$ be a split. If $\pi\in L^\perp$
 and $\conv\bra{B_0 \setminus \Int\bra{F}} = B_0 \cap C$, then $\conv\bra{B \setminus \Int\bra{F}} = \bra{B_0 \cap C} + L$. If  $\pi\notin L^\perp$, then  $\conv\bra{B \setminus \Int\bra{F}} = B$.
 \end{restatable}
 
Finaly, we can  construct split cut formulas for affine transformations by using the following straightforward lemma.
\begin{lemma}\label{basictransformlemma}
Let $B\subseteq \Real^n$ be a closed convex set, $F\subseteq \Real^n$ be a split, and $M: \Realn \rightarrow \Realn$ be an invertible affine mapping. If $\conv\bra{B \setminus \Int\bra{F}} = B \cap C$ for a closed convex set $C \subseteq \Realn$, then
\beqn
\conv\bra{M\bra{B} \setminus \Int\bra{M\bra{F}}} = M\bra{B} \cap M\bra{C}.
\eeqn
\end{lemma}

We note that classification 1--6 is not strictly necessary for constructing split cuts for quadratic sets. In particular, an algorithm introduced in \cite{yildiran2009convex} can be used to obtain an SDP representation of split cuts for any quadratic set (convex or not) without a priori classifying its specific geometry as in 1--6. However, the procedure in \cite{yildiran2009convex} requires the execution of a numerical algorithm to construct split cuts and does not provide closed form expressions of the cuts. Furthermore, such an algorithm requires elaborate algebraic tools specific to   quadratic sets that go far beyond a basic property such as that described by Lemma~\ref{norm decomposition}.  Hence, the objective of the following subsection is not to present the shortest possible constructions of all quadratic split cuts, but to (i) present simple proofs tailored to the specific geometries in classification 1--6 and (ii) present a case study on the power and limitations of the general interpolation approach to split cuts.

\subsection{Split cuts for quadratic sets}\label{simplequasplitcuts}

 Split cuts can be obtained for ellipsoids when interpreted as lower level sets of quadratic or conic functions (i.e., based on the Euclidean norm). Similarly, split cuts can also be characterized for paraboloids and cones that, when interpreted as epigraphs of quadratic or conic functions, are such that $t$ is unaffected by the split disjunctions.
We note that the ellipsoid case has already been proven on \cite{Goez2012,Dadush2011121}, and that the conic case generalizes Proposition~2 in \cite{AN:conicmir} which considers elementary disjunctions for the standard three dimensional second-order cone. 

\begin{corollary}[Split cuts for paraboloids]\label{quadratic coro}
Let $F$ be a split, $A \in \mathbb{R}^{n \times n}$ be an invertible matrix, $ c \in \mathbb{R}^n$, 
\[Q := \set{\bra{x,t} \in \Realnone : \|A\bra{x-c}\|_2^2 \leq t},\]  $
a = \frac{\pi_0 + \pi_1 - 2  \pi^T c }{\|A^{-T}\pi\|_2^2} $,
$
b = - \frac{\bra{\pi_1 - \pi^T c} \bra{\pi_0 - \pi^T c}}{\|A^{-T}\pi\|_2^2},
$
and $\widehat{A}= P_{A^{-T} \pi}^ {\perp} A$. Then $\mybar{Q} = Q \cap C$, where
\[C = \set{\bra{x,t} \in \Realnone:	  \left\lVert \widehat{A} \bra{x-c}\right\rVert_2^2 + a  \pi^T\bra{ x-c } + b \le t  }.\]
\end{corollary}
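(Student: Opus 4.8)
The plan is to recognize $LC^2\bra{B,c}$ as the epigraph of the convex function $F(x)=\normss{B(x-c)}$ --- the constraint $t\geq 0$ being redundant, since $\normss{B(x-c)}\geq 0$ forces $t\geq 0$ whenever $\normss{B(x-c)}\leq t$ --- and then to reduce to the untransformed paraboloid $\tilde F(y)=\normss{y}$ via the affine change of variables $y=B(x-c)$. Under this substitution the split direction transforms cleanly: $\pi^Tx\in[\pi_0,\pi_1]$ becomes $\tilde\pi^Ty\in[\tilde\pi_0,\tilde\pi_1]$ with $\tilde\pi=B^{-T}\pi$, $\tilde\pi_0=\pi_0-\pi^Tc$, and $\tilde\pi_1=\pi_1-\pi^Tc$, which are exactly the quantities appearing in Lemma~\ref{basictransformlemma}. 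So the strategy is to establish the epigraphical split-cut identity for $\tilde F$ in the $\tilde\pi$ coordinates and then transport it back through the epigraphical version of Lemma~\ref{basictransformlemma}; one checks that the transported cut function is $G(x)=\normss{\widehat B(x-c)}+a\pi^T(x-c)+b$, because $\widehat B=P_{B^{-T}\pi}^{\perp}B$ gives $\widehat B(x-c)=P_{\tilde\pi}^{\perp}y$ and $\tilde\pi^Ty=\pi^T(x-c)$.

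For the untransformed paraboloid the key is Lemma~\ref{norm decomposition}, which lets me write $\normss{y}=\normss{P_{\tilde\pi}^{\perp}y}+\normss{P_{\tilde\pi}y}$, where $\normss{P_{\tilde\pi}y}=(\tilde\pi^Ty)^2/\normss{\tilde\pi}$ depends only on $\tilde\pi^Ty$. This exhibits $\tilde F$ in exactly the separable-along-$\tilde\pi$ form $g\bra{P_{\tilde\pi}^{\perp}y}+f\bra{\tilde\pi^Ty}$ required by Proposition~\ref{GFormSplitCut}, with the closed convex functions $g(w)=\normss{w}$ and $f(s)=s^2/\normss{\tilde\pi}$. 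Proposition~\ref{GFormSplitCut} then applies directly and yields $\epi\bra{\tilde F}^{\tilde\pi,\tilde\pi_0,\tilde\pi_1}=\epi\bra{\tilde F}\cap\epi\bra{\tilde G}$, where $\tilde G$ replaces $f\bra{\tilde\pi^Ty}$ by the affine interpolant with coefficients $a'=\bra{f(\tilde\pi_1)-f(\tilde\pi_0)}/(\tilde\pi_1-\tilde\pi_0)$ and $b'=\bra{\tilde\pi_1 f(\tilde\pi_0)-\tilde\pi_0 f(\tilde\pi_1)}/(\tilde\pi_1-\tilde\pi_0)$.

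The only remaining step is a routine algebraic check that these coefficients match the statement. Substituting $f(s)=s^2/\normss{\tilde\pi}$ collapses $a'=(\tilde\pi_0+\tilde\pi_1)/\normss{\tilde\pi}$ and $b'=-\tilde\pi_0\tilde\pi_1/\normss{\tilde\pi}$, and inserting $\tilde\pi_0=\pi_0-\pi^Tc$, $\tilde\pi_1=\pi_1-\pi^Tc$, $\normss{\tilde\pi}=\normss{B^{-T}\pi}$ recovers precisely the $a$ and $b$ of the corollary. I expect no genuine obstacle: the entire conceptual content --- that the quadratic split cut is again a shifted paraboloid with the $P_{\tilde\pi}^{\perp}$ part untouched and the $\tilde\pi$-direction linearized --- is inherited wholesale from the separable interpolation of Proposition~\ref{GFormSplitCut}. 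The points that do require care are purely bookkeeping: confirming $F(x)=\tilde F\bra{B(x-c)}$ and $G(x)=\tilde G\bra{B(x-c)}$ so that Lemma~\ref{basictransformlemma} is legitimately applicable, and correctly propagating the transformation of $\pi,\pi_0,\pi_1$ through the change of variables.
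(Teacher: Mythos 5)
Your proposal is correct and follows essentially the same route as the paper's proof: reduce to $LC^2(I,0)$ via Lemma~\ref{basictransformlemma}, use Lemma~\ref{norm decomposition} to write the standard paraboloid in the separable form $g\bra{P_{\tilde\pi}^{\perp}y}+f\bra{\tilde\pi^Ty}$ with $f(s)=s^2/\normss{\tilde\pi}$, and apply Proposition~\ref{GFormSplitCut}. Your explicit verification of the interpolation coefficients and of the redundancy of $t\geq 0$ only spells out bookkeeping that the paper leaves implicit.
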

\begin{proof}
Note that for the affine mappings $M, M^{-1}$ given by $M(x) = A\bra{x-c}$ and $M^{-1}(x) = A^{-1}x+c$, we have $Q = M^{-1}\bra{Q_0}$ and $Q_0 = M\bra{Q}$, where $Q_0 = \set{(x,t)\in \Realnone\,:\, \normss{x} \le t}$. Using Lemma \ref{basictransformlemma}, we prove the corollary by finding a closed form expression for $\mybar{Q_0}$ where the forbidden set is the split $M\bra{F}$ associated with $\tilde{\pi} = A^{-T}\pi$, $\tilde{\pi}_0 = \pi_0 - \pi^T c$, and $\tilde{\pi}_1 = \pi_1 - \pi^T c$. By Lemma \ref{norm decomposition}, we have
\[Q_0 = \set{\bra{x,t} \in \Realnone : \| P_{\tilde{\pi}}^ {\perp} x \|_2^2 + \frac{(\tilde{\pi}^T x)^2}{\norm{\tilde{\pi}}_2^2} \le t}.\] The result then follows from Proposition~\ref{GFormSplitCut}.
\qed\end{proof}

\begin{corollary}[Split cuts for cones]\label{nlscut prop}
Let $F$ be a split, $A \in \mathbb{R}^{n \times n}$ be an invertible matrix, $ c \in \mathbb{R}^n$, 
\[K := \set{\bra{x,t} \in \Realnone : \|A\bra{x-c}\|_2 \leq t},\] $a = \frac{\pi_1 + \pi_0 -2  \pi^T c }{\pi_1 - \pi_0}
$,
$
b = \frac{-2 \bra{\pi_1 - \pi^T c} \bra{\pi_0 - \pi^T c}}{\pi_1 - \pi_0}
$, $\widehat{A}=\bra{P_{A^{-T} \pi}^ {\perp}+a P_{A^{-T} \pi}}  A$, $\widehat{c}=\bra{b/\norm{A^{-T} \pi}_2^2} A^{-T} \pi$. If $ \pi^T c\notin (\pi_0,\pi_1)$, then $\mybar{K} = K$. Otherwise, $\mybar{K} = K \cap C$, where
 \[C =\set{(x,t)  \in \Realnone:		
 \left\lVert \widehat{A} \bra{x-c}+\widehat{c}\, \right\rVert_2 \le t}.\]
\end{corollary}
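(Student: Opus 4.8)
The plan is to mirror the proof of Corollary~\ref{quadratic coro}, replacing the paraboloid ingredient (Proposition~\ref{GFormSplitCut}) with its positive homogeneous analogue (Proposition~\ref{porder split}) specialized to $p=2$. First I would use Lemma~\ref{basictransformlemma} to strip off the affine map. Writing $LC\bra{B,c}=\epi(F)$ with $F(x)=\norm{B\bra{x-c}}_2$ and $LC\bra{I,0}=\epi(\tilde F)$ with $\tilde F(y)=\norm{y}_2$, we have $F(x)=\tilde F\bra{B\bra{x-c}}$, so it suffices to compute $LC\bra{I,0}^{\tilde\pi,\tilde\pi_0,\tilde\pi_1}$ for $\tilde\pi=B^{-T}\pi$, $\tilde\pi_0=\pi_0-\pi^Tc$, $\tilde\pi_1=\pi_1-\pi^Tc$, and then pull the resulting cut function back through $y=B\bra{x-c}$.

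Second, to put $LC\bra{I,0}$ into the shape required by Proposition~\ref{porder split}, I would apply the orthogonal decomposition of Lemma~\ref{norm decomposition} along $v=\tilde\pi$, giving
\[\norm{y}_2=\bra{\norm{P_{\tilde\pi}^{\perp}y}_2^2+\frac{\bra{\tilde\pi^Ty}^2}{\norm{\tilde\pi}_2^2}}^{1/2}.\]
This is exactly $C_{2,g}$ with $g=\norm{\cdot}_2$, $p=2$, direction $\tilde\pi$, and $\beta=1/\norm{\tilde\pi}_2$. Applying Proposition~\ref{porder split} then splits into two regimes. When $0\notin\bra{\tilde\pi_0,\tilde\pi_1}$, equivalently $\pi^Tc\notin\bra{\pi_0,\pi_1}$, the set is unchanged, which yields the first claim $LC\bra{B,c}^{\pi,\pi_0,\pi_1}=LC\bra{B,c}$. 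When $0\in\bra{\tilde\pi_0,\tilde\pi_1}$, the proposition adds the cut $\bra{\norm{P_{\tilde\pi}^{\perp}y}_2^2+\abs{\tilde a\,\beta\,\tilde\pi^Ty+\beta\tilde b}^2}^{1/2}\le t$ with $\tilde a=(\tilde\pi_1+\tilde\pi_0)/(\tilde\pi_1-\tilde\pi_0)$ and $\tilde b=-2\tilde\pi_1\tilde\pi_0/(\tilde\pi_1-\tilde\pi_0)$.

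The remaining work is bookkeeping. I would recombine the two orthogonal pieces into a single Euclidean norm: since $P_{\tilde\pi}^{\perp}y$ is orthogonal to $\tilde\pi$ and the scalar $\bra{\tilde a\,\tilde\pi^Ty+\tilde b}/\norm{\tilde\pi}_2$ equals the length of the vector $\tilde a P_{\tilde\pi}y+\bra{\tilde b/\norm{\tilde\pi}_2^2}\tilde\pi$, the cut becomes $\norm{\bra{P_{\tilde\pi}^{\perp}+\tilde a P_{\tilde\pi}}y+\bra{\tilde b/\norm{\tilde\pi}_2^2}\tilde\pi}_2\le t$. Substituting $y=B\bra{x-c}$ and $\tilde\pi=B^{-T}\pi$ recovers the stated $\widehat B=\bra{P_{B^{-T}\pi}^{\perp}+aP_{B^{-T}\pi}}B$ and $\widehat c=\bra{b/\norm{B^{-T}\pi}_2^2}B^{-T}\pi$. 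Finally I would verify $\tilde a=a$ and $\tilde b=b$ by substituting $\tilde\pi_i=\pi_i-\pi^Tc$, a one-line simplification.

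I expect the only genuine obstacle to be the algebraic matching in this last step: confirming that the abstract coefficients produced by Proposition~\ref{porder split} in the transformed coordinates coincide with the explicit $a$, $b$, $\widehat B$, $\widehat c$ of the corollary, and in particular that collapsing the orthogonal decomposition back into one norm is legitimate because the cross term vanishes ($P_{\tilde\pi}^{\perp}$ and $P_{\tilde\pi}$ have orthogonal ranges). Everything else is a direct instantiation of results already established, exactly as in the paraboloid case.
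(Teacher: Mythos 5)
Your proposal is correct and follows exactly the paper's own route: reduce to $LC(I,0)$ via Lemma~\ref{basictransformlemma}, decompose the norm via Lemma~\ref{norm decomposition}, and invoke Proposition~\ref{porder split} with $p=2$, $g=\norm{\cdot}_2$, and $\beta=1/\norm{\tilde\pi}_2$. The bookkeeping you flag (checking $\tilde a=a$, $\tilde b=b$, and recombining the orthogonal pieces into a single norm) all goes through as you describe.
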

\begin{proof}
Note that for the affine mappings $M, M^{-1}$ given by $M(x) = A\bra{x-c}$ and $M^{-1}(x) = A^{-1}x+c$, we have $K = M^{-1}\bra{K_0}$ and $K_0 = M\bra{K}$, where $K_0 = \set{(x,t)\in \Realnone\,:\, \norms{x} \le t}$. Using Lemma \ref{basictransformlemma}, we prove the corollary by finding a closed form expression for $\mybar{K_0}$ where the forbidden set is the split $M\bra{F}$ associated with $\tilde{\pi} = A^{-T}\pi$, $\tilde{\pi}_0 = \pi_0 - \pi^T c$, and $\tilde{\pi}_1 = \pi_1 - \pi^T c$. By Lemma \ref{norm decomposition}, we have 
\[K_0 = \set{\bra{x,t} \in \Realnone : \bra{\| P_{\tilde{\pi}}^ {\perp} x \|_2^2 + \frac{(\tilde{\pi}^T x)^2}{\norm{\tilde{\pi}}_2^2}}^{1/2} \le t}.\] The result then follows from Corollary~\ref{porder split}.
\qed
\end{proof}

A particularly interesting application of Corollaries~\ref{quadratic coro} and \ref{nlscut prop} is the Closest Vector Problem \cite{MGbook}, which can be alternatively written as $\min\set{\norm{A\bra{x-c}}_2^2\,:\, x\in \mathbb{Z}^n}$ or $\min\set{\norm{A\bra{x-c}}_2\,:\, x\in \mathbb{Z}^n}$. In turn, these problems can be reformulated as
\[ \min\set{t\,:\, \bra{x,t}\in Q,\,x\in \mathbb{Z}^n} \mbox{\quad and \quad } \min\set{t\,:\, \bra{x,t}\in K,\, x\in \mathbb{Z}^n},\]
respectively. We can then use Corollaries~\ref{quadratic coro} and \ref{nlscut prop} with lattice free splits to construct split cuts that could improve the solution speed of these problems. We are currently studying the effectiveness of such cuts.

We can also obtain as a corollary the following result from \cite{Goez2012,Dadush2011121}.

\begin{corollary}[Split cuts for ellipsoids] \label{ellip coro}
Let $F$ be a split, $A \in \mathbb{R}^{n \times n}$ be an invertible matrix, $ c \in \mathbb{R}^n$, $r \in \Realp$,
\[E := \set{x\in \mathbb{R}^n : \|A\bra{x-c}\|_2 \leq r},\]  $f(u):=-\sqrt{r^2-\frac{u^2}{\|A^{-T} \pi \|_2^2  }}$, $a = \frac{f(\pi_0-\pi^T c)-f(\pi_1-\pi^T c)}{\pi_1 - \pi_0}$, and \[b = \frac{\bra{\pi_1-\pi^T c} f(\pi_0-\pi^T c) - \bra{\pi_0-\pi^T c} f(\pi_1-\pi^T c)}{\pi_1- \pi_0}.\]
If $\pi^T c-r \norm{A^{-T}\pi}_2\leq {\pi_0 < \pi_1}\leq \pi^T c+r \norm{A^{-T}\pi}_2$, then $\mybar{E} = E \cap C$, where
\bear\label{splitproper}
C = \set{ x \in \Realn : 	\| P_{A^{-T} \pi}^ {\perp} A \bra{x-c} \|_2 \le  a \pi^T (x-c) - b },\,\quad
\eear
{if $\pi_0 < \pi^T c-r \norm{A^{-T}\pi}_2 < \pi_1\leq \pi^T c+r \norm{A^{-T}\pi}_2$, then}
\bear\label{plitcg1}
\mybar{E} = \set{  x \in E :  \pi^T x\geq \pi_1 },\eear
{if $\pi^T c-r \norm{A^{-T}\pi}_2\leq \pi_0 < \pi^T c+r \norm{A^{-T}\pi}_2 < \pi_1$, then}
\bear\label{plitcg2}
\mybar{E} = \set{  x \in E : \pi^T x\leq \pi_0 },\eear
{if $\pi^T c-r \norm{A^{-T}\pi}_2 \ge \pi_1$ or $\pi_0 \ge \pi^T c+r \norm{A^{-T}\pi}_2$, then $\mybar{E} = E$}, and otherwise, $\mybar{E} = \emptyset$.
\end{corollary}

\begin{proof}
Note that for the affine mappings $M, M^{-1}$ given by $M(x) = A\bra{x-c}$ and $M^{-1}(x) = A^{-1}x+c$, we have $E = M^{-1}\bra{E_0}$ and $E_0 = M\bra{E}$, where $E_0 = \set{(x,t)\in \Realnone\,:\, \norms{x} \le r}$. Using Lemma \ref{basictransformlemma}, we prove the corollary by finding a closed form expression for $\mybar{E_0}$ where the forbidden set is the split $M\bra{F}$ associated with $\tilde{\pi} = A^{-T}\pi$, $\tilde{\pi}_0 = \pi_0 - \pi^T c$, and $\tilde{\pi}_1 = \pi_1 - \pi^T c$. By Lemma \ref{norm decomposition}, we have 
\[E_0 = \set{x \in \mathbb{R}^n : \| P_{\tilde{\pi}}^ {\perp} x \|_2 - \sqrt{r^2 - \frac{(\tilde{\pi}^T x)^2}{\norm{\tilde{\pi}}_2^2}} \le 0}.\]
The result then follows from Proposition~\ref{boundedSpli}.

The other cases can be shown by studying when the ellipsoid is partially or completely contained in one side of the disjunction, or when it is completely contained strictly between the disjunction. 
\qed\end{proof}
We note that Corollary \ref{ellip coro} shows there are two types of  split cuts for $E$. In \eqref{splitproper}, we obtain a nonlinear split cut that we would expect from Proposition~\ref{boundedSpli}, while in \eqref{plitcg1}--\eqref{plitcg2} we obtain simple linear split cuts. These linear inequalities are  actually Chv\'atal-Gomory (CG) cuts for $E$ \cite{Chvatal73,DBLP:journals/mor/DadushDV11,DBLP:conf/ipco/DadushDV11,DBLP:conf/ipco/DeyV10,Gomory58}, but  they are still sufficient to describe $\mybar{E}$ together with the original constraint.  We hence follow the same MILP convention used in \cite{Dadush2011121} and still consider them  split cuts. Note that we can also consider ``CG split cuts'' in Proposition~\ref{boundedSpli} if we include additional structure on the functions such as $g$ being non-negative. Similarly, we can also do the case analysis for CG cuts in Corollary~\ref{p-order ball split cut}.

\begin{proposition}[Split cuts for hyperboloids]\label{simplehyper}
Let $F$ be a split, $l \in \Real\setminus \set{0}$,
\[H := \set{\bra{x,t} \in \Realnone : \sqrt{\normss{x} + l^2} \leq t},\]
$a = \frac{\sqrt{l^2 \normss{\pi} + \pi_1^2} - \sqrt{l^2 \normss{\pi} + \pi_0^2}}{\pi_1 - \pi_0}$, and
$b = \frac{\pi_1 \sqrt{l^2 \normss{\pi} + \pi_0^2} - \pi_0 \sqrt{l^2 \normss{\pi} + \pi_1^2}}{\pi_1 - \pi_0}$. Then $\mybar{H} = H \cap C$, where
\[C = \set{\bra{x,t} \in \Realnone : \norms{P_{\pi}^{\perp} x + \frac{a\pi^T x + b}{\normss{\pi}} \pi} \le t}.\]
\end{proposition}

\begin{proof}
Direct from Proposition~\ref{ghyperprop} by noting that
\beqn
H := \set{\bra{x,t} \in \Realnone : \sqrt{\normss{P_{\pi}^ {\perp}} + \frac{\bra{\pi^T x}^2}{\normss{\pi}} + l^2} \leq t}.
\eeqn
\qed
\end{proof}
%
%

\subsection{t-inclusive split cuts for quadratic sets}\label{complicated}

The split cut formulas in this section are significantly more complicated. For this reason, we only present them for standard sets (i.e., with $A=I$ and $c=0$). Formulas for the general case may be obtained by combining the formulas for the standard case with Lemma~\ref{basictransformlemma}.

\begin{restatable}{proposition}{tparaboloidprop}\textnormal{\textbf{(t-inclusive split cuts for paraboloids)}}\label{newpnlscut prop}
Let $F$ be a t-inclusive split and \[Q_0 := \{\bra{x,t} \in \Realnone : \|x\|_2^2 \leq t\}.\] If $\phat > 0$ and $\pi_1 \le \frac{-\normss{\pi}}{4\phat}$, or if $\phat < 0$ and $\frac{-\normss{\pi}}{4\phat} \le \pi_0$, then \[\mybar{Q_0} = Q_0,\]
if $\phat > 0$ and $\pi_0 < \frac{-\normss{\pi}}{4\phat} < \pi_1$, then \[\mybar{Q_0} = \set{(x, t) \in Q_0 : \pi^T x + \phat t \ge \pi_1},\]
if $\phat < 0$ and $\pi_0 < \frac{-\normss{\pi}}{4\phat} < \pi_1$, then \[\mybar{Q_0} = \set{(x, t) \in Q_0: \pi^T x + \phat t \le \pi_0},\] and if $\phat > 0$ and $\frac{-\normss{\pi}}{4\phat} \le \pi_0$, or if $\phat < 0$ and $\pi_1 \le \frac{-\normss{\pi}}{4\phat}$, then $\mybar{Q_0} = Q_0 \cap C$, where
\begin{equation*}\label{generalpa}
C = \set{ (x,t) \in \Realnone : \norm{P_\pi^\perp x + \frac{\pi^T x + b }{\norm{\pi}_2^2} \pi}_2 \le c \pi^T x + d t + e},
\end{equation*}
for
\bearn
&&b = \frac{\norm{\pi}_2^2}{2 \hat{\pi}} \\
&&c = \frac{ f }{\sqrt{2} \bra{\pi_1 - \pi_0}\hat{\pi}} \\
&& d = c \phat \\
&&e=  \frac{ \normss{\pi} + \sqrt{\normss{\pi} + 4 \pi_0 \hat{\pi}} \sqrt{\normss{\pi} + 4 \pi_1 \hat{\pi}} }{4 \sqrt{2} \bra{\pi_1 - \pi_0} \hat{\pi}^2} f\\
&&f = \sqrt{\norm{\pi}_2^2 + 2 \bra{\pi_0 + \pi_1} \hat{\pi} - \sqrt{\norm{\pi}_2^2 + 4 \pi_0 \hat{\pi}} \sqrt{\norm{\pi}_2^2 + 4 \pi_1 \hat{\pi}}},
\eearn
where we use the convention $0/0 \eqq 0$ for the case $\norms{\pi}=0$.
 \end{restatable}
\begin{proof}
See appendix.
\qed
\end{proof}

 
\begin{restatable}{proposition}{tconeprop}\textnormal{\textbf{(t-inclusive split cuts for cones)}}\label{newnlscut prop}
Let $F$ be a t-inclusive split and
\[K_0 := \{\bra{x,t} \in \Realnone : \|x\|_2 \leq t\}.\]  If $0 \notin (\pi_0, \pi_1)$, then $\mybar{K_0} = K_0$. Otherwise, if $0 \in (\pi_0, \pi_1)$ and $\hat{\pi} \le - \norm{\pi}_2$, then \[\mybar{K_0} = \set{ (x, t) \in K_0 : \pi^T x + \hat{\pi} t \le \pi_0},\] if $0 \in (\pi_0, \pi_1)$ and $\hat{\pi} \ge \norm{\pi}_2$, then \[\mybar{K_0} = \set{ (x, t) \in K_0 : \pi^T x + \hat{\pi} t \ge \pi_1},\] and if $0 \in (\pi_0, \pi_1)$ and $\hat{\pi} \in \bra{-\norm{\pi}_2, \norm{\pi}_2}$, then $\mybar{K_0} = K_0 \cap C$, where
\[C = \set{ (x, t) \in \Realnone : \norm{P_\pi^\perp x + \frac{a \pi^T x + b }{\norm{\pi}_2^2} \pi}_2 \le c \pi^T x + d t + e},\] where
\bearn
&&a = \frac{ \bra{\pi_0 + \pi_1} \bra{\norm{\pi}_2^2 - \hat{\pi}^2} } { f } \\
&&b = - \frac{ 2 \pi_0 \pi_1 \norm{\pi}_2^2} { f } \\
&&c = - \frac{ 4 \pi_0 \pi_1 \hat{\pi} }{ \bra{\pi_1 - \pi_0} f} \\
&&d = \frac{ f }{ \bra{\pi_1 - \pi_0} \bra{\normss{\pi} - \phat^2} }\\
&&e= \frac{ 2 \pi_0 \pi_1 \bra{\pi_0 + \pi_1} \hat{\pi} }{ \bra{\pi_1 - \pi_0} f }\\
&& f = \sqrt{ \bra{\norm{\pi}_2^2 - \hat{\pi}^2} \bra{ \norm{\pi}_2^2 \bra{\pi_1 - \pi_0}^2 - \hat{\pi}^2 \bra{\pi_0 + \pi_1}^2} }.
\eearn
 \end{restatable}
\begin{proof}
See appendix.
\qed
\end{proof}

With regards to the general interpolation forms of the obtained split cuts in Sections 4.1 and 4.2, we note that these fall into two categories. The first category corresponds to the case in which the intersection of the boundary of the split and the base set is bounded such as when the base set is an ellipsoid. In such case, the obtained split cuts are always an ellipsoidal cylinder or a conic set. The second category corresponds to the case in which the intersection of the boundary of the split and the base set is unbounded. In such case, the obtained split cut is of the same form as the base set. For instance,  split cuts for conic sets or sections of conic sets are conic. An nice illustration of this dichotomy is the case of paraboloids, where t-inclusive splits have bounded intersections and yield conic cuts, while splits that are not t-inclusive have unbounded intersections and yield parabolic cuts.

Finally, we note that the only formulas that we did not explicitly characterize here are t-inclusive split cuts for affine transformations of paraboloids and cones, split cuts for affine transformation of hyperboloids, and t-inclusive split cuts for hyperboloids and their affine transformations. All such formulas can be obtained using Lemma~\ref{basictransformlemma}, except t-inclusive split cuts for hyperboloids. We can still obtain  formulas  for t-inclusive split cuts for hyperboloids using the interpolation technique; however, the resulting formulas are significantly more involved and no longer fit the  ``simple'' formulas theme of the paper. However, the analysis so far is still a significant generalization of what is known for split cuts for conic quadratic sets. In fact, the most general alternative that we are aware of is the concurrently developed technique in \cite{Kent}, which consider conic sets of the form $\set{x \in \Realn \,:\, Ax-d \in L^m}$ for a full rank matrix $A$,  which we do not require. When $A$ does not have full row rank, it is possible to consider a full row rank submatrix of $A$ and use this relaxation to generate the cuts from \cite{Kent}. However, as noted in Example~1 of \cite{Kent}, this approach fails to give split cuts for hyperboloids which we can obtain from Proposition~\ref{simplehyper} and Lemma~\ref{basictransformlemma}. Nevertheless, one advantage of the approach in \cite{Kent} is the use of a more systematic procedure to obtain the interpolation coefficients, which can be particularly useful when constructing t-inclusive split cuts. For instance, in Proposition~\ref{newnlscut prop} we obtain the interpolation coefficients through the heuristic procedure described in Section~\ref{tincsec}, which required guessing the interpolation form of the split cut and was not guaranteed to be successful even if this guess was accurate. In contrast, the approach in \cite{Kent} only assumes that the split cut is a polynomial inequality and calculates the coefficients of the associated polynomial through a systematic use of techniques from algebraic geometry. The conversion of this polynomial inequality to a conic quadratic inequality is an ad-hoc procedure that might be limited to quadratic cones. However, the construction of the initial polynomial inequality seems to have a higher chance of being extended to higher order cones or more general semi-algebraic sets than the approach in  Section~\ref{tincsec}. In contrast, when we consider split disjunctions that are not t-inclusive, the approach from Section~\ref{ENSF Section} has an advantage as it is not restricted to semi-algebraic sets.

\subsection{k-branch split cuts for quadratic sets}

Similarly to Corollary~\ref{quadratic coro}, we can use the following direct generalization of Lemma~\ref{norm decomposition} to get formulas for several families of k-branch split cuts for convex quadratic sets. 

\begin{lemma} \label{norm decompositioncross}
	Let $\set{\pi_i}_{i=1}^k \subseteq \Real^n\setminus\set{0_n}$ be such that $\pi_i \perp \pi_j$ for every $i\neq j$ and $P_{\Pi}^ {\perp} := I - \sum_{i=1}^k\frac{\pi_i \pi_i^T}{\|\pi_i \|_2^2}$. Then for any $v \in \Real^n$
	we have \[\|x\|_2^2 =\norm{P_{\Pi}^{\perp}x}_2^2 +\sum_{i=1}^k\frac{\bra{\pi^T_i x}^2}{\norm{\pi_i}^2_2}.\]
\end{lemma} 
 The following corollary generalizes the result of Corollary~\ref{quadratic coro} to the case of k-branch split cuts for paraboloids.

\begin{corollary}[k-branch split cuts for paraboloids]\label{quadratic corocross}
	Let $A \in \mathbb{R}^{n \times n}$ be an invertible matrix, $ c \in \mathbb{R}^n$, and
\beqn
Q := \set{\bra{x,t} \in \Realnone : \|A\bra{x-c}\|_2^2 \leq t}.
\eeqn
Also let $F$ be a k-branch split such that $A^{-T}\pi_i \perp A^{-T} \pi_j$ for every $i\neq j$,
$a_i = \frac{\pi_0^i + \pi_1^i - 2  \pi_i^T c }{\|A^{-T}\pi_i\|_2^2} $ and
$b_i = - \frac{\bra{\pi_1^i - \pi^T_i c} \bra{\pi_0^i - \pi^T_i c}}{\|A^{-T}\pi_i\|_2^2}$
for all $i\in [k]$,  and for every $\mathcal{I}\subseteq [k]$ let
\[h_{\mathcal{I}}(x):=\left\lVert \bra{A - \sum_{i \in \mathcal{I}}\frac{A^{-T}\pi_i \pi_i^T}{\|A^{-T}\pi_i \|_2^2}}\bra{x-c}\right\rVert_2^2+ \sum_{i\in \mathcal{I}} a_i\pi_i^T \bra{x-c}+ b_i.\]
	 Then $\mybar{Q} = Q \cap C$, where
	 \[ C = \set{\bra{x,t} \in \Realnone \,:\, \max_{\mathcal{I}\subseteq [k]} h_{\mathcal{I}}(x) \le t  }.\]
\end{corollary}
\begin{proof}
Note that for the affine mappings $M, M^{-1}$ given by $M(x) = A\bra{x-c}$ and $M^{-1}(x) = A^{-1}x+c$, we have $Q = M^{-1}\bra{Q_0}$ and $Q_0 = M\bra{Q}$, where $Q_0 = \set{(x,t)\in \Realnone\,:\, \normss{x} \le t}$. Using Lemma \ref{basictransformlemma}, we prove the corollary by finding a closed form expression for $\mybar{Q_0}$ where the forbidden set is a k-branch split $M\bra{F}$ associated with $\tilde{\pi}_i=A^{-T} \pi_i$, $\tilde{\pi}_0^i=\pi_0^i-\pi^T_i c$, and $\tilde{\pi}_1^i=\pi_1^i-\pi^T_i c$ for $i \in [k]$. 
By Lemma \ref{norm decompositioncross}, we have
\beqn
Q_0 = \set{\bra{x,t} \in \Realnone : \norm{P_{\tilde{\Pi}}^{\perp}x}_2^2 +\sum_{i=1}^k\frac{\bra{\tilde{\pi}^T_i x}^2}{\norm{\tilde{\pi}_i}^2_2} \le t.
}
\eeqn
The result then follows from Proposition \ref{GFormSplitCutCross}.
\qed\end{proof}


\section{General intersection cuts through aggregation}\label{intersection_cut_section}

In this section we consider the case in which the base sets are either epigraphs or lower level sets of convex functions and the forbidden sets are hypographs or upper level sets of concave functions. Our cut construction approach in this case is based on a simple aggregation technique, which again can be more naturally explained for epigraphs of specially structured functions. Following the structure of Section~\ref{NSCSec}, we also begin by studying the epigraphical sets and then consider the case of non-epigraphical sets. We end this section by illustrating the power and limitations of the aggregation approach by considering intersection cuts for quadratic sets.

\subsection{Intersection cuts for epigraphs}
Let $G,J:\Real\times \Real \to \Real$ be a convex and a concave function given by $G(z,y)=z^2+2 y^2$ and $J(z,y)=-(z-1)^2+1-y^2$, and let $B = \epi\bra{G} \text{ and } F = \hyp(J)$.
For $\lambda\in[0,1]$, let $W_\lambda(z,y)=(1-\lambda)G+\lambda J$. As illustrated in Figure~\ref{aggcutfig1}, for any $\lambda\in[0,1]$, we have that  $W_\lambda(z,y)\leq t$ is a binding valid cut for $\mybar{B}$. In Figure~\ref{aggcutfig1}, the graph of $G$ is given by the thick black curve, graph of $J$ by the thin blue curve, and valid aggregation cuts $W_\lambda$ for $\lambda\in \{1/4,1/2,3/4\}$ by the red dotted, green dash-dotted, and brown dashed curves, respectively.
Figure~\ref{aggcutfig1} illustrates that, depending on the choice of $\lambda$, the inequality could be non-convex, or it could be convex but not sufficient. It is clear from  the figure that, in this case, the correct choice of $\lambda$ is  $1/2=\arg\max\set{\lambda\in [0,1]\,:\, W_\lambda \text{ is convex}}$, which yields the strongest convex cut from this class. Furthermore, as illustrated in Figure~\ref{aggcutfig2}, we have that for any $\bra{\overline{z},\overline{y},\overline{t}}\in \epi(W_{1/2})\cap \Int\bra{F}$, we can find friends in $\epi\bra{W_{1/2}}\cap\bd\bra{F}$ by following the slope of $W_{1/2}$ similar to what we did in Section~\ref{SepFunSplit} for split cuts of separable functions. We can then show that 
 \begin{equation*}
	 \mybar{B} = B \cap\epi\bra{W_{1/2}}.
\end{equation*}
A similar construction can also be obtained if we instead study $\conv\bra{\set{(z,y,t)\in \epi(G)\,:\, J(z,y)\leq 0}}$.
\begin{figure}[htb]
\centering 
\subfigure[Various aggregations of $G$ and $J$.]{\includegraphics[scale=0.5]{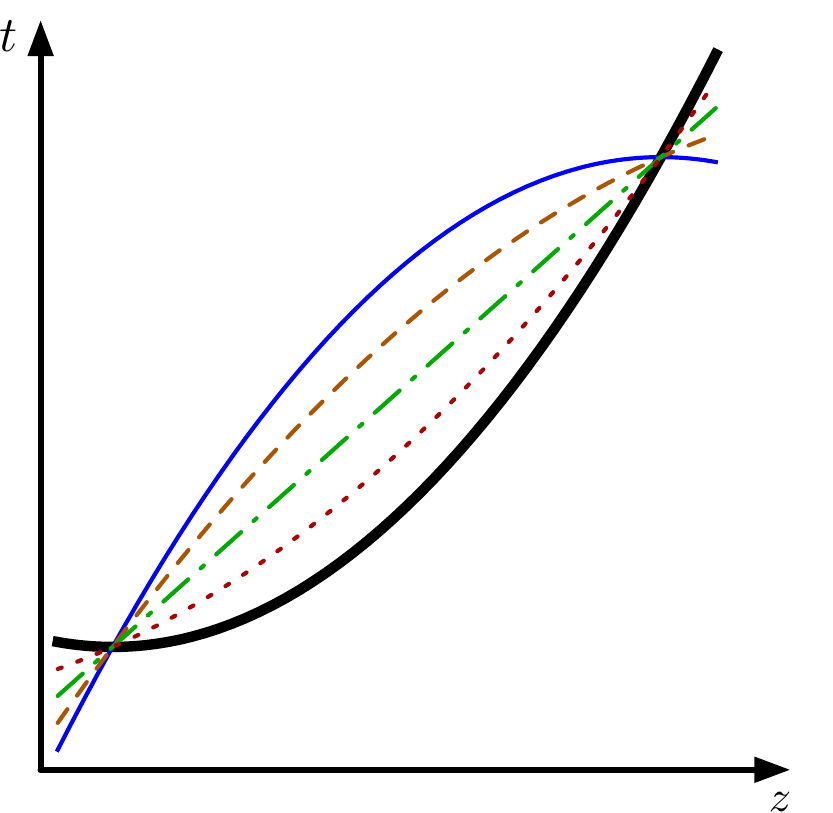}\label{aggcutfig1}}
\subfigure[Friends construction by following slope of $W_{1/2}$.]{\includegraphics[scale=0.5]{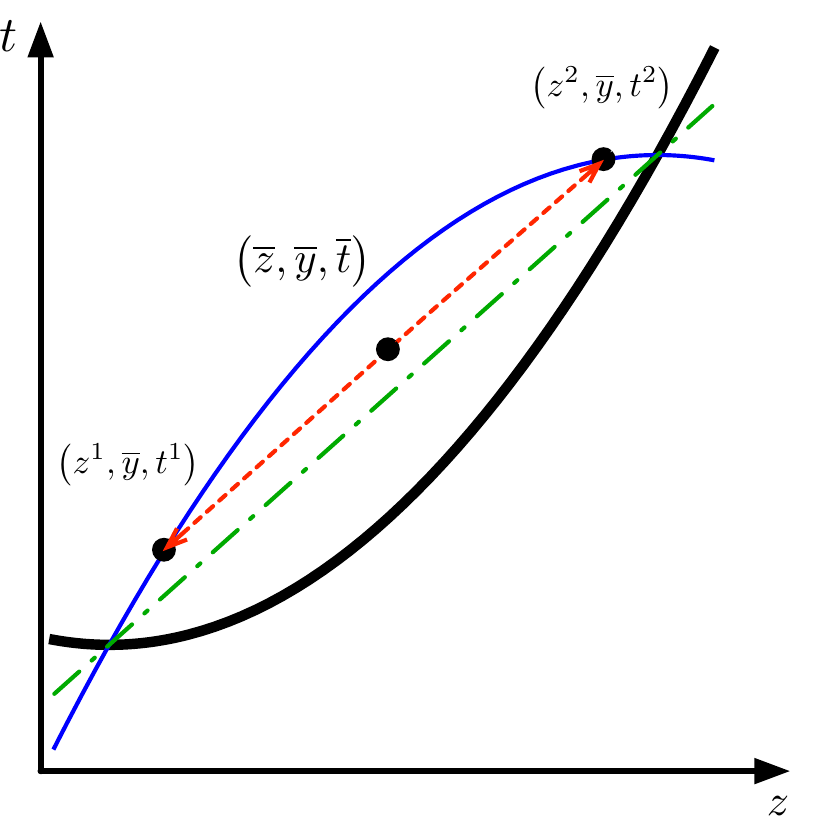}\label{aggcutfig2}}
\caption{Cuts from aggregation.}\label{aggcutfig}
\end{figure}

$W_\lambda$ and the convexity requirement on it are the basis of many techniques such as Lagrangian/SDP relaxations of quadratic programming problems \cite{fujie1997semidefinite,oustry2001sdp,polik2007survey,poljak1995recipe}, the QCR method for integer quadratic programming \cite{billionnet2012extending,billionnet2009improving}, and an algorithm for constructing projected SDP representations of the convex hull of quadratic constraints introduced in  \cite{yildiran2009convex}. It is hence not surprising that the approach works in the quadratic case. However, as shown in \cite{yildiran2009convex}, even in the quadratic case the approach can fail to yield convex constraints or closed form expressions. Furthermore, for general functions, $W_\lambda$ can easily be non-convex for every $\lambda$. Fortunately, as the following proposition shows, the aggregation approach can yield closed form expressions for general intersection cuts for problems with special structures.

 \begin{proposition}\label{generalintprop} Let $g_i:\Real \to\Real$ be convex functions for each $i\in[n]$, $m,h\in\Real^n$, $r,q\in \Real$, and $\gamma\in \Real_+$. Furthermore, let $\set{a_i}_{i=1}^n\subseteq \Real^n$ be such that $a_n\neq 0_n$ and $a_i \perp a_j$ for every $i\neq j$, and $\set{\alpha_i}_{i=1}^n\subseteq \Real_+$ be such that $0\neq \alpha_n\geq \alpha_i$ for all $i$.  Let 
	 \begin{align*}
		 G(x)=&\sum_{i=1}^n g_i\bra{a_i^Tx}+m^Tx+r,\\
		 J(x)=&-\sum_{i=1}^n \alpha_i g_i\bra{a_i^Tx}-h^Tx-q,
	 \end{align*}
	 $B:=\epi(G)$, and $F:=\set{\bra{x,t}\in\Realnone\,:\,  \gamma t\leq J(x)}$.  If $\bra{1+\gamma/\alpha_n}>0$ and

	 \begin{equation}\label{intersectlimcondition}
		 \lim_{|s|\to \infty} -\alpha_n g_n\bra{ s a_n^Ta_n} - s\bra{h^Ta_n+\gamma \frac{\bra{m-h/\alpha_n}^T a_n}{1+\gamma/\alpha_n}}=-\infty,
	 \end{equation}
	 then 
	 \begin{equation}\label{intersect1}
		 \mybar{B}=	 \conv\bra{\set{\bra{x,t}\in \epi(G)\,:\, J(x)\leq \gamma t}}=\epi(G)\cap \epi(W),
	 \end{equation}
	 where
	 \begin{equation}
		 W(x):=\frac{G(x)+(1/\alpha_n)J(x)}{1+\gamma/\alpha_n}=\frac{\sum_{i=1}^{n-1} \bra{1-\alpha_i/\alpha_n} g_i\bra{a_i^Tx}+\bra{m-h/\alpha_n}^Tx+(r-q/\alpha_n)}{\bra{1+\gamma/\alpha_n}}.
	 \end{equation}
 \end{proposition}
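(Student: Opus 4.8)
The plan is to mirror the proof of Proposition~\ref{method1propB}, establishing the two inclusions $Q\subseteq\epi(F)\cap\epi(H)\subseteq\conv(Q)$ for $Q:=\set{\bra{x,t}\in\epi(F):G(x)\le\gamma t}$ and then taking convex hulls, using that $\epi(F)\cap\epi(H)$ is convex. First I would dispose of the leftmost equality in \eqref{intersect1}: since $G$ is continuous (being a finite concave function), $\Int\bra{S}=\set{\bra{x,t}:\gamma t<G(x)}$, so $C\setminus\Int\bra{S}=Q$ as sets and their convex hulls coincide. Convexity of $H$ is equally direct: each coefficient $1-\alpha_i/\alpha_n$ is nonnegative because $\alpha_n\ge\alpha_i$, and $\bra{1+\gamma/\alpha_n}>0$, so $H$ is a nonnegative combination of the convex $g_i$ precomposed with linear maps plus an affine term, divided by a positive scalar.

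Validity, $Q\subseteq\epi(F)\cap\epi(H)$, follows immediately from the aggregation identity $\bra{1+\gamma/\alpha_n}H(x)=F(x)+\bra{1/\alpha_n}G(x)$: if $F(x)\le t$ and $G(x)\le\gamma t$, multiplying the second inequality by $1/\alpha_n>0$ and adding the first yields $\bra{1+\gamma/\alpha_n}H(x)\le\bra{1+\gamma/\alpha_n}t$, hence $H(x)\le t$, and $Q\subseteq\epi(F)$ holds by definition.

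The substance of the argument is the friends construction proving $\epi(F)\cap\epi(H)\subseteq\conv(Q)$. Given $\bra{\xbar,\tbar}\in\epi(F)\cap\epi(H)$ not already in $Q$ — that is, with $G(\xbar)>\gamma\tbar$ — I would move along direction $a_n$ while following the slope of $H$: set $x(s)=\xbar+s a_n$ and $t(s)=\tbar+\sigma s$ with $\sigma=\bra{m-l/\alpha_n}^Ta_n/\bra{1+\gamma/\alpha_n}$. The crucial structural fact is that the $g_n$ term has been \emph{eliminated} from $H$, so by orthogonality of the $a_i$ the function $H$ is affine along this line with exactly slope $\sigma$; hence $H(x(s))=H(\xbar)+\sigma s\le\tbar+\sigma s=t(s)$, and the whole line stays in $\epi(H)$. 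The hypothesis \eqref{intersectlimcondition} is precisely the statement that $G(x(s))-\gamma t(s)\to-\infty$ as $\abs{s}\to\infty$ (up to a harmless shift in the argument of $g_n$ that does not affect the divergence of a convex function), so since $G(\xbar)-\gamma\tbar>0$ at $s=0$, the intermediate value theorem produces $s_-<0<s_+$ with $G(x(s_\pm))=\gamma t(s_\pm)$.

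The cleanest step closes the argument: these two boundary points lie in $Q$ for free. On the surface $\set{G(x)=\gamma t}$ the aggregation identity reduces the inequality $H(x)\le t$ to $F(x)\le t$ — this is where $\bra{1+\gamma/\alpha_n}>0$ is needed to preserve the direction of the inequality — so membership in $\epi(H)$ forces membership in $\epi(F)$, giving $\bra{x(s_\pm),t(s_\pm)}\in\epi(F)\cap\set{G\le\gamma t}=Q$. Since $x(s)$ and $t(s)$ are affine in $s$ and $s=0$ lies strictly between $s_-$ and $s_+$, the point $\bra{\xbar,\tbar}$ is a convex combination of these two friends, so $\bra{\xbar,\tbar}\in\conv(Q)$; the conclusion then follows exactly as in Proposition~\ref{method1propB}. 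I expect the main obstacle to be verifying that following the slope of $H$ keeps the segment inside $\epi(H)$ while \eqref{intersectlimcondition} drives $\gamma t-G$ positive at both ends — the naive alternative of holding $\tbar$ fixed and merely wiggling $a_n^Tx$ can leave $\epi(F)$, just as in the split-cut discussion — whereas coupling ``slope of $H$'' with the identity on $\set{G=\gamma t}$ is exactly what makes the friends land in $\epi(F)$ automatically.
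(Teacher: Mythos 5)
Your proposal is correct and follows essentially the same route as the paper's proof: validity from the nonnegative aggregation identity, and sufficiency by constructing friends along $a_n$ with the $t$-slope $k=\bra{m-l/\alpha_n}^Ta_n/\bra{1+\gamma/\alpha_n}$, using \eqref{intersectlimcondition} to guarantee the line meets $\set{G(x)=\gamma t}$ on both sides and the identity $F+G/\alpha_n=\bra{1+\gamma/\alpha_n}H$ to conclude $F\le t$ there. You merely spell out two steps the paper leaves implicit (why the line stays in $\epi(H)$, namely that $H$ is affine along $a_n$ by orthogonality, and why the shift in the argument of $g_n$ is harmless), which is a faithful elaboration rather than a different argument.
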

 \begin{proof}
 The first equality in \eqref{intersect1} is direct. For the second equality, we proceed as follows. 
	  $W$ is a non-negative linear combination of $G$ and $J$ that is also a convex function from which it is easy to see that  the left to right containment  holds.

	  To show the right to left containment, let $\bra{\overline{x},\overline{t}}\in \epi(G)\cap \epi(W)$ be such that $J\bra{\overline{x}}> \gamma \overline{t}$. Let $k= \frac{\bra{m-h/\alpha_n}^Ta_n}{1+\gamma/\alpha_n}$. Because of \eqref{intersectlimcondition}, there exits $s_1>0$ and $s_2<0$, for which $\bra{x^i,t^i}= \bra{\overline{x}+s_i a_n,\overline{t}+s_i k}$ for $i=1,2$ are such that $J\bra{x^i}= \gamma t^i$. Furthermore, by design, $\bra{x^i,t^i}\in \epi(W)$ for $i=1,2$ which implies $G\bra{x^i}+J\bra{x^i}/\alpha_n\leq \bra{1+\gamma/\alpha_n} t^i$ and hence $G\bra{x^i}\leq  t^i$. The result then follows by noting that $ \bra{\overline{x},\overline{t}}\in\conv\bra{\set{\bra{x^1,t^1},\bra{x^2,t^2}}}$.
 \qed\end{proof}
 \subsection{Intersection cuts for level sets}
 
We can extend the aggregation approach to  certain non-epigraphical sets through the following proposition whose proof is a direct analog to that of Proposition~\ref{generalintprop}. 

 \begin{proposition}\label{boundinterprop} Let $g_i:\Real \to\Real$ be convex functions for each $i\in[n]$, $m \in\Real^n$, $r,q\in \Real$. Furthermore, let $\set{a_i}_{i=1}^n\subseteq \Real^n$ be such that $a_n\neq 0_n$ and $a_i \perp a_j$ for every $i\neq j$, and $\set{\alpha_i}_{i=1}^n\subseteq \Real_+$ be such that $0\neq \alpha_n\geq \alpha_i$ for all $i$.  Let 
	 \begin{align*}
		 G(x)&=\sum_{i=1}^n g_i\bra{a_i^Tx}+m^Tx+r,\\
		 J(x)&=-\sum_{i=1}^n \alpha_i g_i\bra{a_i^Tx}-\alpha_n m^Tx-q,
	 \end{align*}
	 $B:=\set{x\in\Real^n\,:\,  G(x)\leq 0}$, and $F:=\set{x\in\Real^n\,:\,  J(x)\geq 0}$. If
	 \begin{equation}
		 \lim_{|s|\to \infty} -\alpha_n g_n\bra{ s a_n^Ta_n} - s\alpha_n m^Ta_n=-\infty,
	 \end{equation}
	 then 
	 \begin{equation}	\mybar{B}=	 \conv\bra{\set{x\in \Real^n\,:\, \begin{aligned}G(x)&\leq 0,\\J(x)&\leq 0\end{aligned}}}=\set{x\in \Real^n\,:\, \begin{aligned}G(x)&\leq 0,\\W(x)&\leq 0\end{aligned}},
	 \end{equation}
	 where
	 \begin{equation}
		 W(x):=G(x)+(1/\alpha_n)J(x)=\sum_{i=1}^{n-1} \bra{1-\alpha_i/\alpha_n} g_i\bra{a_i^Tx}+(r-q/\alpha_n).
	 \end{equation}
 \end{proposition}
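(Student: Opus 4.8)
The plan is to mirror the proof of Proposition~\ref{generalintprop}, replacing the epigraphical friends construction with one that moves purely in $x$-space along the direction $a_n$. First I would dispose of the first equality: since $S=\set{x\,:\,G(x)\geq 0}$ we have $\Int(S)=\set{x\,:\,G(x)>0}$, so that $C\setminus\Int(S)=\set{x\,:\,F(x)\leq 0}\cap\set{x\,:\,G(x)\leq 0}=\set{x\,:\,F(x)\leq 0,\,G(x)\leq 0}$ directly from the definitions of $C$ and $S$.

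For the substantive second equality I would first record that $H=F+(1/\alpha_n)G$ is convex. The cancellation displayed in the statement shows $H(x)=\sum_{i=1}^{n-1}\bra{1-\alpha_i/\alpha_n}g_i\bra{a_i^Tx}+(r-q/\alpha_n)$, and each coefficient $1-\alpha_i/\alpha_n$ is nonnegative because $\alpha_i\leq\alpha_n$; hence $H$ is a nonnegative combination of the convex $g_i$ plus a constant, and $\set{x\,:\,F(x)\leq 0,\,H(x)\leq 0}$ is convex. The left-to-right containment then follows because the generating set lies inside this convex set: if $F(x)\leq 0$ and $G(x)\leq 0$, then since $1/\alpha_n>0$ we get $H(x)=F(x)+(1/\alpha_n)G(x)\leq 0$, and taking convex hulls gives $\conv\bra{\set{x\,:\,F\leq 0,\,G\leq 0}}\subseteq\set{x\,:\,F\leq 0,\,H\leq 0}$.

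The heart of the argument is the right-to-left containment, via the friends construction. I would take $\bar{x}$ with $F(\bar{x})\leq 0$ and $H(\bar{x})\leq 0$; if $G(\bar{x})\leq 0$ there is nothing to prove, so assume $G(\bar{x})>0$. Consider the line $x(s)=\bar{x}+s a_n$. Orthogonality $a_i\perp a_n$ for $i\neq n$ forces $a_i^Tx(s)=a_i^T\bar{x}$, so $H(x(s))=H(\bar{x})$ is constant in $s$; only the $a_n$-coordinate moves, and along it the hypothesized limit condition yields $G(x(s))\to-\infty$ as $|s|\to\infty$ (the fixed shift $a_n^T\bar{x}$ inside $g_n$ does not affect this limit). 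Since the restriction $s\mapsto G(x(s))$ is concave and continuous, equals a positive value at $s=0$, and tends to $-\infty$ on both sides, the intermediate value theorem produces $s_2<0<s_1$ with $G(x(s_i))=0$. At these two \emph{friends} we have $F(x(s_i))=H(x(s_i))-(1/\alpha_n)G(x(s_i))=H(\bar{x})\leq 0$, so $x(s_1),x(s_2)\in\set{x\,:\,F\leq 0,\,G\leq 0}$, and $\bar{x}=x(0)$ is their convex combination because $s_2<0<s_1$. Hence $\bar{x}\in\conv\bra{\set{x\,:\,F\leq 0,\,G\leq 0}}$, completing the equality.

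The main obstacle is guaranteeing the existence of \emph{both} friends in the right-to-left step: everything hinges on the limit condition forcing the concave restriction $s\mapsto G(x(s))$ to cross zero on each side of $s=0$, together with the bookkeeping that $H$ is invariant along $a_n$ while $F$ absorbs exactly the drop incurred when $G$ vanishes. The convexity of $H$ (equivalently, the sign condition $\alpha_i\leq\alpha_n$) and the strict positivity $1/\alpha_n>0$ are precisely what make the two containments line up; once these are in place the proof is a direct analog of Proposition~\ref{generalintprop}.
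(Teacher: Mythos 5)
Your proof is correct and follows exactly the route the paper intends: the paper gives no separate argument for this proposition, stating only that its proof ``is a direct analog to that of Proposition~\ref{generalintprop}'', and your argument is precisely that analog --- convexity of $H$ from $\alpha_i\leq\alpha_n$ for the left-to-right containment, and friends obtained by moving along $a_n$ (along which $H$ is invariant) until the concave restriction of $G$ hits zero on each side, using the limit condition. The only cosmetic imprecision is the offhand claim $\Int(S)=\set{x:G(x)>0}$, which can fail for degenerate concave $G$, but the paper treats the corresponding first equality as ``direct'' as well and your subsequent chain of containments establishes it regardless.
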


The special structure in both of these propositions is extremely simple, but thanks to the symmetry of the quadratic constraints, they can be used to get formulas for several quadratic intersection cuts. 

\subsection{Intersection cuts for quadratic sets}

\begin{corollary}\label{coro} Let $A\in \Real^{n\times n}$ be an invertible matrix, $D\in \Real^{n\times n}$, $c,d\in \Real^n$, $q \in \Real$, $\gamma \in\Real_+$,  

\[Q := \set{\bra{x,t} \in \Realnone : \|A\bra{x-c}\|_2^2 \leq t},\]
and
	\[F:=\set{\bra{x,t}\in \Realnone\,:\,\gamma t+q\leq -\norm{D\bra{x-d}}^2}.\]
	Then 

\begin{equation}
	\mybar{Q}=\set{\bra{x,t}\in \Realnone\,:\,\begin{aligned}\norm{A\bra{x-c}}^2&\leq t\\x^TNx+a^Tx+f&\leq (\alpha_n+\gamma)t\end{aligned}},
\end{equation}
for
\[N=A^TRA, \]
\[a=-2A^Te-2A^TRAc, \]
\[f=c^TA^TRAc+2\bra{A^Te}^Tc-w-q,\]
\[ R = \sum_{i=1}^{n-1} \bra{\alpha_n-\alpha_i}v_i v_i^T,\]
\[ e = \sum_{i=1}^n\alpha_iv_i^T A (c-d) v_i,\]
\[ w = \sum_{i=1}^n\alpha_i\bra{v_i^T A(c-d)}^2,\]
where $\bra{v_i}_{i=1}^n\subseteq\Real^n$ and $\bra{\alpha_i}_{i=1}^n\subseteq \Real$ correspond to an eigenvalue decomposition of $A^{-T}D^TDA^{-1}$ so that 
\[A^{-T}D^TDA^{-1}=\sum_{i=1}^n\alpha_i v_i v_i^T,\]
$\norms{v_i} =1$ for all $i \in [n]$, $v_i^T v_j = 0$ for all $i \neq j$, and $\alpha_n\geq \alpha_i$ for all $i \in [n]$.
\end{corollary}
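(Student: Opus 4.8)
The plan is to recognize the corollary as a direct application of Proposition~\ref{generalintprop}, after a simultaneous diagonalization of the two quadratic forms $B^TB$ and $A^TA$. The obstacle to applying the proposition directly in the original coordinates is its orthogonality requirement $a_i \perp a_j$: one cannot write both $F(x)=\norm{B(x-c)}^2$ and the forbidden function separably over a single \emph{orthogonal} family $\set{a_i}$ in $x$-space, because the two quadratic forms need not commute. This is exactly what the eigendecomposition $B^{-T}A^TAB^{-1}=\sum_{i=1}^n \alpha_i v_i v_i^T$ in the statement resolves.

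First I would pass to the coordinates $s = V^T B(x-c)$, where $V=[v_1,\dots,v_n]$ is orthogonal. Since $(x,t)\mapsto (V^TB(x-c),t)$ is an affine bijection of $\Real^n\times\Real$ and both $\conv$ and $\Int$ commute with such maps (the intersection-cut analogue of Lemma~\ref{basictransformlemma}), it suffices to compute the convex hull in $s$-coordinates and map back. Orthogonality of $V$ gives $F=\norm{s}^2=\sum_i s_i^2$, and using $V^TB^{-T}A^TAB^{-1}V=\mathrm{diag}(\alpha_i)$ together with $x-d=B^{-1}Vs+(c-d)$ one gets $\norm{A(x-d)}^2=\sum_i\alpha_i s_i^2 + 2\sum_i \alpha_i \bra{v_i^TB(c-d)} s_i + w$, where $w=\sum_i\alpha_i\bra{v_i^TB(c-d)}^2$ is precisely the scalar in the statement. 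Thus in $s$-coordinates we are in the setting of Proposition~\ref{generalintprop} with $g_i(\cdot)=(\cdot)^2$, $a_i=e_i$ (so $a_i\perp a_j$ holds automatically), $m=0$, $r=0$, the given $\gamma$, and the constant $w+q$ in the role of the proposition's $q$.

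Second, I would check the hypotheses. Since $B^{-T}A^TAB^{-1}=(AB^{-1})^T(AB^{-1})$ is positive semidefinite, all $\alpha_i\ge 0$, and assuming $A\neq 0$ the largest eigenvalue satisfies $\alpha_n>0$; with $\gamma\ge 0$ this gives $1+\gamma/\alpha_n>0$. The limit condition~\eqref{intersectlimcondition} holds because its leading term is $-\alpha_n s^2$, a downward parabola. Proposition~\ref{generalintprop} then yields $\conv\bra{C\setminus\Int\bra{S}}=\epi(F)\cap\epi(H)$ with $H(s)=\frac{F(s)+(1/\alpha_n)G(s)}{1+\gamma/\alpha_n}$. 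Clearing the denominators $(1+\gamma/\alpha_n)$ and $\alpha_n$ turns the cut $H(s)\le t$ into $\sum_i(\alpha_n-\alpha_i)s_i^2 - l^Ts - (w+q)\le(\alpha_n+\gamma)t$, in which the $i=n$ term drops out, leaving $\sum_{i=1}^{n-1}$.

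Finally I would substitute $s_i=v_i^TB(x-c)$ back. The quadratic part becomes $\bra{B(x-c)}^T\bra{\sum_{i=1}^{n-1}(\alpha_n-\alpha_i)v_iv_i^T}\bra{B(x-c)}=\bra{B(x-c)}^THB(x-c)$, and the linear part $-l^Ts$ becomes $-2e^TB(x-c)$ with $e=\sum_i\alpha_i\bra{v_i^TB(c-d)}v_i$. Expanding $\bra{B(x-c)}^THB(x-c)-2e^TB(x-c)-(w+q)\le(\alpha_n+\gamma)t$ in $x$ collects into $x^TEx+a^Tx+f\le(\alpha_n+\gamma)t$ with $E=B^THB$, $a=-2B^Te-2B^THBc$, and $f=c^TB^THBc+2(B^Te)^Tc-w-q$, matching the statement; together with the base constraint $\norm{B(x-c)}^2\le t$ this is the claimed description. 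I expect the only genuine work to be this routine (if lengthy) back-substitution, and the verification that the eigendecomposition produces exactly the orthogonality that Proposition~\ref{generalintprop} demands; everything else is bookkeeping.
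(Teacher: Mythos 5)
Your proposal is correct and matches the paper's own proof in substance: the paper likewise sets $y=B(x-c)$, rewrites both constraints over the orthonormal eigenvectors $v_i$ (your extra rotation $s=V^Ty$ is cosmetic, since taking $a_i=v_i$ in Proposition~\ref{generalintprop} already satisfies the orthogonality requirement), and then invokes Proposition~\ref{generalintprop}; your back-substitution reproducing $E$, $a$, $f$ is the same bookkeeping the paper leaves implicit. Your aside that $\alpha_n>0$ requires $A\neq 0$ is a legitimate edge case that the paper also glosses over, but it does not affect the main argument.
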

\begin{proof}
Let $y=A(x-c)$ and $T \eqq Q\setminus\Int\bra{F}$. Using orthonormality of the vectors $v_i$, $T$ can be written on the $y$ variables as
\[T=\set{\bra{y,t}\in \Realnone\,:\,\begin{aligned}
	\sum_{i=1}^n \bra{v_i^Ty}^2&\leq t\\
	-\sum_{i=1}^n \alpha_i \bra{v_i^Ty}^2-2e^T y-w-q &\leq \gamma t\end{aligned}}.\]
	The result then follows by using Proposition~\ref{generalintprop}. 
\qed\end{proof}


An interesting case of Corollary~\ref{coro} arises when $\gamma=0$. In this case, the base  set $B$ corresponds to a paraboloid and the forbidden set $F$ corresponds to an ellipsoidal cylinder. In such case,  the minimization of $t$ over $(x,t)\in B\setminus\Int\bra{F}$ is equivalent to the minimization of a convex quadratic function outside an ellipsoid, which corresponds to the simplest indefinite version of the well known trust region problem. While this is a non-convex optimization problem, it  can be solved in polynomial time through Lagrangian/SDP approaches \cite{polik2007survey}. It is known that  optimal dual multipliers of an SDP relaxation of a non-convex quadratic programming problem such as the trust region problem can be used to construct a finite convex quadratic optimization problem with the same optimal value as the original non-convex  problem (e.g. \cite{DBLP:conf/ipco/GiandomenicoLRS11}). Furthermore, the complete feasible region induced by an SDP relaxation on the original space (in this case $(x,t)$) can be characterized by  an infinite number of   convex quadratic constraints \cite{kojima2000cones}. This characterization has recently been simplified for the feasible region of the trust region problem in   \cite{dan}. This work gives a semi-infinite characterization of   $T$ for $\gamma=0$ composed by the convex quadratic constraint $\normss{A\bra{x-c}}\leq t$ plus an infinite number of linear inequalities that can be separated in polynomial time. Corollary~\ref{coro} shows that these linear inequalities can be subsumed by a single convex quadratic constraint, which gives another explanation for their polynomial time separability\footnote{After our original submission, it was brought to our attention that reduction of the infinite number of inequalities to a single quadratic inequality can also be directly deduced from the formulas for such linear inequalities given in \cite{dan}.}. We note that the techniques in  \cite{dan} are also adapted to other non-convex optimization problems (both quadratic and non-quadratic). Hence, combining Corollary~\ref{coro} with these techniques could yield valid convex quadratic inequalities for more general non-convex problems.

Another interesting application of Corollary~\ref{coro} for the case $\gamma=0$ is the Shortest Vector Problem (SVP) \cite{MGbook} of the form $\min\set{\normss{Ax}\,:\, x\in \mathbb{Z}^n\setminus\set{0_n}}$. Similar to the Closest Vector Problems (CVP) studied in Section~\ref{simplequasplitcuts}, we can transform this problem to $\min_{\bra{x,t}\in Y\cap\bra{\mathbb{Z}^n\times \Real}} t$ for \[Y=\set{(x,t)\in \Realnone \,:\,\normss{Ax}\leq t,\, x\neq {0_n}},\]
so that we can strengthen the problem by generating valid inequalities for $Y$. Unfortunately, as the following simple lemma shows, traditional split cuts will not add any strength. 

\begin{lemma}
Let $Y_0 = Y \cup \set{{({0_n}, 0)}}$ and $F$ be a split. For any $A \in \Real^{n \times n}$,
\beqn
t^{*} = \min \set{t \,:\, (x, t) \in \cap_{(\pi, \pi_0) \in \Z^n \times \Z} \,\mybar{Y_0}} = 0.
\eeqn
\end{lemma}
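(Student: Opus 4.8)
The plan is to establish the two inequalities $t^{*}\ge 0$ and $t^{*}\le 0$ separately. The first is immediate from containment and convexity, while the second reduces to the observation that the trivial minimizer $({\bf 0},0)$ is never separated by any of the admissible split cuts.

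First I would argue $t^{*}\ge 0$. Since $Q_0 = Q\cup\set{({\bf 0},0)}$ is contained in the halfspace $\set{(x,t):t\ge 0}$, which is convex, every split-cut closure $Q_0^{\pi,\pi_0,\pi_0+1}=\conv\bra{Q_0\setminus\Int(S)}$ with $S=\set{(x,t):\pi^Tx\in[\pi_0,\pi_0+1]}$ is also contained in $\set{t\ge 0}$. Hence the intersection over all $(\pi,\pi_0)\in\Z^n\times\Z$ lies in $\set{t\ge 0}$, giving $t^{*}\ge 0$.

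The key step is then to show $t^{*}\le 0$ by exhibiting a single feasible point with $t=0$ in the intersection; the natural candidate is $({\bf 0},0)\in Q_0$. For any integer split data $(\pi,\pi_0)\in\Z^n\times\Z$, the origin satisfies $\pi^T{\bf 0}=0$, and since $\pi_0\in\Z$ the open interval $(\pi_0,\pi_0+1)$ contains no integer; in particular $0\notin(\pi_0,\pi_0+1)$, so $({\bf 0},0)\notin\Int(S)$. Therefore $({\bf 0},0)\in Q_0\setminus\Int(S)\subseteq\conv\bra{Q_0\setminus\Int(S)}=Q_0^{\pi,\pi_0,\pi_0+1}$ for every such split. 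Consequently $({\bf 0},0)$ belongs to $\bigcap_{(\pi,\pi_0)\in\Z^n\times\Z}Q_0^{\pi,\pi_0,\pi_0+1}$, which forces $t^{*}\le 0$. Combining with the lower bound yields $t^{*}=0$.

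The only real content, and hence the main ``obstacle,'' is recognizing that the width-one integer splits $\pi^Tx\in[\pi_0,\pi_0+1]$ can never place the origin strictly in their interior, so they cannot shave the minimizing point $({\bf 0},0)$ off the relaxation; everything else is a one-line convexity and containment argument. I expect no technical difficulty beyond being careful that the splits constrain only $x$ and leave $t$ free, which is exactly what guarantees that the $t$-coordinate of the origin is irrelevant to whether it survives each cut.
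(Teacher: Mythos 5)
Your proof is correct and follows essentially the same route as the paper: the lower bound $t^*\ge 0$ comes from non-negativity of the norm (so $Q_0\subseteq\set{t\ge 0}$, which is preserved under convex hulls and intersections), and the upper bound comes from observing that $({\bf 0},0)$ always lies on one side of any integer split disjunction and hence survives every cut. You spell out both directions slightly more explicitly than the paper's two-line argument, but the content is identical.
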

\begin{proof}
Note that for all integer splits $(\pi, \pi_0) \in \Z^n \times \Z$, $(\xbar, \tbar) = ({0_n}, 0)$ belongs to one side of the disjunction. Thus, we have $t^{*} \le 0$ and the result follows from non-negativity of the norm.
\qed\end{proof}

However, we can easily construct \emph{near} lattice free ellipsoids centered at $0_n$ that do not contain any point from $\mathbb{Z}^n\setminus\{0_n\}$ in their interior, and use them to get some bound improvement. For instance, in the trivial case of $A = I$, Corollary~\ref{coro} applied to the single \emph{near} lattice free ellipsoid given by the unit ball $\set{x \in \Real^n \,:\, \norms{x} \le 1}$ yields a cut that provides the optimal value $t^{*}= 1$. Similar ellipsoids could be used to  generate strong convex quadratic valid inequalities for non-trivial cases to significantly speed up the solution of SVP problems. Studying the effectiveness of these cuts is left for future research.

We end this section with a brief discussion about the strength and possible extensions of the aggregation technique. For this, we begin by presenting the following corollary of Proposition~\ref{boundinterprop} whose proof is analogous to that of Corollary~\ref{coro}.
\begin{corollary}\label{coro2} Let $A\in \Real^{n\times n}$ be an invertible matrix, $D\in \Real^{n\times n}$, $c\in \Real^n$, $r_1,r_2\in\Real_+$,
	\[E^2 := \set{x\in \mathbb{R}^n : \|A\bra{x-c}\|_2^2 \leq r_1},\]
	and
	\[F:=\set{x\in \Real^n\,:\,\normss{D\bra{x-c}} \le  r_2}.\]
Then
\begin{equation}
	\mybar{E^2}=\set{x \in \Real^n:\,\begin{aligned}\norm{A\bra{x-c}}^2_2&\leq r_1\\x^TNx+a^Tx+f&\leq 0\end{aligned}},
\end{equation}

\[N=A^TRA, \]
\[a=-2A^TRAc, \]
\[f=c^TA^TRAc+r_2/\alpha_n - r_1,\]
\[ R = \sum_{i=1}^{n-1} \bra{1 - \alpha_i/\alpha_n}v_i v_i^T,\]
where $\bra{v_i}_{i=1}^n\subseteq\Real^n$ and $\bra{\alpha_i}_{i=1}^n\subseteq \Real$ correspond to an eigenvalue decomposition of $A^{-T}D^TDA^{-1}$ so that 
\[A^{-T}D^TDA^{-1}=\sum_{i=1}^n\alpha_i v_i v_i^T,\]
$\norms{v_i} =1$ for all $i \in [n]$, $v_i^T v_j = 0$ for all $i \neq j$, and $\alpha_n\geq \alpha_i$ for all $i \in [n]$.
\end{corollary}

Corollary \ref{coro2} shows how to  construct the convex hull of the set obtained by removing an ellipsoid or an ellipsoidal cylinder from an ellipsoid. However, this construction only works if the ellipsoids have a common center $c$. The following example shows how the construction can fail for non-common centers. In addition, the example shows that the aggregation technique does not subsume the interpolation technique and sheds some light into the relationship between Corollaries~\ref{coro} and \ref{coro2} and SDP relaxations for quadratic programming. 
\begin{example} Let $B=\set{\bra{z,y} \in \Real^2\,:\, z^2+y^2\leq 4}$ and $F$ be a split associated with the split disjunction $z\leq 0 \vee z\geq 1$. From Corollary~\ref{ellip coro}, we have that
	\bearn
		\mybar{B}&:=&\conv\bra{\set{\bra{z,y}\in B\,:\, z\leq 0}\cup \set{\bra{z,y}\in B\,:\, z\geq 1}}\\
		&=&\set{\bra{z,y} \in B \,:\, |y|\leq \bra{\sqrt{3}-2}z+2}.
	\eearn
Now let $G(z,y)= z^2+y^2-4$ and $J(z,y)=-(z-1/2)^2+1/4$. Since split disjunction $z\leq 0 \vee z\geq 1$ is equivalent to $J(z,y)\leq 0$, we have $\mybar{B}=\conv\bra{S}$, where
\begin{equation}
	S = \bra{\set{\bra{z,y}\in \Real^2\,:\, G(z,y)\leq 0,\quad J(z,y)\leq 0}}.
\end{equation}
Now consider $W_\lambda=(1-\lambda) G+\lambda J$. One can check that the split cut $|y|\leq \bra{\sqrt{3}-2}z+2$ obtained through Corollary~\ref{ellip coro}, can be equivalently written as 
\begin{subequations}
\begin{align}
	y^2-\bra{\bra{\sqrt{3}-2}z+2}^2&\leq 0 \label{splitcut} \\
	\label{boundconst}\bra{\sqrt{3}-2}z+2&\geq 0.
\end{align}
\end{subequations}
In turn, \eqref{splitcut} is equivalent to $W_{\lambda^*}\leq 0$ for $\lambda^*=\frac{4}{33}\bra{6-\sqrt{3}}$ because $W_{\lambda^*} / \bra{\frac{1}{33} \bra{9+4\sqrt{3}}} = y^2-\bra{\bra{\sqrt{3}-2}z+2}^2$. By noting that \eqref{boundconst} holds for $B$, we conclude that 
\begin{equation}
	\mybar{B} =\set{\bra{z,y}\in B\,:\, W_{\lambda^*}(z,y)\leq 0}.
\end{equation}
Unfortunately, $W_{\lambda^*}$ is not a convex function, so it does not fit in the aggregation framework described in this section.  In particular, $W_{\lambda^*}$ is an indefinite quadratic function so it cannot be obtained from an SDP relaxation of $S$. 
Indeed, we can show that the SDP relaxation of $S$ strictly contains $\mybar{B}$. Finally, while we can obtain $W_{\lambda^*}$ through a procedure described in \cite{yildiran2009convex}, this procedure requires the execution of a numerical algorithm and does not give closed form expressions such as those provided by Corollary~\ref{ellip coro}.
\end{example}

\section{Final remarks and future work}

We introduced two techniques that can be used to construct formulas for split, k-branch split, and general intersection cuts for several classes of convex sets. While obtaining closed form expressions of these formulas requires sets with specific structures, the techniques can yield general intersection cuts for a wide range of non-polyhedral sets including quadratic sets.  Furthermore, the  independence of the approaches on the specific class of the considered convex set (e.g. quadratic, semi-algebraic, etc.) suggests a high potential for extensibility to other settings by perhaps sacrificing  closed form expressions in favor of numerical methods. For instance, consider the approach described in Section~\ref{tincsec}. While this approach was used  in Sections~\ref{simplequasplitcuts} and \ref{complicated}  to obtain closed form expressions of split cuts for quadratic sets, it may not be successful when applied to sets that are not semi-algebraic or quadratic. However, the approach may be successful in numerically constructing split cuts for a given disjunction (i.e., when $\pi, \phat, \pi_0$, and $\pi_1$ are fixed to certain numerical values).

With regards to the potential effectiveness of the developed cuts in the context of solution methods for MINLP, we note that adding such nonlinear cuts to the continuous relaxation of a MINLP could significantly increase its solution time. Hence there will likely be a strong trade-off between the strength provided by such cuts and their computational cost. It is then unclear if such nonlinear cuts can provide a significant computational advantage over linearization approaches such as those in \cite{DBLP:conf/ipco/Bonami11,mustafa}, which do not require explicit cut formulas. However, even in such cases, the developed nonlinear cuts can provide valuable information about the performance of the linearization approaches. For instance, the linearization approaches can sometimes require a large number of iterations to yield a bound improvement similar to that obtained with the associated nonlinear cut. Adding the nonlinear cut provides a simple way to evaluate if the lack of bound improvement is due to lack of strength of the cut  or lack of convergence of the linearization approach. 
Similarly, the availability of explicit formulas of split cuts for quadratic sets proven extremely useful to evaluate the strength of a cutting plane approach based on extended formulations  in \cite{orlpaper}. We are further exploring the computational effectiveness of the interpolation and aggregation techniques and the techniques in \cite{orlpaper}.

\section{Appendix}\label{appendix}
Here we provide the omitted proofs and auxiliary lemmas.


\ghyperlemma*
\begin{proof}
We show the equivalent version of the lemma given by
\begin{enumerate}[(i)]
  \item If $s \in \set{\pi_0, \pi_1}$, then $\abs{as +b} =(\abs{s}^p + \abs{l}^p)^{1/p}$ and
  \item if $s \notin \bra{\pi_0, \pi_1}$, then $\abs{as +b} \le (\abs{s}^p + \abs{l}^p)^{1/p}$.

\end{enumerate}

Let $f(s):=a s +b$ and $g(s):=(\abs{s}^p + \abs{l}^p)^{1/p}$. By definition of $a$ and $b$ we have that $f( \pi_i)=g(\pi_i)$ for $i\in\set{0,1}$. Indeed, $f(s)$ is the (affine) linear interpolation of $g(s)$ through $z=\pi_0$ and $z = \pi_1$. Convexity of $g(s)$ then implies $f(s)\le g(s)$ for all $s\notin \bra{\pi_0, \pi_1}$. If $\abs{\pi_0}=\abs{\pi_1}$, then $\abs{as +b} =f(s)$ and the result follows directly. If $\abs{\pi_0}\neq\abs{\pi_1}$, one can check that $\abs{as +b} =f(s)$ for $s\in \left[\pi_0, \pi_1\right]$ and hence (i) holds. For (ii) it suffices to show that $-as-b\leq g(s)$ for all $s\in \Real$. To show this we first assume $a>0$ and hence $\pi_1>0$ (case $a<0$ is analogous). Because $f(s)$ is affine and $f( \pi_i)=g(\pi_i)$ for $i\in\set{0,1}$, by a sub-differential version of the mean value theorem we have that there exists $\bar{s}\in (\pi_0,\pi_1)$ such that $a \in \partial g(\bar{s})$. Then, by symmetry of $g(s)$ and its convexity, we have that $g(s)\geq g(-\bar{s})-a(s+\bar{s}) = - as + g(-\bar{s})-a\bar{s}$ for $s \in \Real$. The result then follows by noting that $ g(-\bar{s})-a\bar{s}\ge-b$ for all $\bar{s}\in (\pi_0,\pi_1)$ because $g(s)-as \ge 0$ for all $s \in \Real$ and $-b\le0$.
\qed
\end{proof}
%

\cylinderlemma*
 \begin{proof}
We first prove the second case $\pi\notin L^\perp$. The left to right containment follows from $B \setminus \Int\bra{F} \subseteq B$ and convexity of $B$. To show the right to left containment, let $\xbar \in B$ such that $\pi^T \xbar \in \bra{\pi_0,\pi_1}$ and $u \in L$. Note that $\pi\notin L^\perp$ implies $\pi^T u \neq 0$. Let   $x^i \eqq \xbar + \lambda_i u$ for $i \in \set{0,1}$, where 
\beqn
\lambda_i = \frac{\pi_i - \pi^T \xbar}{\pi^T u},
\eeqn
and let  $\beta \in (0,1)$ be such that $\pi^T \xbar = \beta \pi_0 + \bra{1-\beta} \pi_1$. Because $u \in L$ and since $\pi^T x^i = \pi_i$,  we have $x^i \in B \setminus \Int\bra{F}$ for $i \in \set{0,1}$. The results then follows by noting that $\xbar = \beta x^0 + \bra{1-\beta} x^1$.

We prove the first case by showing that
\bear
\conv\bra{B \setminus \Int\bra{F}} &=& \conv\bra{\bra{B_0 + L} \setminus \Int\bra{F}} \label{tp1} \\
&=& \conv\bra{B_0 \setminus \Int\bra{F}} + L \label{tp3} \\
&=& \bra{B_0 \cap C} + L \label{tp4} 
\eear
Note that \eqref{tp1} and \eqref{tp4} follow from the assumptions.
To show the left to right containment in \eqref{tp3}, let $\xbar \in \conv\bra{\bra{B_0 + L} \setminus \Int\bra{F}}$. There exist $y^i \in B_0$, $u^i \in L$ for $i \in \set{0,1}$, and $\beta \in [0,1]$ such that for $x^i \eqq y^i + u^i$, we have $x^i \notin \Int\bra{F}$ and $\xbar = \beta x^0 + \bra{1-\beta} x^1$. Note that $\pi\in L^\perp$ and $x^i \notin \Int\bra{F}$ imply $y^i \notin \Int\bra{F}$ for $i \in \set{0,1}$. The result then follows from noting that $\beta y^0 + \bra{1-\beta} y^1 \in \conv\bra{B_0 \setminus \Int\bra{F}}$ and $\beta u^0 + \bra{1-\beta} u^1 \in L$.

To show the right to left containment in \eqref{tp3}, let $\xbar \in \conv\bra{B_0 \setminus \Int\bra{F}} + L$. There exist $u \in L$, $y^i \in B_0 \setminus \Int\bra{F}$ for $i \in \set{0,1}$, and $\beta \in [0,1]$ such that $\xbar = \beta y^0 + \bra{1-\beta} y^1 + u$. If $\beta \in \set{0,1}$, the result follows by noting that $\pi\in L^\perp$ and $y^0,y^1 \notin \Int\bra{F}$ imply $\xbar \notin \Int\bra{F}$. Assume $\beta \in \bra{0,1}$ and let $x^0 \eqq y^0 + \frac{u}{2\beta}$ and $x^1 \eqq y^1 + \frac{u}{2\bra{1-\beta}}$. The result then follows by noting that $x^i \in B_0+L \setminus \Int\bra{F}$ for $i \in \set{0,1}$ and $\xbar = \beta x^0 + \bra{1-\beta} x^1$.
\qed
\end{proof}

\tparaboloidprop*
\begin{proof}

We first prove the last case where $\phat > 0$ and $\frac{-\normss{\pi}}{4\phat} \le \pi_0 < \pi_1$, or $\phat < 0$ and $\pi_0 < \pi_1 \le \frac{-\normss{\pi}}{4\phat}$ using Proposition~\ref{method1Cross}. Using Lemma \ref{norm decomposition} we have
\beq
C  = \set{ (x,t) \in \Realnone \,:\, \normss{P_\pi^\perp x} \le \bra{c \pi^T x + d t + e}^2 - \frac{\bra{\pi^T x + b}^2}{\normss{\pi}}, \quad c \pi^T x + d t + e \ge 0} \label{qineq}.
\eeq
Now consider the following two cases.

\noindent\textbf{Case 1.} Assume that $\norms{\pi} \ne 0$. To prove the right to left containment in \eqref{generalintcondbCross}, let $(\xbar, \tbar) \in C \cap \bd\bra{S}$. We need to show that
\beq
\bra{c \pi^T \xbar + d\tbar + e}^2 - \frac{\bra{\pi^T \xbar + b}^2}{\normss{\pi}} = \tbar - \frac{\bra{\pi^T \xbar}^2}{\normss{\pi}}. \label{15a_para}
\eeq
Replacing $\tbar$ with $\bra{\pi_i - \pi^T \xbar}/\phat$ for $i \in \set{0,1}$, one can check that \eqref{15a_para} follows from the definition of $b, c, d,$ and e.
To prove the left to right containment in \eqref{generalintcondbCross}, let $(\xbar, \tbar) \in Q_0 \cap \bd\bra{S}$. We only need to show that $c \pi^T \xbar + d\tbar + e \ge 0$. Since $d = c \phat$, we need to show that $c \bra{\pi^T \xbar + \phat \tbar} \ge -e$, which after a few simplifications, can be written as
\beq
\phat \bra{\pi^T \xbar + \phat \tbar} \ge - \bra{\normss{\pi} + \sqrt{\normss{\pi}+ 4 \pi_0 \phat} \sqrt{\normss{\pi}+ 4 \pi_1 \phat}}/{4}. \label{nonneg}
\eeq
\eqref{nonneg} follows from noting that $\min \set{\phat \bra{\pi^Tx + \phat t} \,:\, (x, t) \in Q_0} = -\frac{\normss{\pi}}{4}$.

To show \eqref{generalintcondcCross}, let $\bra{\xbar,\tbar} \in Q_0 \setminus \Int\bra{S}$. Proving $c \pi^T \xbar + d\tbar + e \ge 0$ is similar as in case 1. We only need to show that $(\xbar, \tbar)$ satisfies the quadratic inequality in \eqref{qineq}, which we prove by showing that
\beq
\bra{\bra{c \pi^T \xbar + d\tbar + e}^2 - \frac{\bra{\pi^T \xbar + b}^2}{\normss{\pi}}} - \bra{\tbar - \frac{\bra{\pi^T \xbar}^2}{\normss{\pi}}} \ge 0. \label{rhs}
\eeq
One can check that proving \eqref{rhs} is equivalent to showing that
\beqn
\frac{f^2 \bra{\pi^T \xbar + \phat \tbar - \pi_0} \bra{\pi^T \xbar + \phat \tbar - \pi_1}}{2 \bra{\pi_1 - \pi_0}^2 \phat^2} \ge 0,
\eeqn
which follows from $\pi^T \xbar + \phat \tbar \notin (\pi_0, \pi_1)$. Note that $C$ is a conic set with apex $\bra{x^*,t^*} = \bra{\frac{-b}{\normss{\pi}} \pi,\frac{bc-e}{d}}$. Furthermore, 
\beqn
\bra{\pi,\phat}^T \bra{x^*,t^*} = -e/c = \frac{-\normss{\pi}}{4\phat} - \frac{\sqrt{\normss{\pi} + 4 \pi_0 \hat{\pi}} \sqrt{\normss{\pi} + 4 \pi_1 \hat{\pi}}}{4\phat}.
\eeqn
Hence, if $\phat < 0$, then $\bra{\pi,\phat}^T \bra{x^*,t^*} \ge \frac{-\normss{\pi}}{4\phat} \ge \pi_1$ and if $\phat > 0$, then $\bra{\pi,\phat}^T \bra{x^*,t^*} \le \frac{-\normss{\pi}}{4\phat} \le \pi_0$. Friends condition \eqref{friendcond} then follows from Proposition~\ref{freefriendsconeprop}.

\noindent\textbf{Case 2.} If  $\norms{\pi} = 0$, $C$ is simplified to 
\beq
C = \set{ (x,t) \in \Realnone \,:\, \normss{x} \le \bra{dt+e}^2, \quad dt+e \ge 0}.
\eeq

Interpolation condition \eqref{generalintcondbCross} follows from noting that $\bra{d \tbar + b}^2 = \tbar$. Non-negativity of $d, e$, and $t$ also imply $d \tbar + e \ge 0$. Proving  \eqref{generalintcondcCross} is equivalent to showing that 
\beqn
\frac{f^2 \bra{\phat \tbar - \pi_0} \bra{\phat \tbar - \pi_1}}{2 \bra{\pi_1 - \pi_0}^2 \phat^2} \ge 0,
\eeqn
which follows from $\phat \tbar \notin (\pi_0, \pi_1)$. Note that $C$ is a conic set with apex $\bra{x^*,t^*} = \bra{0,\frac{-e}{d}}$. Furthermore, 
\beqn
\bra{\pi,\phat}^T \bra{x^*,t^*} = -e/c.
\eeqn
As shown in Case~1, we have $\bra{\pi,\phat}^T \bra{x^*,t^*} \notin \bra{\pi_0,\pi_1}$. Friends condition \eqref{friendcond} then follows from Proposition~\ref{freefriendsconeprop}.

To prove the other cases, let $S_0 \eqq \set{(x,t) \in Q_0\,:\, \pi^Tx+\phat t \le \pi_0}$ and $S_1 \eqq \{(x,t) \in Q_0\,:\, \pi^Tx+\phat t \ge \pi_1\}$. Consider the first case where $\phat > 0$ and $\pi_1 \le \frac{-\normss{\pi}}{4\phat}$. We prove the result by showing that $S_0 = \emptyset$ and $S_1 = Q_0$. If $\norm{\pi}_2 = 0$, the result follows from non-negativity of $t$. Now assume that $\norm{\pi}_2 \neq 0$. Note that if $S_0 \neq \emptyset$, one can find $(\xbar, \tbar) \in S_0$ such that $\bra{\pi^T \xbar}^2/\norm{\pi}_2^2 \le \bra{\pi_0 - \pi^T \xbar}/\hat{\pi}$. Therefore, we prove $S_0 = \emptyset$ by showing that $\bra{\pi^T x}^2/\norm{\pi}_2^2 >  \bra{\pi_0 - \pi^T x}/\hat{\pi}$. This follows from noting that for $y \in \Real$, the quadratic equation $\frac{y^2}{\norm{\pi}_2^2} =  \frac{\pi_0 - y}{\hat{\pi}}$ does not have any solution.
To prove $S_1 = Q_0$, we show that $\pi^T x + \phat t \ge \pi_1$ is a valid inequality for $Q_0$. This comes from the fact that the quadratic equation $\frac{y^2}{\norm{\pi}_2^2} =  \frac{\pi_1 - y}{\hat{\pi}}$ has at most a single solution and as a result, we have $\bra{\pi_1 - \pi^T x}/\hat{\pi} \le \bra{\pi^T x}^2/\norm{\pi}_2^2 \le t$. The proof for the case $\phat < 0$ and $\frac{-\normss{\pi}}{4\phat} \le \pi_0$ is analogous and follows by noting that $S_0 = Q_0$ and $S_1 = \emptyset$.

Finally, the second case $\phat > 0$ and $\pi_0 < \frac{-\normss{\pi}}{4\phat} < \pi_1$. We prove the result by showing that $S_0 = \emptyset, S_1 \subsetneq Q_0$, and $S_1 \neq \emptyset$. Proving $S_0 = \emptyset$ is analogous to the previous case. We have $S_1 \subsetneq Q_0$ since $\bra{\bar{x}, \tbar} = \bra{\frac{- \pi}{2 \phat}, \frac{\normss{\pi}}{4 \phat^2}} \in Q_0$, but $\bra{\bar{x}, \tbar} \notin S_1$. To prove $S_1 \neq \emptyset$, one can check that for any $\bar{x} \in \Real^n$ and $\tbar = \Max \set{\normss{\xbar}, \frac{\pi_1 - \pi^T \bar{x}}{\hat{\pi}}}$, $\bra{\bar{x}, \tbar} \in S_1$. The proof for third case $\phat < 0$ and $\pi_0 < \frac{-\normss{\pi}}{4\phat} < \pi_1$ is analogous and follows by noting that $S_1 = \emptyset, S_0 \subsetneq Q_0$, and $S_0 \neq \emptyset$.
\qed
\end{proof}

\tconeprop*
\begin{proof}
We first prove the last case $0 \in (\pi_0, \pi_1)$ and $\phat \in (-\norm{\pi}, \norm{\pi})$ using Proposition~\ref{method1Cross}. Note that $\phat \neq 0$ and $\phat \in (-\norm{\pi}, \norm{\pi})$ imply $\norms{\pi} \neq0$. Using Lemma \ref{norm decomposition} we have 
\beq
C  = \set{ (x,t) \in \Realnone \,:\, \normss{P_\pi^\perp x} \le \bra{c \pi^T x + d t + e}^2 - \frac{\bra{a\pi^T x + b}^2}{\normss{\pi}}, \quad c \pi^T x + d t + e \ge 0} \label{qineq2}.
\eeq
Note that $d>0$. Similarly to the proof of Proposition~\ref{newpnlscut prop}, one can show that interpolation condition \eqref{generalintcondCross} holds by the definition of $a, b, c, d$, and $e$. If $\abs{\pi_0 } = \abs{\pi_1 }$, then $u = \bra{\pi,\frac{-c \normss{\pi}}{d}} \in \lin\bra{C}$ and friends condition \eqref{friendcond} follows from Proposition~\ref{freefriendslinprop}. If $\abs{\pi_0 } \ne \abs{\pi_1 }$, then $C$ is a conic set with apex $\bra{x^*,t^*} = \bra{\frac{-b}{a \normss{\pi}} \pi,\frac{bc-ae}{ad}}$. Furthermore, 
\beqn
\bra{\pi,\phat}^T \bra{x^*,t^*} = \frac{2 \pi_0\pi_1}{\pi_0 + \pi_1}.
\eeqn
If $\pi_0 + \pi_1 <0$, then one can check that $\frac{2 \pi_0\pi_1}{\pi_0 + \pi_1} \ge \pi_1$, and if $\pi_0 + \pi_1 >0$, then one can check that $\frac{2 \pi_0\pi_1}{\pi_0 + \pi_1} \le \pi_0$. Friends condition \eqref{friendcond} then follows from Proposition~\ref{freefriendsconeprop}.

To prove the first case $0 \notin \bra{\pi_0, \pi_1}$, we only need to show that friends condition \eqref{friendcond} holds. This follows from Proposition~\ref{freefriendsconeprop} by noting that $K_0$ is a conic set whose apex is the origin.

Finally, we prove the second and third cases. Let $S_0 \eqq \set{(x,t) \in K_0\,:\, \pi^Tx+\phat t \le \pi_0}$ and $S_1 \eqq \{(x,t) \in K_0\,:\, \pi^Tx+\phat t \ge \pi_1\}$. Consider the second case $0 \in \bra{\pi_0,\pi_1}$ and $\hat{\pi} \le -\norm{\pi}_2$. We prove the result by showing that $S_1 = \emptyset$, $S_0 \subsetneq K_0$, and $S_0 \neq \emptyset$. If $\norm{\pi}_2 = 0$, the result follows from non-negativity of $t$. Now assume that $\norm{\pi}_2 \neq 0$. Note that if $S_1 \neq \emptyset$, one can find $(\xbar, \tbar) \in S_1$ such that $\bra{\pi^T \xbar}^2/\norm{\pi}_2^2 \le  \bra{\pi_1 - \pi^T \xbar}^2/\hat{\pi}^2$. Therefore, we prove $S_1 = \emptyset$ by showing that $\bra{\pi^T x}^2/\norm{\pi}_2^2 >  \bra{\pi_1 - \pi^T x}^2/\hat{\pi}^2$.
Note that non-negativity of $t$, $\phat < 0$, and $\pi^T x + \hat{\pi} t \ge \pi_1$ imply $\pi^T x \ge \pi_1 > 0$. One can see that $-\pi^T x < \pi_1 - \pi^T x < \pi^T x$, where the first inequality comes from the fact that $\pi_1 > 0$, and the second inequality follows from $\pi_1 \le \pi^T x$ and $- \pi^T x < 0$. Thus, $\bra{\pi^T x}^2 > \bra{\pi_1 - \pi^T x}^2$ and the result follows by noting that $\frac{1}{\norm{\pi}_2^2} \ge \frac{1}{\hat{\pi}^2}$. We have $S_0 \subsetneq K_0$ since $\bra{\bar{x}, \tbar} = \bra{{0_n}, 0} \in K_0$, but $\bra{\bar{x}, \tbar} \notin S_0$. To prove $S_0 \neq \emptyset$, one can check that for any $\bar{x} \in \Real^n$ and $\tbar = \Max \set{\norms{\xbar}, \frac{\pi_0 - \pi^T \bar{x}}{\hat{\pi}}}$, $\bra{\bar{x}, \tbar} \in S_0$. The proof for the third case $0 \in \bra{\pi_0,\pi_1}$ and $\hat{\pi} \ge \norm{\pi}_2$ is analogous and follows by noting that $S_0 = \emptyset$, $S_1 \subsetneq K_0$, and $S_1 \neq \emptyset$.
\qed
\end{proof}

\bibliographystyle{amsplain}
\bibliography{references}

\providecommand{\bysame}{\leavevmode\hbox to3em{\hrulefill}\thinspace}
\providecommand{\MR}{\relax\ifhmode\unskip\space\fi MR }
\providecommand{\MRhref}[2]{%
  \href{http://www.ams.org/mathscinet-getitem?mr=#1}{#2}
}
\providecommand{\href}[2]{#2}
\begin{thebibliography}{10}

\bibitem{DBLP:journals/mpc/Achterberg09}
T.~Achterberg, \emph{{SCIP: solving constraint integer programs}}, Mathematical
  Programmign Computation \textbf{1} (2009), 1--41.

\bibitem{DBLP:journals/mp/AndersenCL05}
K.~Andersen, G.~Cornu{\'e}jols, and Y.~Li, \emph{Split closure and intersection
  cuts}, Mathematical Programming \textbf{102} (2005), 457--493.

\bibitem{Kent}
K.~Andersen and {A. N.} Jensen, \emph{Intersection cuts for mixed integer conic
  quadratic sets}, 16th international IPCO Conference, Valparaiso (M.~Goemans
  and J.~Correa, eds.), Lecture Notes in Computer Science, Springer, 2013,
  pp.~37--48.

\bibitem{andersen2010analysis}
K.~Andersen, Q.~Louveaux, and R.~Weismantel, \emph{An analysis of mixed integer
  linear sets based on lattice point free convex sets}, Mathematics of
  Operations Research \textbf{35} (2010), 233--256.

\bibitem{anjos2011handbook}
M.~F. Anjos and J.~B. Lasserre (eds.), \emph{Handbook on semidefinite, conic
  and polynomial optimization}, International Series in Operations Research \&
  Management Science, vol. 166, Springer, 2012.

\bibitem{Atamturk07}
A.~Atamt\"{u}rk and V.~Narayanan, \emph{Cuts for conic mixed-integer
  programming}, IPCO (M.~Fischetti and D.~P. Williamson, eds.), LNCS, vol.
  4513, Springer, 2007, pp.~16--29.

\bibitem{AN:conicmir}
\bysame, \emph{Conic mixed-integer rounding cuts}, Mathematical Programming
  \textbf{122} (2010), 1--20.

\bibitem{balas1971intersection}
E.~Balas, \emph{Intersection cuts-a new type of cutting planes for integer
  programming}, Operations Research \textbf{19} (1971), 19--39.

\bibitem{genintersec}
E.~Balas and F.~Margot, \emph{Generalized intersection cuts and a new cut
  generating paradigm}, Mathematical Programming \textbf{137} (2013), 19--35.

\bibitem{Goez2012}
P.~Belotti, J.~C. G{\'o}ez, I.~P{\'o}lik, T.~K. Ralphs, and T.~Terlaky, \emph{A
  conic representation of the convex hull of disjunctive sets and conic cuts
  for integer second order cone optimization}, Optimization Online (2012).

\bibitem{belotti2013families}
Pietro Belotti, Julio~C G{\'o}ez, Imre P{\'o}lik, Ted~K Ralphs, and Tam{\'a}s
  Terlaky, \emph{On families of quadratic surfaces having fixed intersections
  with two hyperplanes}, Discrete Applied Mathematics \textbf{161} (2013),
  no.~16, 2778--2793.

\bibitem{dan}
D.~Bienstock and A.~Michalka, \emph{Strong formulations for convex functions
  over nonconvex sets}, Optimization Online (2011).

\bibitem{billionnet2012extending}
A.~Billionnet, S.~Elloumi, and A.~Lambert, \emph{{Extending the QCR method to
  general mixed-integer programs}}, Mathematical programming \textbf{131}
  (2012), 381--401.

\bibitem{billionnet2009improving}
A.~Billionnet, S.~Elloumi, and M.C. Plateau, \emph{{Improving the performance
  of standard solvers for quadratic 0-1 programs by a tight convex
  reformulation: The QCR method}}, Discrete Applied Mathematics \textbf{157}
  (2009), 1185--1197.

\bibitem{DBLP:journals/anor/BixbyR07}
R.~Bixby and E.~Rothberg, \emph{Progress in computational mixed integer
  programming - a look back from the other side of the tipping point}, Annals
  of Operations Research \textbf{149} (2007), 37--41.

\bibitem{bixby2004}
R.E. Bixby, M.~Fenelon, Z.~Gu, E.~Rothberg, and R.~Wunderling,
  \emph{Mixed-integer programming: a progress report}, The sharpest cut: the
  impact of Manfred Padberg and his work, SIAM, Philadelphia, PA, 2004,
  pp.~309--326.

\bibitem{lecnote}
G.~Blekherman, P.A. Parrilo, and R.~Thomas, \emph{Semidefinite optimization and
  convex algebraic geometry}, MPS-SIAM Series on Optimization, Society for
  Industrial and Applied Mathematics, 2013.

\bibitem{DBLP:conf/ipco/Bonami11}
P.~Bonami, \emph{Lift-and-project cuts for mixed integer convex programs}, in
  G{\"u}nl{\"u}k and Woeginger \cite{DBLP:conf/ipco/2011}, pp.~52--64.

\bibitem{DBLP:conf/ipco/BuchheimCL10}
C.~Buchheim, A.~Caprara, and A.~Lodi, \emph{An effective branch-and-bound
  algorithm for convex quadratic integer programming}, in Eisenbrand and
  Shepherd \cite{DBLP:conf/ipco/2010}, pp.~285--298.

\bibitem{springerlink:10.1007/s10107-011-0475-x}
C.~Buchheim, A.~Caprara, and A.~Lodi, \emph{An effective branch-and-bound
  algorithm for convex quadratic integer programming}, Mathematical Programming
  \textbf{135} (2012), 369--395.

\bibitem{DBLP:journals/mp/CezikI05}
M.~T. \c{C}ezik and G.~Iyengar, \emph{Cuts for mixed 0-1 conic programming},
  Mathematical Programming \textbf{104} (2005), 179--202.

\bibitem{Chvatal73}
V.~Chv\'atal, \emph{{Edmonds polytopes and a hierarchy of combinatorial
  problems}}, {Discrete Mathematics} \textbf{{4}} ({1973}), {305--337}.

\bibitem{conforti2010polyhedral}
M.~Conforti, G.~Cornu{\'e}jols, and G.~Zambelli, \emph{Polyhedral approaches to
  mixed integer linear programming}, 50 Years of Integer Programming 1958-2008
  (2010), 343--385.

\bibitem{conforti2011corner}
\bysame, \emph{Corner polyhedron and intersection cuts}, Surveys in Operations
  Research and Management Science \textbf{16} (2011), 105--120.

\bibitem{DBLP:journals/mp/CookKS90}
W.~J. Cook, R.~Kannan, and A.~Schrijver, \emph{Chv{\'a}tal closures for mixed
  integer programming problems}, Mathematical Programming \textbf{47} (1990),
  155--174.

\bibitem{cornuejols2008valid}
G.~Cornu{\'e}jols, \emph{Valid inequalities for mixed integer linear programs},
  Mathematical Programming \textbf{112} (2008), 3--44.

\bibitem{DBLP:journals/mor/DadushDV11}
D.~Dadush, S.~S. Dey, and J.~P. Vielma, \emph{The {Chv{\'a}tal-Gomory} closure
  of a strictly convex body}, Mathematics of Operations Research \textbf{36}
  (2011), 227--239.

\bibitem{DBLP:conf/ipco/DadushDV11}
\bysame, \emph{On the {Chv{\'a}tal-Gomory} closure of a compact convex set}, in
  G{\"u}nl{\"u}k and Woeginger \cite{DBLP:conf/ipco/2011}, pp.~130--142.

\bibitem{Dadush2011121}
\bysame, \emph{The split closure of a strictly convex body}, Operations
  Research Letters \textbf{39} (2011), 121 --126.

\bibitem{dash2012two}
S.~Dash, S.~S. Dey, and O.~G{\"u}nl{\"u}k, \emph{Two dimensional lattice-free
  cuts and asymmetric disjunctions for mixed-integer polyhedra}, Mathematical
  programming \textbf{135} (2012), 221--254.

\bibitem{dash2011note}
S.~Dash, O.~G{\"u}nl{\"u}k, and C.~Raack, \emph{{A note on the MIR closure and
  basic relaxations of polyhedra}}, Operations Research Letters \textbf{39}
  (2011), 198--199.

\bibitem{dgv11}
S.~Dash, O.~G{\"u}nl{\"u}k, and J.~P. Vielma, \emph{Computational experiments
  with cross and crooked cross cuts}, Optimization Online (2011).

\bibitem{del2012relaxations}
A.~Del~Pia and R.~Weismantel, \emph{Relaxations of mixed integer sets from
  lattice-free polyhedra}, 4OR: A Quarterly Journal of Operations Research
  \textbf{10} (2012), 1--24.

\bibitem{DBLP:conf/ipco/DeyV10}
S.~S. Dey and J.~P. Vielma, \emph{The {Chv{\'a}tal-Gomory} closure of an
  ellipsoid is a polyhedron}, in Eisenbrand and Shepherd
  \cite{DBLP:conf/ipco/2010}, pp.~327--340.

\bibitem{Drewes}
S.~Drewes, \emph{Mixed integer second order cone programming}, Ph.D. thesis,
  Technische Universit{\"a}t Darmstadt, 2009.

\bibitem{DBLP:conf/ipco/2010}
F.~Eisenbrand and F.~B. Shepherd (eds.), \emph{{Proceedings of the 14th IPCO
  Conference, Lausanne, Switzerland, 2010}}, LNCS, vol. 6080, Springer, 2010.

\bibitem{fujie1997semidefinite}
T.~Fujie and M.~Kojima, \emph{Semidefinite programming relaxation for nonconvex
  quadratic programs}, Journal of Global Optimization \textbf{10} (1997),
  367--380.

\bibitem{DBLP:conf/ipco/GiandomenicoLRS11}
M.~Giandomenico, A.~N. Letchford, F.~Rossi, and S.~Smriglio, \emph{A new
  approach to the stable set problem based on ellipsoids}, in G{\"u}nl{\"u}k
  and Woeginger \cite{DBLP:conf/ipco/2011}, pp.~223--234.

\bibitem{Gomory58}
R.~E. Gomory, \emph{Outline of an algorithm for integer solutions to linear
  programs}, Bulletin of the American Mathematical Society \textbf{64} (1958),
  275--278.

\bibitem{Gomory1969451}
R.~E. Gomory, \emph{Some polyhedra related to combinatorial problems}, Linear
  Algebra and its Applications \textbf{2} (1969), 451 -- 558.

\bibitem{springerlink:10.1007/BF01584976}
R.~E. Gomory and E.~L. Johnson, \emph{Some continuous functions related to
  corner polyhedra}, Mathematical Programming \textbf{3} (1972), 23--85.

\bibitem{Gouveia12}
J.~Gouveia and R.~Thomas, \emph{Convex hulls of algebraic sets}, in Anjos and
  Lasserre \cite{anjos2011handbook}, pp.~113--138.

\bibitem{DBLP:conf/ipco/2011}
O.~G{\"u}nl{\"u}k and G.~J. Woeginger (eds.), \emph{{Proceedings of the 15th
  IPCO Conference, New York, NY, 2011}}, LNCS, vol. 6655, Springer, 2011.

\bibitem{Helton12}
J.~W. Helton and J.~Nie, \emph{Semidefinite representation of convex sets and
  convex hulls}, in Anjos and Lasserre \cite{anjos2011handbook}, pp.~77--112.

\bibitem{henrion2011semidefinite}
D.~Henrion, \emph{Semidefinite representation of convex hulls of rational
  varieties}, Acta applicandae mathematicae \textbf{115} (2011), 319--327.

\bibitem{horst2003global}
R.~Horst and H.~Tuy, \emph{Global optimization: Deterministic approaches},
  Springer, 2003.

\bibitem{DBLP:journals/informs/JohnsonNS00}
E.~L. Johnson, G.~L. Nemhauser, and M.~W.~P. Savelsbergh, \emph{Progress in
  linear programming-based algorithms for integer programming: An exposition},
  INFORMS Journal on Computing \textbf{12} (2000), 2--23.

\bibitem{mustafa}
M.~R. {K\i l\i n\c{c}}, J.~Linderoth, and J.~Luedtke, \emph{Effective
  separation of disjunctive cuts for convex mixed integer nonlinear programs},
  Optimization Online (2010).

\bibitem{poster}
M.~R. {K\i l\i n\c{c}}, S.~Modaresi, and J.~P. Vielma, \emph{Split cuts for
  conic programming}, 9th Mixed Integer Programming Workshop (MIP 2012), July
  16--19, 2012, Davis, CA, Poster., 2012.

\bibitem{kojima2000cones}
M.~Kojima and L.~Tun{\c{c}}el, \emph{Cones of matrices and successive convex
  relaxations of nonconvex sets}, SIAM Journal on Optimization \textbf{10}
  (2000), 750--778.

\bibitem{lasserre2001global}
J.B. Lasserre, \emph{Global optimization with polynomials and the problem of
  moments}, SIAM Journal on Optimization \textbf{11} (2001), 796--817.

\bibitem{li2008cook}
Y.~Li and J.-P.~P. Richard, \emph{{Cook, Kannan and Schrijver’s example
  revisited}}, Discrete Optimization \textbf{5} (2008), 724--734.

\bibitem{Lodi2009}
A.~Lodi, \emph{Mixed integer programming computation}, Springer-Verlag, New
  York, 2010, pp.~619--645.

\bibitem{lovasz1989geometry}
L.~Lov{\'a}sz, \emph{Geometry of numbers and integer programming}, Mathematical
  Programming: Recent Developments and Applications (M.~Iri and K.~Tanabe,
  eds.), Kluwer, 1989, pp.~177--210.

\bibitem{Marchand}
H.~Marchand and L.A. Wolsey, \emph{{Aggregation and Mixed Integer Rounding to
  solve MIPs}}, Operations Research \textbf{49} (2001), 363--371.

\bibitem{MGbook}
D.~Micciancio and S.~Goldwasser, \emph{Complexity of lattice problems: a
  cryptographic perspective}, The Kluwer International Series in Engineering
  and Computer Science, vol. 671, Kluwer, 2002.

\bibitem{orlpaper}
S.~Modaresi, M.~R. {K\i l\i n\c{c}}, and J.~P. Vielma, \emph{Split cuts and
  extended formulations for mixed integer conic quadratic programming}, Tech.
  report, MIT, 2014.

\bibitem{DBLP:books/daglib/0090563}
G.~L. Nemhauser and L.~A. Wolsey, \emph{Integer and combinatorial
  optimization}, Wiley, 1988.

\bibitem{springerlink:10.1007/BF01585752}
G.~L. Nemhauser and L.~A. Wolsey, \emph{{A recursive procedure to generate all
  cuts for 0-1 mixed integer programs}}, Mathematical Programming \textbf{46}
  (1990), 379--390.

\bibitem{nesterov}
Y.~Nesterov, H.~Wolkowicz, and Y.~Ye, \emph{{Nonconvex Quadratic
  Optimization}}, Handbook of Semidefinite Programming (R.~Saigal,
  L.~Vandenberghe, and H.~Wolkowicz, eds.), Kluwer Academic Publishers, 2000,
  pp.~361 -- 420.

\bibitem{oustry2001sdp}
C.L.F. Oustry, \emph{{SDP relaxations in combinatorial optimization from a
  Lagrangian viewpoint}}, Advances in Convex Analysis and Global Optimization:
  Honoring the Memory of C. Caratheodory (1873-1950) \textbf{54} (2001),
  119--134.

\bibitem{parrilo2003semidefinite}
P.~A. Parrilo, \emph{Semidefinite programming relaxations for semialgebraic
  problems}, Mathematical Programming \textbf{96} (2003), no.~2, 293--320.

\bibitem{polik2007survey}
I.~P{\'o}lik and T.~Terlaky, \emph{{A survey of the S-lemma}}, SIAM review
  \textbf{49} (2007), 371--418.

\bibitem{poljak1995recipe}
S.~Poljak, F.~Rendl, and H.~Wolkowicz, \emph{A recipe for semidefinite
  relaxation for (0, 1)-quadratic programming}, Journal of Global Optimization
  \textbf{7} (1995), 51--73.

\bibitem{ranestad2011convex}
K.~Ranestad and B.~Sturmfels, \emph{The convex hull of a variety}, Notions of
  Positivity and the Geometry of Polynomials (2011), 331--344.

\bibitem{ranestad2009convex}
\bysame, \emph{On the convex hull of a space curve}, Advances in Geometry
  \textbf{12} (2012), 157--178.

\bibitem{sanyal2011orbitopes}
R.~Sanyal, F.~Sottile, and B.~Sturmfels, \emph{Orbitopes}, Mathematika
  \textbf{57} (2011), 275--314.

\bibitem{Scheiderer20112606}
C.~Scheiderer, \emph{Convex hulls of curves of genus one}, Advances in
  Mathematics \textbf{228} (2011), 2606 -- 2622.

\bibitem{sherali1998reformulation}
H.D. Sherali and W.P. Adams, \emph{A reformulation-linearization technique for
  solving discrete and continuous nonconvex problems}, vol.~31, Springer, 1998.

\bibitem{springerlink:10.1007/s101070050103}
R.~A. Stubbs and S.~Mehrotra, \emph{A branch-and-cut method for 0-1 mixed
  convex programming}, Mathematical Programming \textbf{86} (1999), 515--532.

\bibitem{tawarmalani2002convexification}
M.~Tawarmalani and N.V. Sahinidis, \emph{Convexification and global
  optimization in continuous and mixed-integer nonlinear programming: theory,
  algorithms, software, and applications}, vol.~65, Springer, 2002.

\bibitem{DBLP:journals/orl/Vielma07}
J.~P. Vielma, \emph{A constructive characterization of the split closure of a
  mixed integer linear program}, Operations Research Letters \textbf{35}
  (2007), 29--35.

\bibitem{Wolsey}
L.~A. Wolsey, \emph{Integer programming}, Wiley, 1998.

\bibitem{yildiran2009convex}
U.~Y{\i}ld{\i}ran, \emph{{Convex hull of two quadratic constraints is an LMI
  set}}, IMA Journal of Mathematical Control and Information \textbf{26}
  (2009), 417--450.

\bibitem{Kose}
U.~Y{\i}ld{\i}ran and I.~E. Kose, \emph{{LMI representations of the convex
  hulls of quadratic basic semialgebraic sets.}}, Journal of Convex Analysis
  \textbf{17} (2010), 535--551.

\end{thebibliography}

\end{document}